\theoremstyle{remark}
\newtheorem{remark}{Remark}[section]
\theoremstyle{plain}
\newtheorem{theorem}{Theorem}[section]
\newtheorem{lemma}[theorem]{Lemma}
\newtheorem{proposition}[theorem]{Proposition}
\newtheorem{definition}[theorem]{Definition}
\newtheorem{corollary}[theorem]{Corollary}
\theoremstyle{definition}
\newtheorem{example}[theorem]{Example}
\newcommand{\Z}{\mathbb{Z}}
\newcommand{\R}{\mathbb{R}}
\newcommand{\homol}[2]{\Z_{(#1)}^{#2}}
\newcommand\restr[2]{{
  \left.\kern-\nulldelimiterspace 
  #1 
  \vphantom{\big|} 
  \right|_{#2} 
  }}
\newcommand\cincludegraphics[2][]{\raisebox{-0.3\height}{\includegraphics[#1]{#2}}}
\author{Daniele Celoria}
\author{Naya Yerolemou}
\begin{document}

\title[A discrete Morse perspective on knot projections]{A discrete Morse perspective on knot projections and a generalised clock theorem}

\begin{abstract}
We obtain a simple and complete characterisation of which matchings on the Tait graph of a knot diagram induce a discrete Morse function (dMf) on $S^2$, extending a construction due to Cohen. We show these dMfs are in bijection with certain rooted spanning forests in the Tait graph. We use this to count the number of such dMfs with a closed formula involving the graph Laplacian.
We then simultaneously generalise Kauffman's Clock Theorem and Kenyon-Propp-Wilson's correspondence in two different directions; we first  prove that the image of the correspondence induces a bijection on perfect dMfs, then we show that all perfect matchings, subject to an admissibility condition, are related by a finite sequence of click and clock moves. 
Finally, we study and compare the matching and discrete Morse complexes associated to the Tait graph, in terms of partial Kauffman states, and provide some computations.

\end{abstract}

\maketitle

\section{Introduction}

Given a graph $G$ embedded in the $2$-sphere,  denote by $G^*$ its plane dual, and by $\Gamma(G)$ the plane graph obtained by overlaying the two graphs. The vertices of $\Gamma(G)$ are divided into those coming from the vertices of the two original graphs and those arising as the intersection of dual edges.
Fix a pair of vertices $v$ and $f$, one in $G$ and one in $G^*$. A celebrated result by Kenyon, Propp and Wilson \cite[Theorem 1]{kenyon1999trees}, known as the KPW correspondence, provides a map between spanning trees in $G$ and perfect matchings on the graph obtained from $\Gamma(G)$ by removing $v$ and $f$. This map is shown to be a bijection if $v$ and $f$ are ``adjacent'', meaning that they are opposite vertices in a square face of $\Gamma(G)$. Otherwise, the map is only injective. 
\bigskip

With a somewhat different point of view, in his \emph{Formal Knot Theory}~\cite{kauffman2006formal} Kauffman introduced what are now know as \emph{Kauffman states}; these are a special kind of bijections between the set of crossings of a knot diagram and its regions, after two adjacent ``forbidden'' regions are excluded. In his famous Clock Theorem, he then proved that all Kauffman states are related by a finite sequence of simple swaps, known as \emph{clock moves}.
Every knot diagram $D$ can be chequerboard coloured, thus producing two dual graphs, having as vertices the white (respectively black) regions, and crossings as edges. 

Loosely speaking, Kauffman states are in bijection with spanning trees in the black graph, as well as with perfect matchings in the (Tait) graph $\Gamma(D)$ given by overlaying the two coloured graphs (see \emph{e.g.}~\cite{cohen2010twisted}). In particular, the KPW correspondence reduces to Kauffman's bijection for the black graphs with the forbidden regions as adjacent ones.
\bigskip

More recently, Cohen \cite{cohen2012correspondence} showed how to associate a discrete Morse function --as defined by Forman \cite{forman1998morse}-- to a Kauffman state on a given knot diagram. More precisely, rather than an actual discrete Morse function, his result yields an equivalence class of such objects, which can be thought of as being perfect matchings on the balanced Tait graph (see Section \ref{sec:diagrams} for the definition) of the diagram. 
One of the aims of this paper is to extend Cohen's work bridging these graph theoretic properties and ideas stemming from knot theory. We tried to keep the exposition as self contained as possible, as well as accessible to people with graph or knot theoretic backgrounds.
\bigskip

Our starting point is to completely characterise the set of possible dMfs arising from a generalised version of Cohen's construction (Theorem~\ref{thm:dmfandstates}). This will allow us (Theorem~\ref{thm:dmfforest}) to prove the existence of a bijection between dMfs arising this way and rooted spanning  orthogonal forests in the black and white graphs of the diagram, induced by a generalised version of Kauffman states, called \emph{partial Kauffman states}. 

We use this bijection to first count perfect dMfs in Proposition~\ref{prop:countperfect}, then --using a symbolic version of the graph Laplacian for the two coloured graphs-- we give a formula (Proposition~\ref{prop:countdmfs}) to count all dMfs, and compute them for a simple infinite family of diagrams in terms of Fibonacci numbers.
\bigskip

We then turn to perfect admissible matchings on $\Gamma(D)$, where admissible just means that exactly one vertex of each colour is unmatched.

We introduce a set of two moves, called click path and click loop moves; we first prove (Theorem \ref{prop:clickpathmove}) that click path moves induce bijections between the sets of perfect dMfs on $S^2$ with different critical points.
This implies immediately that the image of the KPW correspondence only consists of perfect dMfs, regardless of the adjacency condition; furthermore (Corollary \ref{cor:alldmfsfromKPW}) every perfect dMf arises uniquely as the image of precisely one choice of spanning tree and one vertex of each colour.

We then prove a topological characterisation (Theorem \ref{prop:whichcomponents})
of the subgraphs induced by perfect admissible matchings on the black and white graph, as well as a combinatorial one
(Theorem \ref{prop:admissibleodd}) in terms of Jordan resolutions. 

With these results at hand we can state our main result, which is a simultaneous further generalisation of the Clock theorem and KPW's correspondence:
\begin{theorem}[Click-Clock]\label{thm:clickclockintro}
If the knot diagram $D$ is reduced, any two perfect admissible matchings on $\Gamma(D)$ are related by a finite sequence of click path, click loop and clock moves.
\end{theorem}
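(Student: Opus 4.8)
The plan is to connect any two perfect admissible matchings $M$ and $M'$ on $\Gamma(D)$ in two stages: first make their pairs of unmatched vertices agree, using click path and click loop moves, and then connect them by clock moves while keeping that pair fixed — exhibiting the statement as the common refinement of the KPW picture (which varies the unmatched pair) and of Kauffman's Clock Theorem (which holds it fixed). Before either stage, one must handle the fact that a perfect admissible matching need not be a perfect dMf: by the characterisation around Theorem~\ref{thm:dmfandstates}, the subgraph it induces on the black or the white graph may support a closed $V$-path. So the first step is a reduction lemma: if $M$ is a perfect admissible matching that is not a dMf, then some clock move strictly decreases a suitable complexity — for instance the number of critical $1$-cells, or the total length of the closed $V$-paths — so that after finitely many clock moves, none of which alters the unmatched pair, one reaches a perfect dMf. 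Reducedness of $D$ is used already here: a nugatory crossing produces a cut vertex in one of the coloured graphs and obstructs exactly the clock move needed to break such a cycle.

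Granting this, I would work entirely with perfect dMfs. By Corollary~\ref{cor:alldmfsfromKPW} every perfect dMf is the KPW image of a unique triple — a spanning tree of the black graph together with one black and one white vertex — and by Theorem~\ref{prop:clickpathmove} a click path move is a bijection between perfect dMfs with different critical points, realised by sliding the distinguished vertices along paths in the coloured graphs, with the click loop moves supplying the remaining such bijections. Hence finitely many click path and click loop moves carry any perfect dMf to one with an arbitrarily prescribed admissible unmatched pair $(v,f)$. Applied to the dMf reductions of $M$ and of $M'$, this reduces the theorem to a generalised Clock Theorem: \emph{any two perfect admissible matchings on $\Gamma(D)$ with the same unmatched pair $(v,f)$ are related by clock moves.}

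For that final step, when $(v,f)$ is \emph{adjacent} — opposite corners of a square face of $\Gamma(D)$ — this is Kauffman's Clock Theorem transported to the Tait graph. For a general pair I would either induct on the combinatorial distance between $v$ and $f$, fixing a square face $Q$ incident to $v$ and constructing a local bijection between matchings with unmatched pair $(v,f)$ and those with unmatched pair $(v',f)$, for $v'$ the vertex of the black graph opposite $v$ across $Q$, then checking that this bijection transports clock-move connectivity; or, more robustly, imitate Kauffman's original argument, using the Jordan-resolution description of Theorem~\ref{prop:admissibleodd} to equip the set of perfect admissible matchings with unmatched pair $(v,f)$ with a partial order whose covering relations are clock moves, and invoking connectedness of its Hasse diagram. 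Reducedness reappears here to guarantee that this poset is connected, ruling out the split-diagram degeneracies that would disconnect the state space. The main obstacle I anticipate is precisely this non-adjacent case: Kauffman's proof genuinely exploits the square-face hypothesis, so one must either find a single partial order working uniformly over all forbidden pairs — the Jordan-resolution viewpoint of Theorem~\ref{prop:admissibleodd} being the natural candidate — or control how clock-move connectivity behaves under the ``slide the forbidden vertex across a square'' bijections. A secondary, more technical obstacle is a compatibility lemma ensuring that the click moves used in the second stage commute appropriately with the cycle-removing clock moves of the first, so that the two stages can be carried out independently.
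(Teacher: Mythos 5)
Your overall architecture matches the paper's: reduce an arbitrary perfect admissible matching to a perfect dMf, use click path (and loop) moves to prescribe the critical pair, and finish with Kauffman's Clock Theorem. (Incidentally, your worry about a ``non-adjacent Clock Theorem'' in the last stage is unnecessary: since Proposition~\ref{prop:clickpathmove} lets you prescribe the unmatched pair arbitrarily, you may simply choose it adjacent, where Proposition~\ref{prop:perfectdMf} guarantees every perfect matching is a dMf and Kauffman's theorem applies verbatim.)

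The genuine gap is your first ``reduction lemma'', which is precisely the hard core of the theorem and is asserted rather than proved. Your proposed mechanism does not work as stated: for a \emph{perfect} matching every crossing is matched, so there are no critical $1$-cells to count, and the relevant complexity is the number of Jordan cycles (equivalently, of monochromatic loops). By Lemma~\ref{lem:changeby2} a clock move changes $|J(x)|$ by $0$ or $\pm 2$, and a component-reducing (Type III) clock move exists only in a very special local configuration: two monochromatic loops of opposite colours meeting along opposite edges of a square of $\Gamma(D)$, possibly after a click loop move to reverse one of them. In general no single complexity-decreasing clock move is available, so a greedy monovariant argument stalls; one must first perform a long sequence of neutral moves to create the Type III configuration. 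This is exactly what the paper's Lemma~\ref{lem:connectedjordan} does, and it requires the structural results you do not invoke: the concentric pseudoforest decomposition of Propositions~\ref{prop:whichcomponents} and~\ref{prop:admissibleodd}, the realisation of leaf spins by clock (and occasionally click path) moves in Lemma~\ref{lem:spinandclock}, and the connectivity of the almost-tree spin graph (Proposition~\ref{prop:connectedgraph}, via boundary contraction and the interpolation theorem of Harary--Mokken--Plantholt, which is where $2$-connectedness from reducedness enters --- not through an obstructed single clock move at a nugatory crossing). Moreover your claim that the reduction can be done by clock moves alone, fixing the unmatched pair, is stronger than what the paper obtains: its reduction genuinely uses click loop moves (one may be needed immediately before each Type III clock move), and the paper explicitly leaves open whether click path moves can be avoided. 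Without a proof of this reduction step, the argument does not go through.
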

One easy consequence of this result is that two perfect dMfs induced by Cohen's construction can be transformed into one another by clock and click path moves – or only clock moves if they share the same critical points (this last part is the original Clock Theorem). 

The proof of the Click-Clock theorem is surprisingly not straightforward, and relies on several  seemingly unrelated graph-theoretic constructions. It is also noteworthy to point out that this generalisation of the Clock Theorem is independent from others that have appeared in the literature, such as Hine and K{\'a}lm{\'a}n's  \cite{hine2018clock} version for triangulated surfaces, and Zibrowius' extension to tangles  \cite{zibrowius2016heegaard}. 
\bigskip

Finally we introduce two simplicial complexes associated to the set of partial Kauffman states, the \emph{matching} and \emph{discrete Morse} complexes. We study some of the properties of these mysterious complexes, and provide some sample computer aided computations of their homologies.

It remains an open question how to extract more information from these complexes associated to $\Gamma(D)$ and whether this information can be related to interesting features of the knot.

\subsection*{Acknowledgements} DC is supported by the European Research Council (ERC) under the European Unions Horizon 2020 research and innovation programme
(grant agreement No 674978). NY is supported by The Alan Turing Institute under the EPSRC grant EP/N510129/1. The authors would like to thank Vidit Nanda and Agnese Barbensi for their useful feedback and support, and the anonymous referee for their helpful comments.

\section{Knot diagrams and Kauffman states}\label{sec:diagrams}

Consider a knot $K \subset S^3$ and a diagram $D$ for $K$; as this will not create confusion we will also denote by $D$ the projection of the diagram, \emph{i.e.}~the $4$-valent graph\footnote{To avoid degenerate cases, we will always assume that the diagrams have at least one crossing.} on $S^2$ obtained by disregarding the over\slash under information at all of the crossings. Choose an arc $a$ on $D$ --that is, an edge in the projection-- and call both $a$ and the two regions of $S^2 \setminus D$ that have $a$ as an edge \emph{forbidden}. The pair $(D,a)$ is usually called a \emph{marked diagram} for $K$.

\begin{definition}[\cite{kauffman2006formal}]
A \emph{Kauffman state} on $(D,a)$ is a bijection between the crossings of $D$ and the non-forbidden regions of $S^2 \setminus D$, such that each crossing $c$ is assigned to one of the (at most $4$) non-forbidden regions that are incident to $c$.
\end{definition}

A common way of representing a Kauffman state on $(D,a)$ is to mark the edge $a$, and specify the assigned region to a given crossing by placing a dot near it.
\begin{figure}[ht]
  \centering
\includegraphics[width=8cm]{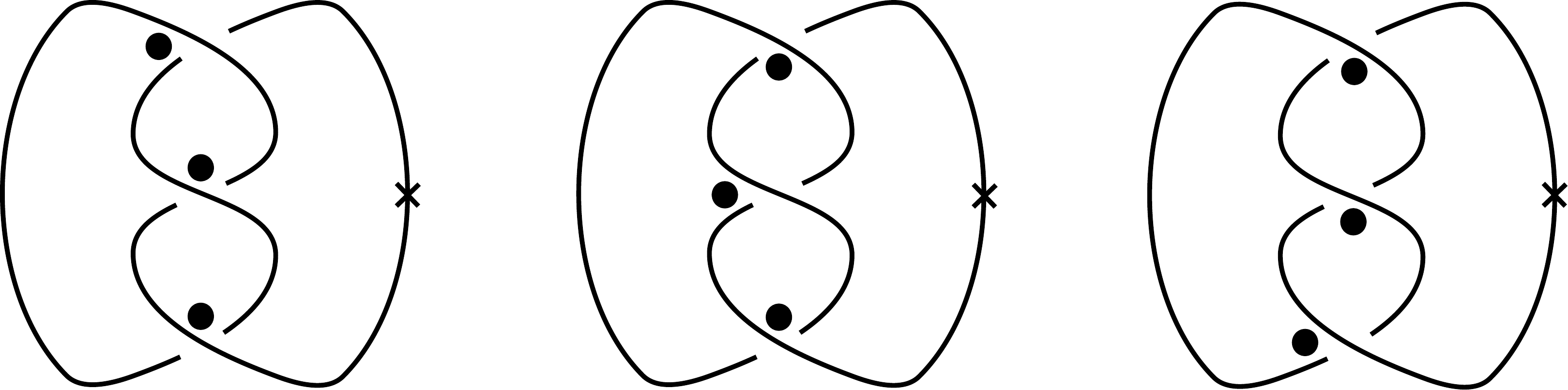}
\caption{The three Kauffman states on a minimal marked diagram $(D,a)$ of the left trefoil. The forbidden edge $a$ is marked with an asterisk.}
\label{fig:PKStrefoil}
\end{figure}

It was proved by Kauffman in~\cite{kauffman2006formal} that every pair of Kauffman states on $(D,a)$ is related by a finite sequence of \emph{clock moves}, shown in Figure~\ref{fig:clockmove}.

\begin{figure}[ht]
  \centering
\includegraphics[width=5cm]{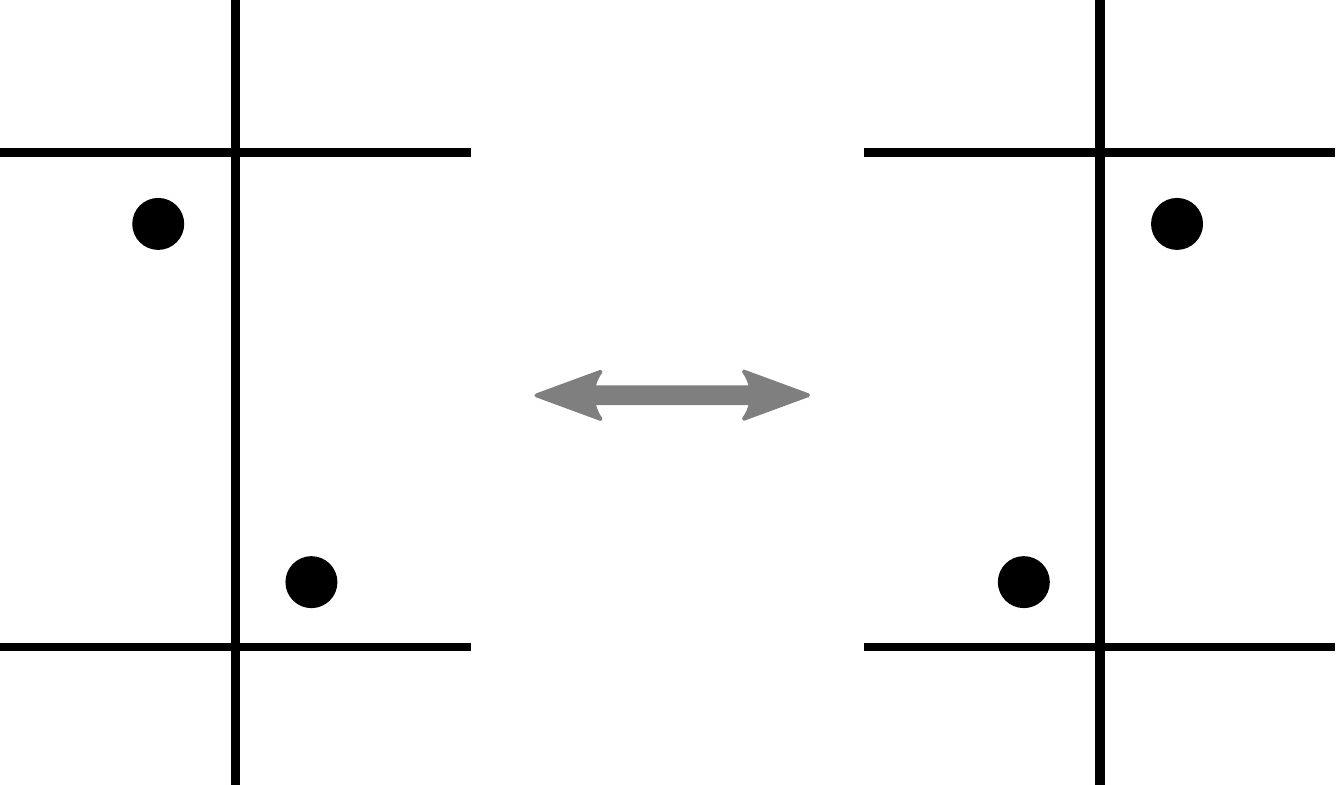}
\caption{A clock move seen on a local portion of a diagram. The right configuration is obtained from the left one by a clockwise clock move. The two Kauffman states are assumed to coincide everywhere except on the two crossings involved in the clock move.}
\label{fig:clockmove}
\end{figure}

So, given a marked diagram $(D,a)$, one can construct the \emph{clock graph} whose vertices are the Kauffman states $X(D,a)$, and edges are given by clock moves (see Figure~\ref{fig:tait}). Clock graphs have been studied previously in~\cite{abe2011clock}, \cite{cohen2014kauffman} and \cite{cohen2010twisted}, for example. The next objects we introduce have been previously studied in \cite{huggett_moffatt_virdee_2012} and \cite{cohen2010twisted}, for example.

\begin{figure}[ht]
  \centering
\includegraphics[width=8cm]{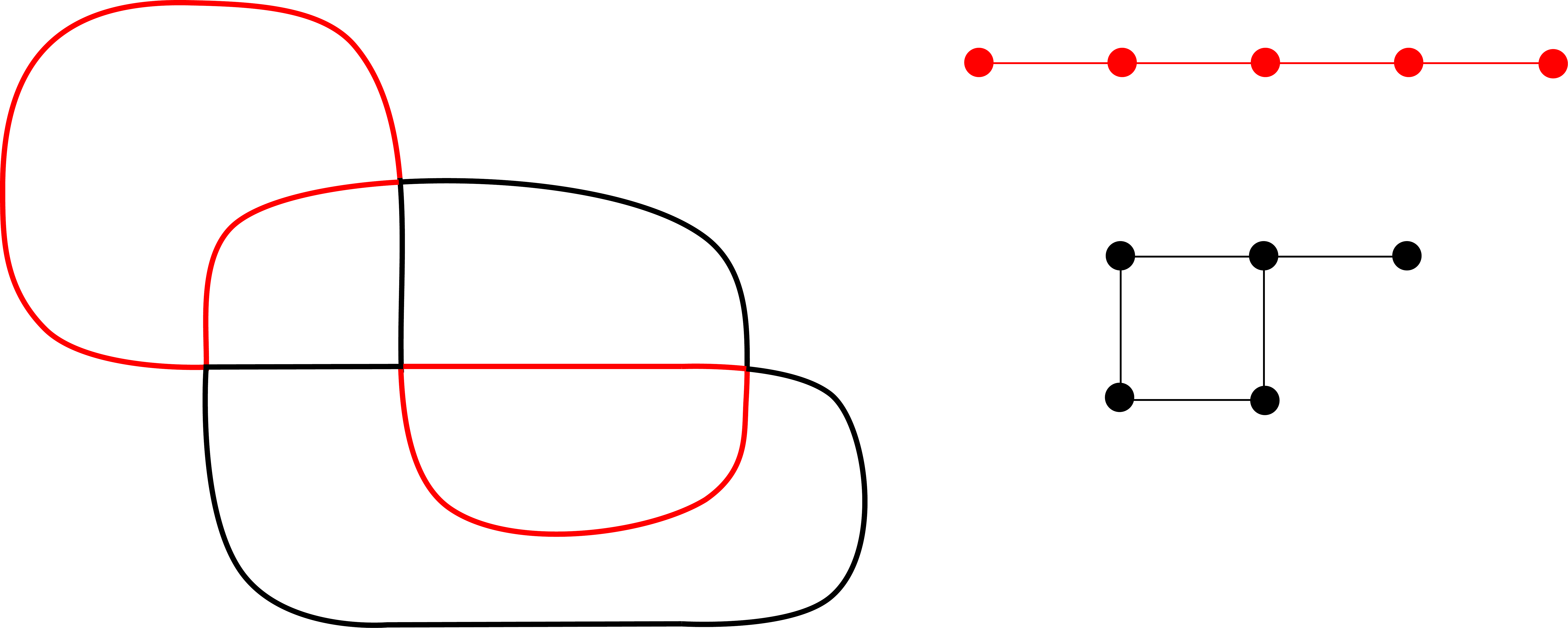}
\caption{The projection of a minimal diagram of the figure 8 knot. Choosing any of the the red arcs as forbidden results in the red clock graph on the right and similarly for any choice of black arc.}
\label{fig:clockgraphs}
\end{figure}

\begin{definition}
Let $D$ be a knot projection with a black and white chequerboard colouring \footnote{Note that this colouring always exists because $D$ is 4-valent.}. The black graph $G_b(D)$ has black regions as vertices and crossings as edges (the white graph $G_w(D)$ is defined analogously)\footnote{We will omit $D$ from the notation when it is clear to which diagram $G_b$ is associated.}. The \emph{overlaid Tait graph $\Gamma (D)$} is the superimposition of $G_b$ and $G_w$, using their natural embeddings in $S^2$.
\end{definition}

Note that the graphs $G_b$ and $G_w$ are plane duals, $\Gamma(D)$ divides the sphere into square regions, and 
$$V(\Gamma (D)) = [V(G_w) \cup V(G_b)] \cup [E(G_w) \cap E(G_b)],$$ and edges in $\Gamma (D)$ are the half edges of the white and black graphs, connecting one of their vertices to a crossing (see~\emph{e.g.}~\cite{kauffman2006formal} for a more detailed exposition). 

If one discards the forbidden vertices (one from $G_w$ and one from $G_b$), we obtain the  \emph{balanced overlaid Tait graph} $\Gamma (D,a)$. The vertices of this tri-partite graph can be split into those coming from black or white regions, and those corresponding to crossings. 

In the figures, we will draw black\slash white regions as black\slash white dots respectively, and vertices coming from crossings as little crosses, as in Figure~\ref{fig:tait}.

\begin{figure}[ht]
  \centering
\includegraphics[width=9cm]{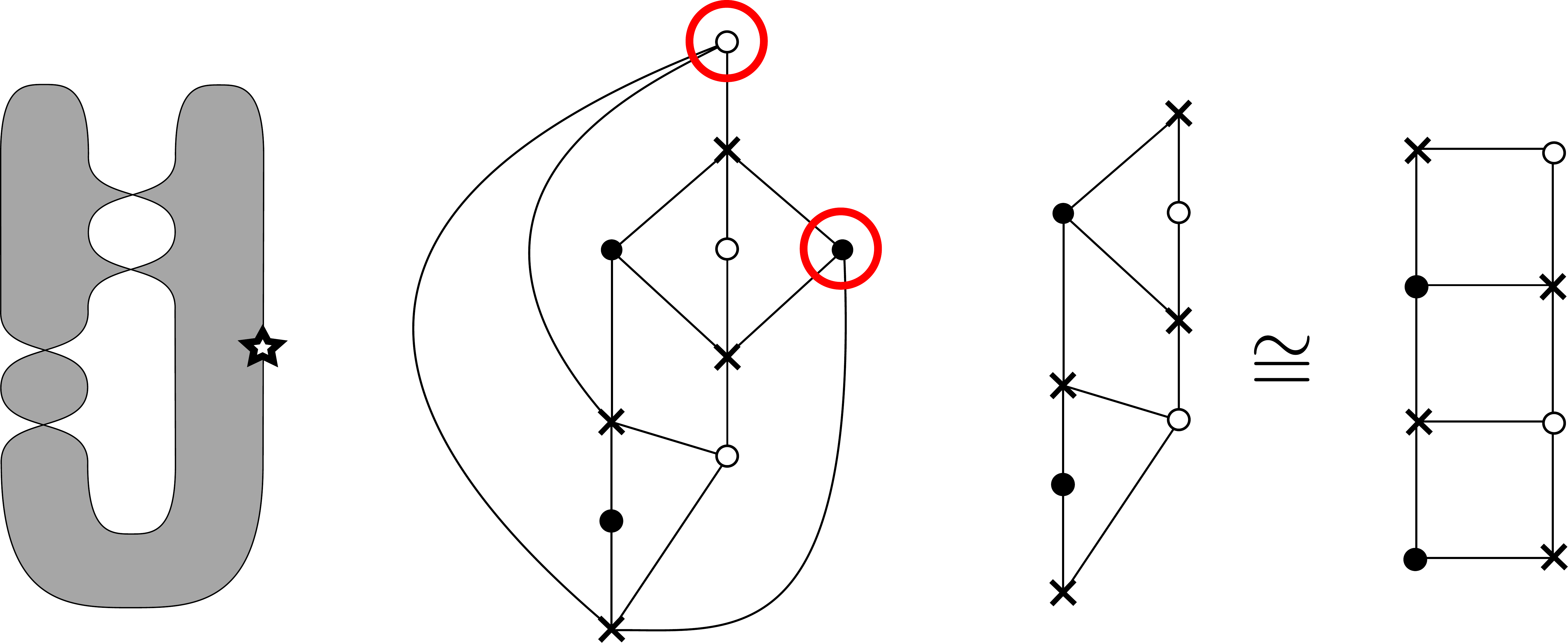}
\caption{From left to right: a chequerboard coloured projection for a diagram $D$ of the figure eight knot, the associated overlaid (unbalanced) Tait graph $\Gamma(D)$, and the balanced one $\Gamma(D,a)$ given by the choice of the arc $a$ marked with an asterisk. Forbidden regions are circled in red.}
\label{fig:tait}
\end{figure}

\begin{definition}[\cite{kauffman2006formal}]
The \emph{Jordan trail} associated to a Kauffman state $x\in X(D,a)$ is the curve formed from $x$ by resolving all crossings as indicated in Figure~\ref{fig:singleresolvecrossing}.
\end{definition}

\begin{figure}[ht]
  \centering
\includegraphics[width=4cm]{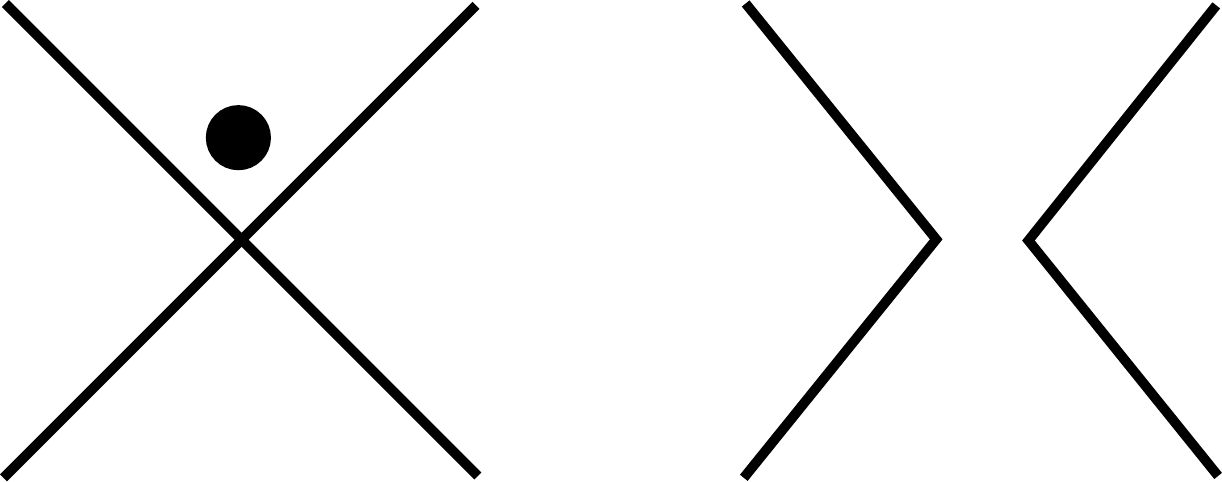}
\caption{How to resolve a crossing involved in a Kauffman state.}
\label{fig:singleresolvecrossing}
\end{figure}

We will see later on that Jordan trails associated to Kauffman states are always connected (see also Figure~\ref{fig:treekauffstate}). We will expand this definition in Section~\ref{sec:knottodmf} to include the cases where not all crossings are paired with a region (as in Figure~\ref{fig:partialresolvecrossing}). 
\bigskip

We recall here some graph-theoretic terminology that will be widely used throughout the rest of the paper. 
\begin{definition}
A \emph{matching} on a graph $G$ is a subset of edges in $G$ sharing no common vertices. A matching is \emph{perfect} if every vertex in $G$ is incident to an edge in the matching and it is \emph{maximal} if it is not a proper subset of another matching. A \emph{maximum matching} is a matching containing the largest number of edges. 
\end{definition}

We write $Tree(G)$ to denote the set of all spanning trees in $G$. Let $H$ be a subgraph of $G$, with  $E(G)=\{e_1,\ldots,e_m\}$ and $E(H)=\{e_i\}_{i \in I}$ for some $I \subseteq \{1,\ldots, m\}$. 
A graph $H^\perp$ in the dual graph $G^\ast$ \emph{orthogonal to $H$} is any subgraph of $G^\ast$ with edges a subset of $\{e_i^\ast\}_{i \notin I}$, where $e_i^\ast$ is the unique edge in $G^\ast$ intersecting $e_i$ in $G\cup G^\ast$. It is a well-known result that the dual $H^\perp$ of a spanning tree $H$ is itself a spanning tree in the dual graph, and that $H^\perp$ is completely determined by $H$. A forest is just a disjoint union of trees, and we allow isolated vertices to be connected components of a forest.

By definition, the set of perfect matchings of $\Gamma (D,a)$ is in bijective correspondence with $X(D,a)$  (see \emph{e.g.}~Figure \ref{fig:kauffstatesarematchings}), hence we will use these notions interchangeably throughout.

\begin{figure}[ht]
  \centering
\includegraphics[width=6cm]{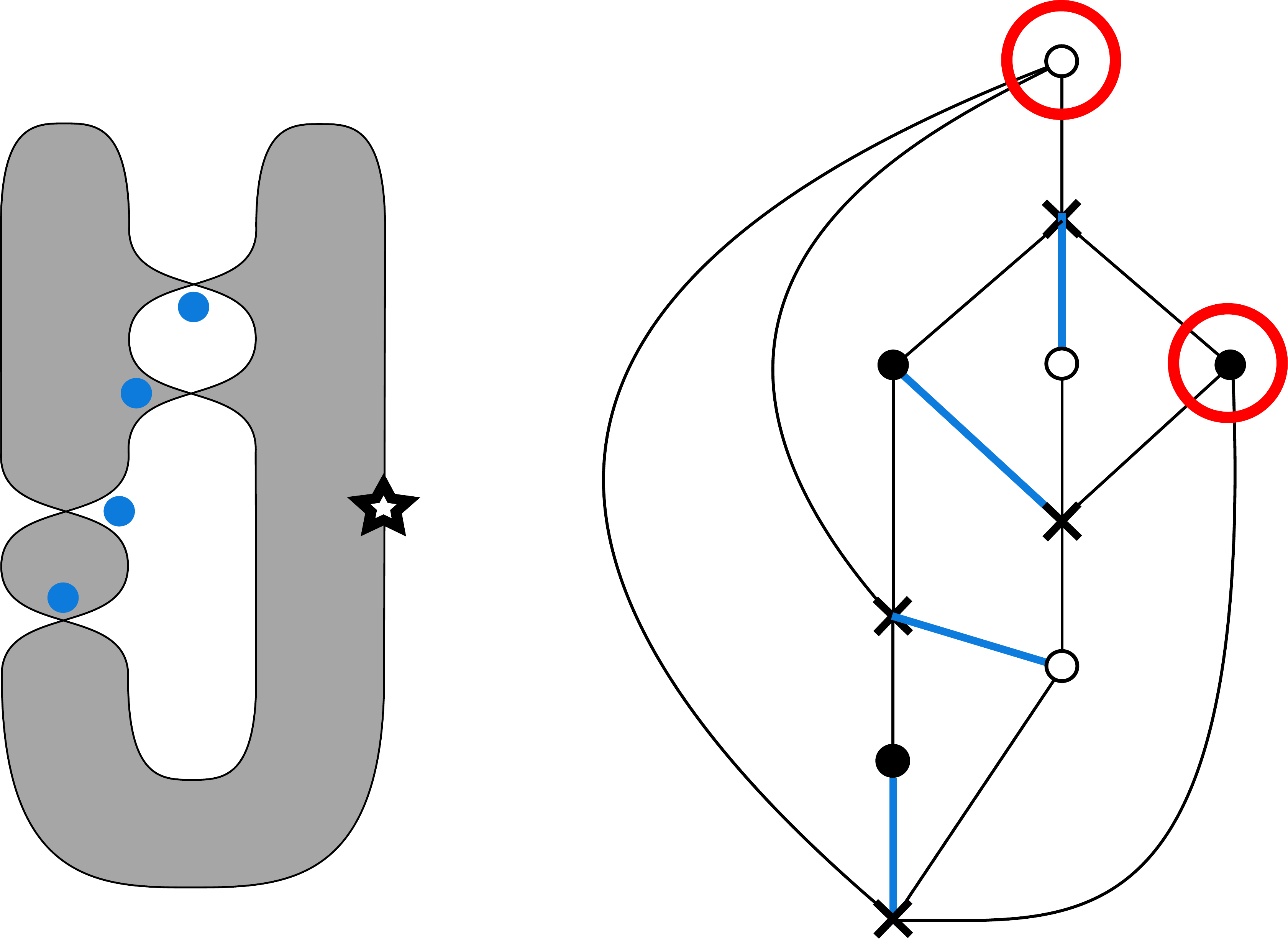}
\caption{On the left is a Kauffman state on a diagram for the figure eight knot and on the right is the corresponding perfect matching on the balanced Tait graph. The forbidden regions are circled in red.}
\label{fig:kauffstatesarematchings}
\end{figure}

There is also a bijection between Kauffman states on $(D,a)$ and rooted spanning trees in $G_b$ (or $G_w$), after choosing a common root for the trees and their duals (see \cite[Sec.~2]{kauffman2006formal}). The choice of these roots is dictated by the arc $a$: given a spanning tree $T \subset G_b$, take the unique vertex corresponding to a forbidden region as the root of the tree and orient all edges away from the root (see Figure \ref{fig:treekauffstate}). Let $T^*\subset G_w$ be the tree dual to $T$, and, again, orient the edges to flow away from the uniquely determined root. Each crossing lies at the centre of an oriented edge $u\to v$, so associate to each crossing the target $v$ of the edge to which it belongs. This will always be a non-forbidden region because the two forbidden regions are, by construction, only ever the source of an edge. There is a bijection between crossings and non-forbidden regions because $T$ and $T^*$ are dual and orthogonal, and they span all regions. In a similar way, one can construct a spanning tree from a Kauffman state. 

\begin{figure}[ht]
  \centering
\includegraphics[width=12cm]{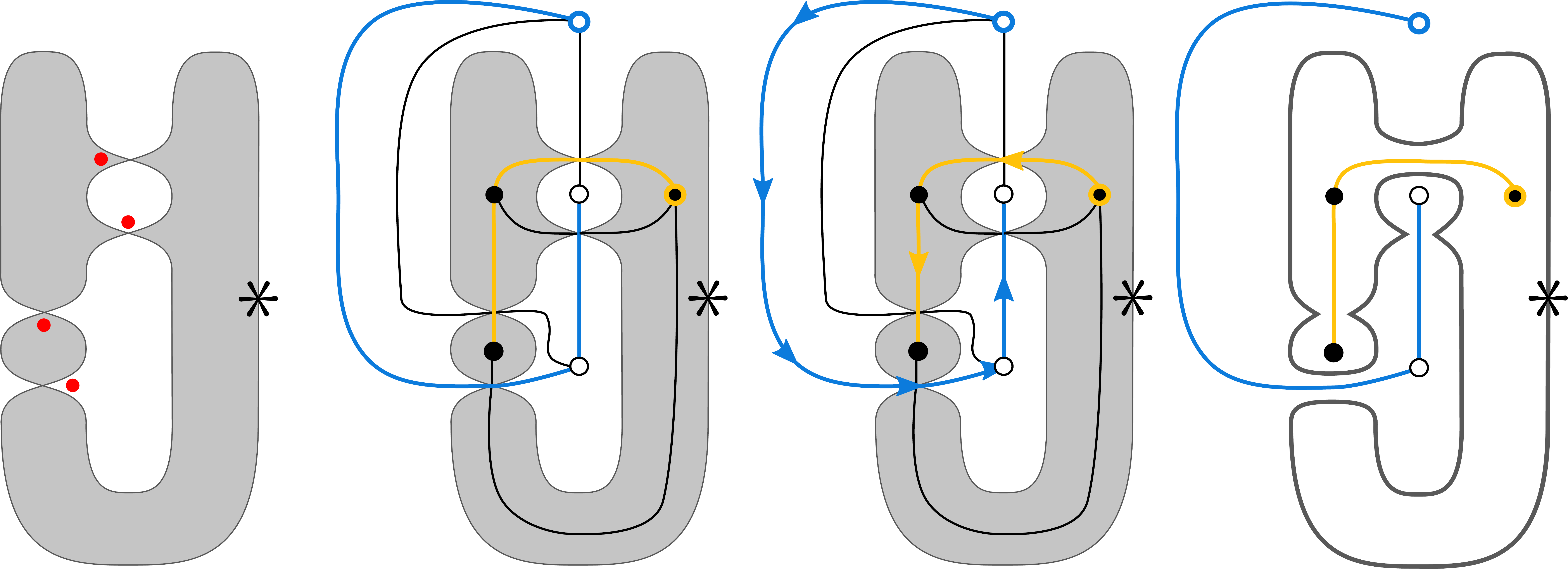}
\caption{Spanning trees determine Kauffman states: the Kauffman state on the left is completely determined by the oriented dual spanning trees in $G_b$ and $G_w$, rooted at the coloured nodes, and displayed in the two central parts of the figure. On the right, the Jordan resolution determined by the Kauffman state and the trees. Note that the Jordan resolution is a neighbourhood of both trees.}
\label{fig:treekauffstate}
\end{figure}

We can now state a version of Kauffman's  \emph{Clock Theorem} linking Kauffman states, appropriately rooted trees in the black or white graphs and Jordan trails, suited for our purposes.

\begin{theorem}{(Clock Theorem) \cite[Thm.~2.4]{kauffman2006formal}}\label{thm:jordantrails}
There is a bijection between $X(D,a)$, $Tree(G_b)$ and Jordan trails of $D$. Moreover, all elements of $X(D,a)$ can be transformed in one another with a finite sequence of clock moves.
\end{theorem}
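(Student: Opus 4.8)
The plan is to prove that the three sets biject by exhibiting explicit mutually inverse maps, reducing everything to $Tree(G_b)$, and then to establish clock-connectivity separately; the latter is the only step that requires real work.

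\textbf{The bijection $Tree(G_b) \leftrightarrow X(D,a)$.} First I would make precise the construction already sketched above. Given $T\in Tree(G_b)$, root it at the forbidden black vertex $b_0$ and orient its edges away from the root; let $T^\perp\subset G_w$ be the orthogonal dual spanning tree, rooted at the forbidden white vertex $w_0$ and likewise oriented away from its root. Every crossing of $D$ lies at the midpoint of a unique edge, belonging either to $T$ or (as a dual edge) to $T^\perp$; send the crossing to the head of that oriented edge, producing a state $\Phi(T)$. Two checks are needed: the head is always non-forbidden, since $b_0$ and $w_0$ are roots and so have no incoming edge; and $\Phi(T)$ is a bijection onto the non-forbidden regions, since in a tree whose edges point away from the root every non-root vertex is the head of exactly one edge. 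For the inverse, given $x\in X(D,a)$ declare a crossing \emph{black} if its dot lies in a black region and \emph{white} otherwise; I claim the black crossings span a tree $\Psi(x)\subseteq G_b$. It has $|V(G_b)|-1$ edges, using $|V(G_b)|+|V(G_w)|=n+2$ (Euler characteristic, $n$ the number of crossings) together with bijectivity of $x$, so it is enough to exclude a cycle; a cycle in $G_b$ bounds a disk in $S^2$, and counting crossings against regions strictly inside that disk contradicts that $x$ restricts there to a bijection. One then verifies $\Phi\circ\Psi=\mathrm{id}$ and $\Psi\circ\Phi=\mathrm{id}$ directly from the definitions.

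\textbf{The bijection $X(D,a) \leftrightarrow$ Jordan trails.} The assignment $x\mapsto J_x$ (resolve each crossing as in Figure~\ref{fig:singleresolvecrossing}) is well defined, and the content is that $J_x$ is a single simple closed curve. I would argue that amalgamating the closed black regions along exactly the crossings at which $x$ resolves ``towards black'', i.e.\ along the edges of $\Psi(x)$, produces a regular neighbourhood of the tree $\Psi(x)$, hence a disk; dually for white. These two disks cover $S^2$ and meet along $J_x$, so $J_x$ is connected, i.e.\ a Jordan trail (cf.\ Figure~\ref{fig:treekauffstate}). Injectivity of $x\mapsto J_x$ is then immediate: from $J_x$ one reads off the black crossings, hence $\Psi(x)$, which determines $x$. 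For surjectivity, a Jordan trail $J$ cuts $S^2$ into two disks; its black-merge crossings form a connected spanning subgraph of $G_b$, and since the analogous white count forces it to have exactly $|V(G_b)|-1$ edges it is a spanning tree, namely $\Psi(x)$ for a unique $x$ with $J_x=J$.

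\textbf{Clock-connectivity.} This is the main obstacle. It is convenient to rephrase a clock move in the matching picture on the bipartite planar graph $\Gamma(D,a)$: it is a ``square flip'', the choice of a square face whose two matched edges form an opposite pair, replaced by the complementary pair, the two senses giving clockwise and counterclockwise. I would then follow Kauffman's lattice strategy. Introduce the preorder in which $x\preceq y$ whenever $y$ is reachable from $x$ by clockwise clock moves, and prove: (i) there is a unique \emph{clocked} state $x_{\max}$ admitting no clockwise move and, symmetrically, a unique \emph{counterclocked} state $x_{\min}$; and (ii) every state is joined to $x_{\max}$ by a finite ascending chain. Statement (ii) decomposes into a no-infinite-ascent claim, handled by a bounded nonnegative integer potential $\mu(x)$ (a signed count of clockwise local configurations) that strictly increases at each clockwise move, and a progress claim: if $x\neq x_{\max}$ then some square is clockwise-flippable. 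Granting (i)--(ii), any two states connect via $x_{\max}$, which gives the theorem. The delicate point, and where I expect the real difficulty to lie, is the progress claim together with the construction of $\mu$: in height-function language one must show that the perfect matchings of a planar bipartite graph on $S^2$, normalised by the forbidden basepoint, carry a height function whose covering relations are exactly the square flips, so that a local maximum of the height always sits at a flippable face. Setting up the basepoint normalisation and proving this ``local max $\Rightarrow$ flippable'' implication is the crux.
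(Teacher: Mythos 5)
The paper does not actually prove this statement: it is quoted from Kauffman's \emph{Formal Knot Theory} \cite[Thm.~2.4]{kauffman2006formal}, and the surrounding text only sketches the correspondence between rooted spanning trees and Kauffman states, so there is no internal proof to compare yours against. Measured on its own terms, your two bijection steps are essentially correct and agree with that sketch: the map $\Phi$ rooted at the forbidden vertices, its inverse $\Psi$ via the black-dotted crossings, and the disk-neighbourhood argument showing $J_x$ is a single Jordan curve are standard and sound. The one point needing care is the no-cycle claim for $\Psi(x)$: the Euler-characteristic count (the same count as in Lemma~\ref{lem:unpaired}) produces a contradiction only on the side of the putative cycle containing no forbidden region, so you must use adjacency of the two forbidden regions (the interior of the marked arc $a$ is disjoint from any cycle of $G_b$, hence both forbidden regions lie on the same side) to guarantee such a side exists; as written, ``counting \dots strictly inside that disk'' glosses over this.

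The genuine gap is the clock-connectivity part, which is the real content of the second sentence of the theorem. You name Kauffman's lattice strategy and the dimer height-function language, but none of the three load-bearing claims is proved: (i) existence and uniqueness of the clocked and counterclocked states, (ii) the progress claim that any non-clocked state admits a clockwise-flippable square of $\Gamma(D,a)$, and (iii) a bounded, strictly monotone potential $\mu$ forcing termination of clockwise sequences. You explicitly defer exactly these points as ``the crux,'' so what you have is a correct plan with the hard step missing rather than a proof. To close it you would need either Kauffman's original argument (construct the clocked state explicitly and show every state clocks down to it in finitely many moves) or a genuine height-function argument for perfect matchings of the planar bipartite graph $\Gamma(D,a)$, in which the implication ``interior local maximum of the height implies a flippable face'' is actually established after fixing the normalisation at the two deleted vertices.
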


Until now we only discussed the case where forbidden regions are determined by the choice of an arc on $D$. However (as also noted by Kauffman in \cite[Sec.~2]{kauffman2006formal}) this hypothesis can be relaxed. 
Denote by $(D,v_b,v_w)$ a diagram with exactly two forbidden regions, one black and one white, specified by the vertices $v_b \in G_b$ and $v_w \in G_w$, respectively and by $\Gamma(D,v_b,v_w)$ the subgraph of $\Gamma(D)$ with vertices $v_b$ and $v_w$ removed.

In~\cite[Thm. 1]{kenyon1999trees}, Kenyon, Propp and Wilson independently generalise the work by Kauffman to the case where the two unmatched vertices $v_b$ and $v_w$ are not necessarily adjacent. Here by adjacent we mean that the two corresponding regions share an edge in the projection; this is also equivalent to requiring that $v_b$ and $v_w$ are opposite vertices in a square face of the Tait graph $\Gamma(D)$. 

Their main result --recalled below-- known as the \emph{KPW correspondence} (or construction) is equivalent to the first part of Kauffman's Clock theorem, as stated in Section~\ref{sec:diagrams}, in the case of adjacent unmatched vertices. If instead the unmatched vertices are not at the opposite sides\footnote{In their setting, the matching obtained are all automatically admissible.} of a square in $\Gamma(D)$ we only get an injection between the rooted spanning trees of the black graph (with the extra choice of a vertex of the white graph) to perfect matchings in the corresponding balanced overlaid Tait graph.

\begin{theorem}[\cite{kenyon1999trees} Thm.~1]\label{thm:KPW}
Fix two vertices $v_b \in V(G_b)$ and $v_w \in V(G_w)$; then if these are adjacent, there is a bijective map from $Tree(G_b)$ to perfect matchings on $\Gamma(D,v_b,v_w)$. If instead the two vertices are not adjacent, the map is an injection.
\end{theorem}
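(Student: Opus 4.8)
The plan is to construct the correspondence explicitly and then split the proof into an injectivity part, valid for any choice of $v_b$ and $v_w$, and a surjectivity part, where adjacency does all the work. First I would define the map $\Phi\colon Tree(G_b)\to\{\text{perfect matchings on }\Gamma(D,v_b,v_w)\}$ by the rooted‑tree recipe used just before Theorem~\ref{thm:jordantrails}: given $T\in Tree(G_b)$, let $T^{\perp}\subset G_w$ be its orthogonal dual spanning tree, root $T$ at $v_b$ and $T^{\perp}$ at $v_w$, orient every edge of both away from its root, and send each crossing $c$ to the head of the unique oriented edge of $T\cup T^{\perp}$ passing through $c$ (that edge lies in $T$ or in $T^{\perp}$ according to whether $c\in T$ or $c\notin T$, since the edge sets of $T$ and of $T^{\perp}$ are complementary). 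A short verification then shows that $\Phi(T)$ is indeed a perfect matching on $\Gamma(D,v_b,v_w)$: each crossing is assigned exactly once by construction; each black region other than $v_b$ is the head of exactly one (parent) edge of $T$ and so is matched exactly once, and likewise each white region other than $v_w$; and $v_b,v_w$, being the roots, are heads of no edge, so they are precisely the two deleted vertices.

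Next I would prove that $\Phi$ is injective, using nothing about the positions of $v_b,v_w$. Given a perfect matching $M$ on $\Gamma(D,v_b,v_w)$, set
\[
A(M)\;:=\;\{\,c\in E(G_b)\ :\ M\text{ matches }c\text{ to one of its two black endpoints}\,\}.
\]
If $M=\Phi(T)$, then $A(M)=T$: any $c\in T$ is sent by $\Phi$ to the head of $c$ inside $T$, a black region, so $c\in A(M)$; conversely, if $c\notin T$ then $c^{\ast}\in T^{\perp}$ and $\Phi$ sends $c$ to a white region, so $c\notin A(M)$. Thus $T$ is reconstructed from $\Phi(T)$ as $A(\Phi(T))$, whence $\Phi$ is injective; this already settles the non‑adjacent case of the theorem.

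Finally, for surjectivity when $v_b$ and $v_w$ are adjacent, I would reduce to Kauffman's Clock Theorem. Adjacency means $v_b$ and $v_w$ share an arc $a$ of the projection, and then the forbidden regions of the marked diagram $(D,a)$ are exactly $v_b$ and $v_w$, so $\Gamma(D,a)=\Gamma(D,v_b,v_w)$ and the perfect matchings on it are precisely the Kauffman states $X(D,a)$. Unwinding definitions, the bijection $Tree(G_b)\xrightarrow{\sim}X(D,a)$ of Theorem~\ref{thm:jordantrails} — root $T$ at the forbidden black region, root $T^{\perp}$ at the forbidden white region, orient away from the roots, assign each crossing the head of its edge — is word for word the map $\Phi$; hence $\Phi$ is onto, and therefore a bijection. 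I expect the one point genuinely needing care to be exactly this identification: one must check that ``the root dictated by the arc $a$'' in Kauffman's construction is the deleted vertex $v_b$ (resp.\ $v_w$), and that the two descriptions of how a crossing picks out a region — as an edge of $G_b$ versus as the dual edge in $G_w$ — agree under the orthogonality of $T$ and $T^{\perp}$. If instead one wanted a proof independent of Theorem~\ref{thm:jordantrails}, the real obstacle would be to show directly that $A(M)$ is acyclic whenever $v_b,v_w$ are adjacent: dualising, a cycle in $A(M)\subseteq G_b$ forces the orthogonal forest in $G_w$ to miss an entire bond and hence to contain a cycle of its own, and by chasing innermost cycles one obtains nested separating circles in $S^2$ that cannot both be compatible with $v_b$ and $v_w$ occupying opposite corners of a single square of $\Gamma(D)$ — this planar bookkeeping is where a from‑scratch argument is delicate, whereas via Theorem~\ref{thm:jordantrails} it is essentially free.
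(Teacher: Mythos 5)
Your proposal is correct, but note that the paper itself offers no proof of Theorem~\ref{thm:KPW}: it is imported verbatim from Kenyon--Propp--Wilson, so there is no internal argument to compare against. Your explicit map $\Phi$ and the injectivity step --- recovering $T$ from the matching as the set $A(M)$ of crossings matched to black regions, which works for arbitrary $v_b,v_w$ --- reproduce the substance of KPW's original argument and are sound; the well-definedness check (complementarity of $E(T)$ and $E(T^\perp)$, each non-root region the head of exactly one parent edge, the roots left unmatched) is also fine. For surjectivity in the adjacent case you outsource the work to Theorem~\ref{thm:jordantrails}; this is legitimate within the paper's logic, since the Clock Theorem is cited independently and the paper explicitly says the adjacent case of the KPW correspondence \emph{is} the first part of the Clock Theorem, but it does make that half of your proof a translation rather than an independent argument: the bijection asserted in Theorem~\ref{thm:jordantrails} is stated abstractly, and one must identify it with $\Phi$ via the rooted-tree recipe described just before that theorem (root $T$ and $T^\ast$ at the two forbidden regions, orient away from the roots, assign each crossing the head of its edge) --- you correctly flag this identification as the point needing care, and it does go through. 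The only part of your write-up that is not a proof is the sketched from-scratch alternative (the ``innermost cycle'' planar chase showing $A(M)$ is acyclic when $v_b,v_w$ are adjacent); as you present it this is an outline only, but your chosen route through Theorem~\ref{thm:jordantrails} does not rely on it, so there is no gap in the argument you actually give.
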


The constructions and correspondences outlined here will be generalised in several different directions in the following  Sections. 

We will first extend the correspondence between perfect dMfs and rooted dual trees to non-maximal dMfs and \emph{orthogonal rooted forests}, then we will see in detail what happens when considering non-adjacent roots in the perfect setting. We will then prove that the image of the KPW correspondence  consists only of dMfs, and moreover, the elements in its image are in bijection with the set of perfect dMfs on a cell structure on $S^2$ induced by the given knot diagram.

What we have covered so far does not seem to be connected to clock moves, and hence to the second part of Kauffman's Clock theorem. We will remedy this in Section~\ref{sec:clickclock}. There we prove a direct generalisation of his theorem, extended to perfect admissible matchings on $\Gamma(D)$, after introducing two further moves in addition to the usual clock ones.

\section{Discrete Morse functions}\label{sec:dmf}

Discrete Morse functions, first introduced in~\cite{forman1998morse} by Forman, are functions assigning a real number to each cell in a regular cell complex, under certain combinatorial conditions. These discrete Morse functions, as the name suggests, can be thought of as being a discrete analogue of the ``classical'' smooth Morse functions.

Rather than dealing with functions themselves, following~\cite{chari2005complexes} we will use suitable equivalence classes, where two such functions are deemed equivalent if they induce the same \emph{acyclic partial matching}, which we define below. Following the previous analogy, these acyclic matchings should be considered as a discrete analogue of the gradient vector field of a Morse function. \bigskip

Let $X$ denote a regular cell complex. A \emph{partial matching} on $X$ is a decomposition of the cells of $X$ into three disjoint sets $R,U$ and $M$, along with a bijection $\mu:R\to U$ such that $dim(x)=dim\left(\mu(x)\right)-1$ and $x$ lies in the image of the attaching map of the cell $\mu(x)$, for all $x\in R$. Write $x<y$ if both aforementioned conditions hold for two cells $x$ and $y$. In the simplicial case, this just means that $x$ is a codimension 1 face of $\mu(x)$ for all $x\in R$. 

The bijection $\mu$ is \emph{acyclic} if the transitive closure\footnote{This is just the smallest transitive relation containing $\vartriangleleft$.} of the relation
\begin{equation*}
x \vartriangleleft x'\ \text{if and only if}\ x< \mu(x')
\end{equation*}
generates a partial order on $R$. The cells in $M$ are called \emph{critical}. A discrete Morse function is \emph{perfect} if $M$ is minimal. The following result is fundamental in discrete Morse theory.

\begin{theorem}{\cite[Cor.~3.5]{forman1998morse}}
If $\mu : R \to U$ is an acyclic partial matching on a regular CW complex $X$ with critical cells $M$, then $X$ is homotopy-equivalent to a CW complex whose $n$-dimensional cells correspond bijectively with the $n$-dimensional cells in $M$.
\end{theorem}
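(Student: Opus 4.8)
The plan is to run Forman's filtration argument, reorganising the acyclic matching into a nested sequence of subcomplexes of $X$ along which the homotopy type is assembled one critical cell at a time. Assume first that $X$ has finitely many cells (the general case follows by passing to the colimit over finite subcomplexes). The one place the acyclicity hypothesis is really used is at the outset: since the transitive closure of $\vartriangleleft$ is a partial order on $R$, there is a discrete Morse function $f\colon X\to\R$ whose gradient vector field is exactly $\mu$, and after a generic perturbation we may take $f$ to be injective and arranged so that each matched pair, and each critical cell, occupies its own small interval of values — equivalently, one works with a chosen linear extension of the acyclicity order. Let $c_1<\dots<c_k$ be the critical values (with $c_1=f(\sigma_0)$, where the $f$-minimal cell $\sigma_0$ is forced to be a critical $0$-cell), set $X(t):=\bigcup_{f(\sigma)\le t}\overline{\sigma}$, the subcomplex generated by the cells of value $\le t$, and choose regular values $a_0<c_1<a_1<\dots<c_k<a_k$ with no matched pair straddling any $a_i$. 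This produces a filtration $\emptyset=X(a_0)\subseteq X(a_1)\subseteq\cdots\subseteq X(a_k)=X$ in which each difference $X(a_i)\setminus X(a_{i-1})$ contains exactly one critical cell, of value $c_i$ and some dimension $p_i$.

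The induction then rests on two lemmas, both essentially due to Forman. \emph{Collapse Lemma:} if $(s,t]$ contains no critical value then $X(t)$ collapses onto $X(s)$, so $X(t)\simeq X(s)$. \emph{Attachment Lemma:} if $(s,t]$ contains a single critical value, realised by a critical cell of dimension $p$, then $X(t)$ is homotopy equivalent to $X(s)$ with one $p$-cell attached. Granting these, induct on $k$: the Attachment Lemma applied to $(a_{k-1},a_k]$ shows $X=X(a_k)$ is homotopy equivalent to $X(a_{k-1})$ with a $p_k$-cell attached; by the inductive hypothesis $X(a_{k-1})$ is homotopy equivalent to a CW complex with exactly one cell for each of the first $k-1$ critical cells, in the matching dimensions; and since a homotopy equivalence of the base extends over a cell attachment — via the mapping cylinder, using regularity of $X$ so the relevant attaching map is a genuine map of CW pairs — adjoining one more $p_k$-cell gives the desired complex. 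The base case is $X(a_0)=\emptyset$.

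The real work, and the step I expect to be the main obstacle, is the Collapse Lemma. The cells with $f$-value in a critical-value-free slab occur in pairs matched to one another, and they must be stripped away by honest elementary collapses, in the order dictated by $f$ (roughly, by decreasing value of the lower cell of each pair): when a pair $(\alpha,\mu\alpha)$ is reached, any coface of $\alpha$ in the current complex other than $\mu\alpha$ has either already been removed, or else would force $\alpha\in X(s)$ — impossible, since $\alpha$ has value $>s$ — so at that moment $\alpha$ is a free face of $\mu\alpha$ and the collapse is legitimate. Checking that this bookkeeping really closes up, with no coface left orphaned, is exactly where acyclicity is indispensable: a gradient cycle would create a circular web of dependencies among the pairs with no admissible place to begin, and the collapse would be obstructed — this is already visible in the simplest non-acyclic example, the matching of a circle's two vertices each with one of its two edges, which visibly does not collapse. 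The Attachment Lemma then follows by applying the Collapse Lemma to the slabs immediately below and above $c_i$, together with the observation that adjoining a critical $p$-cell whose entire boundary is already present amounts, up to homotopy and after those collapses, to attaching a single $p$-cell along the induced map $S^{p-1}\to X(s)$; regularity of $X$ is again used so this map is well defined.
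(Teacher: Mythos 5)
The paper offers no proof of this statement---it is quoted directly from Forman as background---so the only meaningful comparison is with Forman's original argument, which is exactly what you have reconstructed: realise the acyclic matching as the gradient of a discrete Morse function via a linear extension of the order, filter by level subcomplexes, prove the Collapse Lemma for slabs with no critical value and the Attachment Lemma for slabs with one, and induct using the standard fact that cell attachments can be pushed across homotopy equivalences of the base. Your outline is correct, and the one delicate point you flag (that in a critical-value-free slab the matched pairs can be stripped off in decreasing order of the value of the lower cell, each lower cell being a free face at its turn, with acyclicity ruling out any circular obstruction) is precisely the bookkeeping Forman's Theorems 3.3 and 3.4 carry out in detail.
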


In particular, any perfect discrete Morse function on a cell complex on $S^2$ contains exactly two critical cells: one in dimension $0$ and one in dimension $2$. 

One can visualise discrete Morse functions on a \emph{poset graph} $\mathcal{H}(X)$ (Figure \ref{fig:hasse}), a representation of the poset of cells as a graph, oriented by the incidence relation $<$ defined above.

For cells $x<y$, there is a downward arrow from $y$ to $x$. Hence we can associate to a discrete Morse function the partial matching on $\mathcal{H}(X)$ composed of the edges involved in the relation $\vartriangleleft$.

\begin{figure}[ht]
  \centering
\includegraphics[width=12cm]{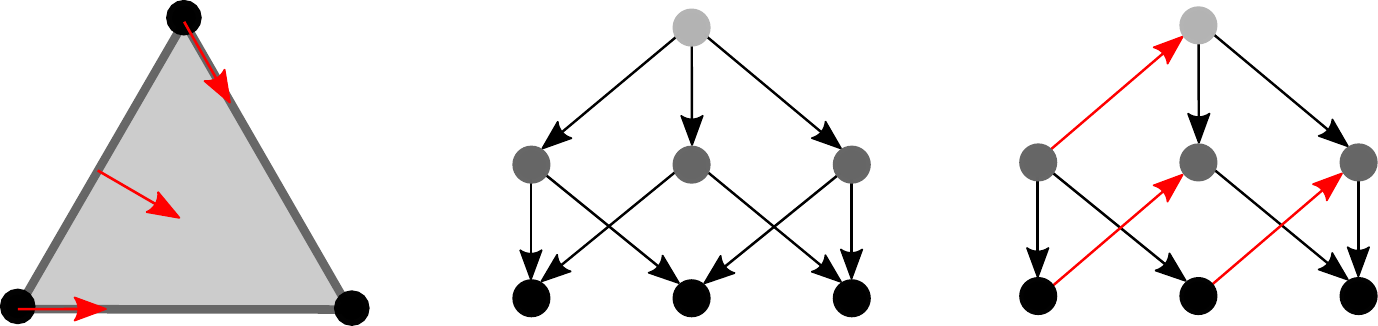}
\caption{A simplicial complex with an acyclic partial matching given by the red arrows. In the centre, its poset graph and on the right its amended poset graph, with arrows between matched cells reversed. We can see the matching is acyclic as there are no directed cycles in the amended poset graph diagram.}
\label{fig:hasse}
\end{figure}

\begin{remark}\label{rmk:dmfsubcomplex}
If $X$ is a cell complex and $\Sigma=\{(x,\mu(x))\}_{x\in R}$ an acyclic partial matching on $X$, then for any pair $Y=(y,\mu(y))\in\Sigma$, the matching $\Sigma\setminus Y$ is acyclic. This follows from the fact that since $\Sigma$ is acyclic, there is a partial order $\vartriangleleft$ on $R$, defined by $x \vartriangleleft x'$ if and only if  $x< \mu(x')$ and this is still a partial order on $R\setminus Y$.
\end{remark}

We will see in the next section that the overlaid Tait graph of a knot projection (with a suitable orientation) coincides with the poset graph of a certain cellular  structure induced on $S^2$.\\ 
We will sometimes abbreviate `equivalence class of a discrete Morse function' to `dMf' when referring to the equivalence class represented by a given acyclic matching.

\section{From knots to discrete Morse functions}\label{sec:knottodmf}

In this section we outline how to obtain matchings from a knot diagram and present the necessary conditions for such a matching to be acyclic. This is a generalisation of the construction presented by Cohen in~\cite{cohen2012correspondence}. We provide a proof for Cohen's assertion that a matching obtained from a Kauffman state via this construction is indeed a discrete Morse function in Proposition~\ref{prop:perfectdMf}.

Cohen outlines a correspondence in~\cite{cohen2012correspondence} and in~\cite[Sec. 8.2]{cohen2014kauffman}  between pairs $((D,a),x)$ with $x \in X(D,a)$ and discrete Morse functions on the $2$-sphere with a cell structure determined by $\Gamma(D,a)$. More explicitly, let us define the map
$$\Xi: \{\text{knot projections}\} \longrightarrow \{\text{cellular decompositions of }S^2\}$$
such that $\Xi(D)$ is the cellular structure on $S^2$ whose $2$-cells are given by the vertices of $G_w$, $1$-cells by crossings in $D$, and $0$-cells by the vertices of $G_b$. 

The unbalanced Tait graph associated to $(D,a)$ is a plane realisation of the abstract poset graph for this cellular structure; more precisely, we also need to orient the edges of $\Gamma(D)$ from white vertices to crossings, and from crossings to black vertices.  

As an example (for one choice of the colouring), the minimal diagrams of the alternating torus knots  $T_{2n+1,2}$ (see Figure \ref{fig:alternatingexample}) give the splitting of $S^2$ in two $(2n+1)$-gons, attached along their boundary. It follows from the definition that $G_b(D)$ provides the $1$-skeleton of $\Xi(D)$. 
\begin{figure}[ht]
  \centering
\includegraphics[width=13cm]{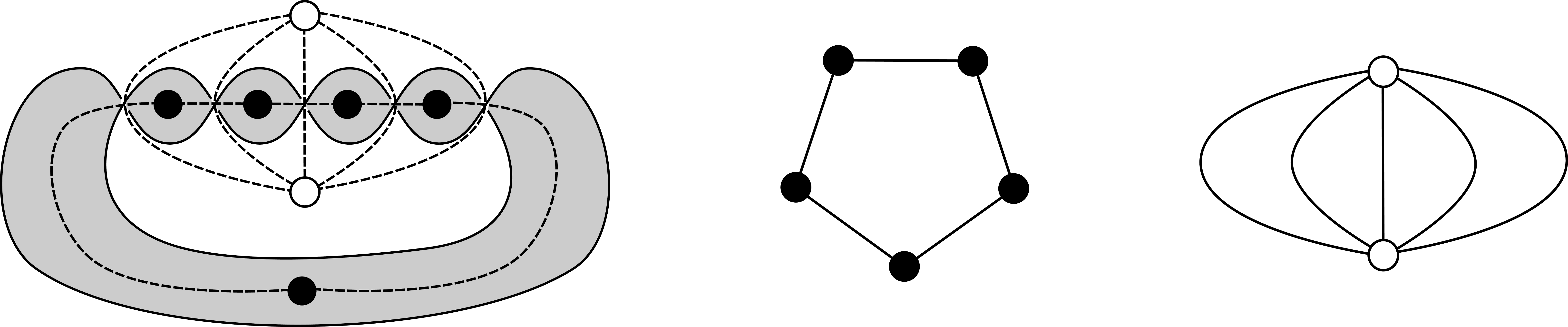}
\caption{The minimal projection $D_{2n+1}$ for the alternating torus knots $T_{2,2n+1}$ (for $n = 2$), together with its black and white graphs. The black graph, a cycle of length $2n+1$, provides the 1-skeleton of the cell structure on $S^2$ (which can be thought of as the equator). The two white vertices correspond to the two hemispheres, attached along their boundaries.}
\label{fig:alternatingexample}
\end{figure}

Cohen's main result from~\cite{cohen2012correspondence} is that if we ``extend'' $\Xi$ to pairs $((D,a),x)$ with $x \in X(D,a)$, then we get a discrete Morse function $\mu$ on $S^2$ with the cellular decomposition $\Xi(D)$. That is, the matchings on $D$ induced by Kauffman states are mapped to acyclic matchings on the cellular structure $\Xi(D)$ on $S^2$. A Kauffman state $x\in X(D,a)$ induces a matching in $\mathcal{H}(D)$ as it is a bijection between crossings and non-forbidden regions (corresponding to crossings and black or white vertices in $\Gamma(D)$). It is not immediately obvious that this induced matching is acyclic.

Cohen's construction induces maximum matchings; it was noted in Section \ref{sec:dmf} that any perfect discrete Morse function on $S^2$ has exactly two critical (unmatched) cells: one in dimension $0$ and one in dimension $2$. The forbidden regions in $(D,a)$ correspond precisely with one $2$-cell (the white forbidden region) and one $0$-cell (the black forbidden region), with any Kauffman state pairing up all other cells in $\Xi(D)$.

Using discrete Morse functions induced by Kauffman states gives rise to a restricted case of perfect discrete Morse functions, as the only two critical cells are incident to each other. 

We define\footnote{We acknowledge that the term has also been used to describe the generators of certain knot Floer homology complexes (see \emph{e.g.}~\cite{ozsvath2018kauffman}), but there is no chance of confusion with the ones defined below.} \emph{partial Kauffman states} to encompass non-maximum matchings, as well as maximum matchings with non-adjacent critical cells. 
\begin{definition}
Consider a knot diagram $D$ with $n$ crossings $\{c_1, \ldots, c_n\}$. The set of \emph{partial Kauffman states } $\widetilde{X}(D)$ is the set of injections  from a subset $\{c_i\}_{i \in I}$ to $V(G_w) \sqcup V(G_b)$, such that each crossing is paired with exactly one of its four adjacent regions, for all subsets $I \subset \{1,\ldots,n\}$. 
\end{definition}

A partial Kauffman state (pKs) on the crossings  indexed by $I$ will be often represented by placing one dot --called a \emph{component} of the pKs-- near each crossing in $\{c_i\}_{i \in I}$, in one of the four regions incident to it. In the same way, if $v_b \in V(G_b)$ and $v_w \in V(G_w)$, we can define $\widetilde{X}(D,v_b,v_w)$ as the set of pKs that do not have any component in the regions $v_b$ and $v_w$. If, moreover, these two regions are adjacent, we will shorten the notation to $\widetilde{X}(D,a)$, where $a$ is the unique arc separating $v_b$ and $v_w$.

A partial Kauffman state $x$ is \emph{admissible} if there is at least one unpaired region of each colour. $x$ is \emph{maximal} if it injects from the entire set of crossings of the diagram $D$. 

\begin{definition}
The \emph{Jordan resolution} $J(x)$ of a partial Kauffman state $x\in\widetilde{X}(D)$ is the curve formed by resolving all dotted crossings as shown in Figure~\ref{fig:singleresolvecrossing}. It is the analogue of the Jordan trail of a Kauffman state; one example is displayed in Figure~\ref{fig:partialresolvecrossing}. Write $|J(x)|$ to denote the number of connected components of the Jordan resolution, which we will call \emph{Jordan cycles}.
\end{definition}

Since $x$ is not necessarily maximal, $J(x)$ contains double points at all crossings not paired in $x$. Any connected, complete (\emph{i.e.~}all crossings are resolved) Jordan resolution is just a Jordan trail.

\begin{figure}[ht]
  \centering
\includegraphics[width=11cm]{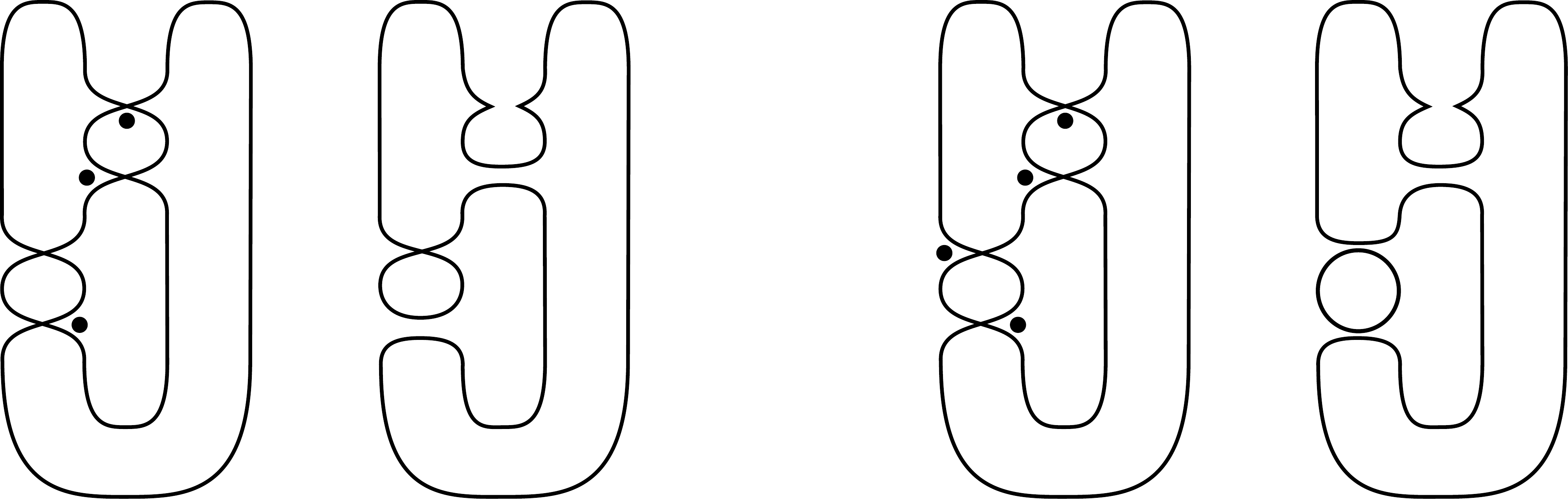}
\caption{Two partial Kauffman states and their Jordan resolutions; on the left, a connected partial resolution and on the right, a (disconnected) Jordan resolution. The pKs on the right is perfect but has non-adjacent unmatched regions; further it is not admissible.}
\label{fig:partialresolvecrossing}
\end{figure}

\begin{figure}[ht]
  \centering
\centering
\includegraphics[width =2cm]{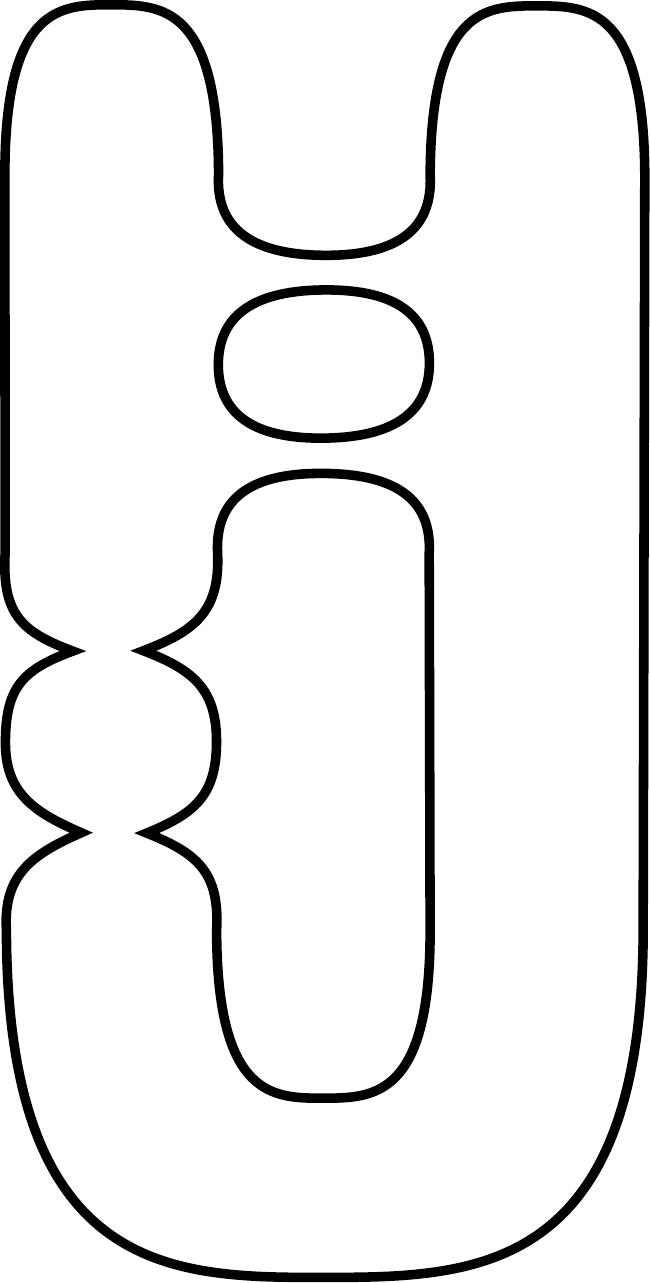}
\caption{A complete resolution not induced by a perfect matching. In particular, there is no pKs inducing this resolution. Note that the circles are not concentric on $S^2$ (\emph{cf.}~Proposition~\ref{prop:admissibleodd}).}
\label{fig:bad}
\end{figure}
We point out \emph{en passant} that, for a projection with $n$ crossings, there are a priori $2^n$ possible complete resolutions. However only a small number of these are Jordan resolutions; Figure~\ref{fig:bad} shows a complete resolution which is not induced by any perfect matching on $\Gamma(D)$. 

We say a simple loop $\gamma$ in $\Gamma(D)$ is \emph{monochromatic} if its vertices consist only of crossings and black regions or only of crossings and white regions. We say that a matching $x \in \widetilde{X}(D)$ \emph{supports} the monochromatic loop $\gamma$, if the edges of $\gamma$ are alternatively composed by edges of $x$. One example is shown in Figure~\ref{fig:isolatedcritical}.

\begin{figure}[ht]
  \centering
\includegraphics[width=9cm]{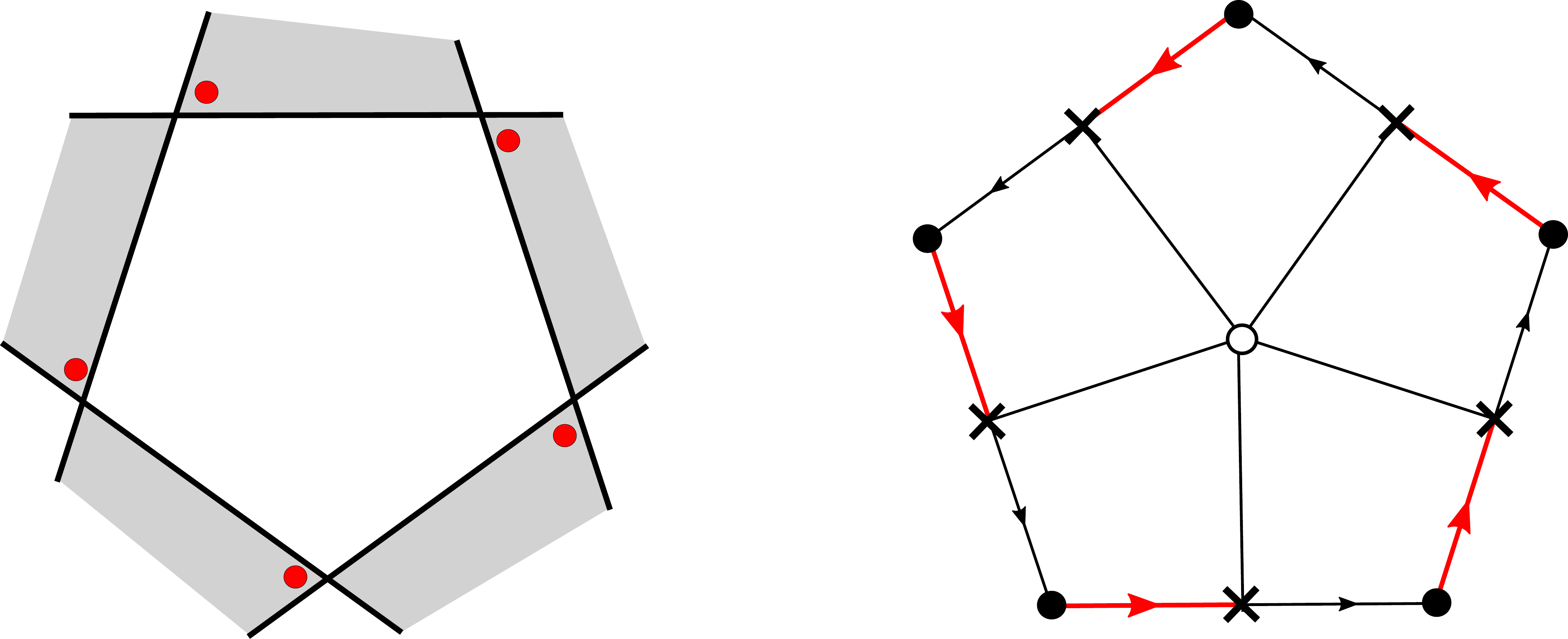}
\caption{An isolated critical point can give rise to partial matchings not inducing discrete Morse functions. The central white vertex is \emph{isolated} because all adjacent crossings are matched with other vertices. The red dots\slash edges represent a partial Kauffman state supporting the black loop surrounding the white $2$-cell.}
\label{fig:isolatedcritical}
\end{figure}

The next result is the main tool for checking whether a matching induced by a partial Kauffman state is a discrete Morse function or not.
\begin{theorem}\label{thm:dmfandstates}
A partial Kauffman state $x$ on $\Gamma(D)$ induces a discrete Morse function on $\Xi(D)$ if and only if $x$ does not support a monochromatic loop.
\end{theorem}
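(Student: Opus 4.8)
The plan is to characterise acyclicity of the matching induced by a partial Kauffman state $x$ directly in terms of directed cycles in the amended poset graph of $\Xi(D)$, and then translate these into monochromatic loops in $\Gamma(D)$. Recall that the oriented overlaid Tait graph $\Gamma(D)$ is precisely the poset graph $\mathcal{H}(\Xi(D))$: white vertices are $2$-cells, crossings are $1$-cells, black vertices are $0$-cells, and the edges point white-to-crossing and crossing-to-black. A partial Kauffman state assigns to some crossings a neighbouring region; equivalently it selects a set of edges of $\Gamma(D)$, no two sharing a vertex, i.e.\ a matching. By Forman's definition, this matching fails to be a discrete Morse function exactly when the amended poset graph $\mathcal{H}(\Xi(D))$, obtained by reversing each matched edge, contains a directed cycle.

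First I would analyse the local structure of such a directed cycle. Every crossing (a $1$-cell) has exactly two cofaces (white vertices) above it and two faces (black vertices) below it, so in $\Gamma(D)$ each crossing vertex has in-degree $2$ (from white) and out-degree $2$ (to black) before amending. After reversing the matched edges, a directed cycle $C$ must alternate between ``going up'' along a reversed (matched) edge and ``coming down'' along an unmatched edge — there is no way to traverse two consecutive unmatched edges, since an unmatched edge into a crossing is followed by an unmatched edge out of a crossing only if the crossing is unmatched, but then at the adjacent black or white vertex one can only leave along an edge pointing the correct way, and a careful check of the degree pattern shows the cycle must in fact visit, alternately, a matched edge and an unmatched edge, and its vertices are all of one colour together with crossings. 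This is the heart of the argument: a directed cycle in the amended poset graph projects to a simple closed loop in $\Gamma(D)$ whose vertices are crossings and regions of a single colour, with edges alternately in $x$ and not in $x$ — which is exactly the definition of $x$ supporting a monochromatic loop. Conversely, a monochromatic loop supported by $x$, when its $x$-edges are reversed, is manifestly a directed cycle in $\mathcal{H}(\Xi(D))$, obstructing acyclicity.

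I would organise the proof as two implications. For the ``only if'' direction (contrapositive): assume $x$ supports a monochromatic loop $\gamma$; say its vertices are white regions and crossings (the black case is dual). Orienting $\gamma$ and reversing the edges of $x$ lying on $\gamma$, each reversed edge runs crossing-to-white and each remaining edge runs white-to-crossing, so we obtain a consistently oriented directed cycle in the amended poset graph, hence $x$ is not acyclic and induces no dMf. For the ``if'' direction (also contrapositive): assume the amended poset graph has a directed cycle $C$; using the degree/coface analysis above, show $C$ cannot pass through both a black and a white vertex — a traversal entering a crossing from a white vertex along an unmatched edge must exit to a black vertex, but to continue the cycle from that black vertex one needs an outgoing edge, which only exists if that edge is matched and reversed, forcing the alternation and confining the cycle to one colour class. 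Reading $C$ back in $\Gamma(D)$ then gives a monochromatic loop supported by $x$.

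The main obstacle I anticipate is the ``if'' direction's combinatorial bookkeeping: proving that an arbitrary directed cycle in the amended poset graph is forced to be monochromatic and strictly alternating. One must rule out a cycle that, say, wanders through several unmatched crossings without reversing edges; the resolution is that after reversing the matched edges the $0$-cells have all edges incoming except possibly one reversed (matched) outgoing edge, and the $2$-cells have all edges outgoing except possibly one reversed incoming edge, so any directed cycle is forced through the matched edges and the colour cannot switch. I would also need to confirm the cycle is \emph{simple} as a loop in $\Gamma(D)$ (no repeated vertices), which follows because a matching uses each vertex at most once, so the reversed edges are vertex-disjoint and the cycle cannot revisit a region or crossing. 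Once these local constraints are nailed down, the equivalence with ``supporting a monochromatic loop'' is immediate from the definitions given before the theorem.
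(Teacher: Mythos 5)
Your proposal is correct and follows essentially the same route as the paper: both translate acyclicity into the absence of directed cycles in the amended poset graph (which is $\Gamma(D)$ with matched edges reversed), get the easy direction by reversing the $x$-edges of a supported monochromatic loop, and get the converse by showing a colour-switching directed cycle would force a crossing to be matched with two regions (the paper's ``top configuration''), contradicting injectivity of the partial Kauffman state. Your later degree-based observation (black vertices have at most one outgoing edge, white vertices at most one incoming edge) is the correct way to nail the confinement to one colour class and supersedes the slightly imprecise remark about consecutive unmatched edges, so no genuine gap remains.
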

\begin{proof}
It is easy to see that if $x$ supports a monochromatic loop, then its components on the loop give rise to an oriented cycle in the poset\slash Tait graph $\Gamma(D)$, as in Figure~\ref{fig:isolatedcritical}. Conversely, if $x$ does not support any monochromatic loop, then all possible oriented loops in $\Gamma(D)$ must contains vertices of both colours. Hence, in each directed cycle in $\Gamma(D)$ we can find a portion of $\Gamma(D)$ that looks like one of the two configurations in Figure~\ref{fig:monoloops}. 
\begin{figure}[ht]
  \centering
\includegraphics[width=4cm]{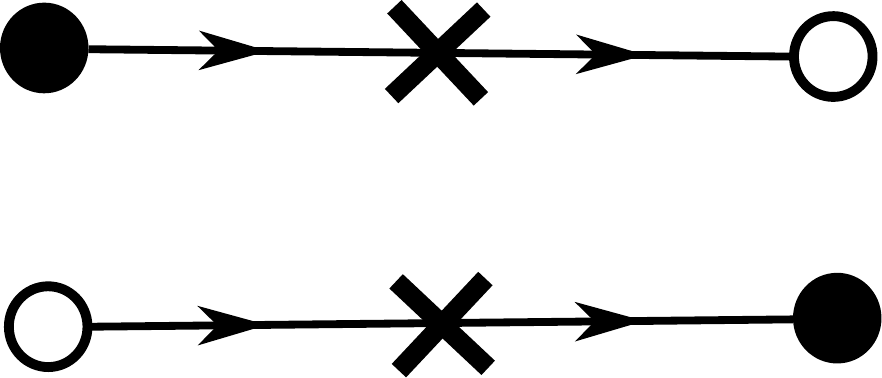}
\caption{Any directed cycle (induced by a partial Kauffman state) in the Tait diagram containing vertices of both colours must contain these configurations.}
\label{fig:monoloops}
\end{figure}
The top configuration can occur if and only if both vertices have been matched with the central crossing, as both arrows have been reversed, which contradicts the definition of a partial Kauffman state. The bottom configuration can occur in a directed cycle if and only if there is (eventually) a section of the cycle containing the top configuration (as there would have to be an arrow from a black vertex to a crossing and another from the crossing back to the white vertex on the left). Hence, there are no such directed cycles.
\end{proof}

The next result is an extended version of the the construction given by Cohen in \cite{cohen2012correspondence}.

\begin{proposition}\label{prop:perfectdMf}
Assume $x$ is a maximal pKs on $D$. If the unpaired white and black vertices are two vertices of a square in $\Gamma(D)$, \emph{i.e.}~the two corresponding regions are adjacent in the diagram, then $x$ induces a perfect discrete Morse function on $\Xi(D)$.
\end{proposition}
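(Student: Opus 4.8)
The plan is to deduce Proposition~\ref{prop:perfectdMf} from Theorem~\ref{thm:dmfandstates} together with a count of critical cells. First I would argue that $x$ induces a discrete Morse function at all: since $x$ is a \emph{maximal} partial Kauffman state, every crossing is matched, so every edge of $\Gamma(D)$ that could participate in a monochromatic loop $\gamma$ meets some matched crossing; but a monochromatic loop supported by $x$ would have to alternate matched/unmatched edges around $\gamma$, forcing one of its vertices (say a white region) to be matched to two distinct crossings of $\gamma$, contradicting the injectivity in the definition of a pKs. Hence $x$ supports no monochromatic loop, and by Theorem~\ref{thm:dmfandstates}, $x$ induces a dMf on $\Xi(D)$.

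Next I would identify the critical cells. Under $\Xi$, the $0$-cells are the vertices of $G_b$, the $1$-cells are the crossings, and the $2$-cells are the vertices of $G_w$. Since $x$ is maximal, every $1$-cell (crossing) is matched, and it is matched either ``down'' to an incident black vertex or ``up'' to an incident white vertex; in either case no $1$-cell is critical. A $0$-cell is critical precisely when the corresponding black region receives no dot, and a $2$-cell is critical precisely when the corresponding white region receives no dot. Because $x$ is maximal with $n$ crossings on a diagram, a counting argument — using $|V(G_b)| + |V(G_w)| = n + 2$ (Euler characteristic of $S^2$ with the cell structure $\Xi(D)$, equivalently the fact that $\Gamma(D)$ has $n$ crossing-vertices and divides $S^2$ into $n$ squares) — shows that exactly one black and one white region are left unmatched. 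So $M$ consists of exactly one $0$-cell and one $2$-cell.

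Finally, by Forman's theorem (the result quoted after the definition of acyclicity), the dMf induced by $x$ is perfect: any cell complex on $S^2$ must have at least one critical cell in dimension $0$ and one in dimension $2$, so having exactly one of each is minimal. I would remark that the hypothesis that the two unpaired regions are opposite vertices of a square in $\Gamma(D)$ — i.e.\ the black and white forbidden regions are adjacent in $D$ — is what pins down \emph{which} maximal pKs arise here (it is exactly the setting of Theorem~\ref{thm:jordantrails} and the adjacent case of Theorem~\ref{thm:KPW}), though strictly for the conclusion ``perfect dMf'' only maximality is used; the adjacency will matter when comparing with the Clock Theorem later.

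The main obstacle is the bookkeeping in the second paragraph: one must be careful that ``maximal'' (injecting from the full crossing set) genuinely forces exactly two unmatched region-vertices rather than, say, a crossing being matched while two regions of the same colour go unmatched and one of the other colour is double-counted — this is ruled out precisely because $x$ is a genuine injection into $V(G_w)\sqcup V(G_b)$ and by the Euler-characteristic identity $|V(G_b)|+|V(G_w)|-n = 2$. Everything else is a direct appeal to Theorem~\ref{thm:dmfandstates} and Forman's theorem.
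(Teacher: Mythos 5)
There is a genuine error in your first step, and it is exactly the point where the adjacency hypothesis must enter. A monochromatic loop $\gamma$ in $\Gamma(D)$ has even length: its vertices alternate between crossings and regions of a single colour, so $\gamma$ is a cycle with $2n$ edges. Saying that $x$ \emph{supports} $\gamma$ means that the edges of $x$ lying on $\gamma$ form one of the two alternating perfect matchings of this even cycle; in such a configuration every region-vertex on $\gamma$ is matched to exactly one crossing of $\gamma$ and every crossing of $\gamma$ to exactly one region, so no vertex is ``matched to two distinct crossings'' and there is no conflict with the injectivity of the pKs. Consequently, maximality alone does \emph{not} rule out supported monochromatic loops: the paper exhibits a maximal pKs (with non-adjacent unpaired regions) supporting both a black and a white monochromatic loop, hence inducing no dMf (see Figure~\ref{fig:nodmf} and Remark~\ref{rmk:critpoints}). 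Your closing remark that ``strictly for the conclusion `perfect dMf' only maximality is used'' is therefore false, and the proposition as you argue it would be wrong.

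The paper's proof uses the adjacency in an essential way. Supposing $x$ supports a monochromatic loop $\gamma$, the loop separates $S^2$ into two regions $R$ and $R'$; an Euler-characteristic count on one side of $\gamma$ (Lemma~\ref{lem:unpaired}) shows that \emph{each} of $R$ and $R'$ must contain exactly one unpaired vertex of the maximal state. Since the two forbidden regions are adjacent, they cannot be separated by $\gamma$ and so both lie in, say, $R$, giving two unpaired vertices on one side — a contradiction. Hence no monochromatic loop is supported, and Theorem~\ref{thm:dmfandstates} applies. Your second and third paragraphs (the count showing exactly one unmatched $0$-cell and one unmatched $2$-cell, and the appeal to Forman's theorem for perfection) are fine, but the heart of the proposition is the loop-exclusion step, and that is where your argument breaks down.
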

\begin{proof}
By Theorem~\ref{thm:dmfandstates}, it is sufficient to show that a maximal partial Kauffman state $x$ with adjacent unpaired vertices does not admit a monochromatic loop on $\Gamma(D,a)$, where $a$ is the unique arc common to the two unpaired regions.  Assume, for a contradiction, that $x$ contains a monochromatic loop $\gamma$. The loop divides $S^2$, and consequently the vertices of  $\Gamma(D,a)$, in to three parts; those belonging to $\gamma$, and two other connected components $R$ and $R^\prime$. We can assume without loss of generality that the two forbidden regions are contained in $R$, as they are adjacent and so cannot be separated by $\gamma$.
The thesis will then follow from the Lemma below.
\begin{lemma}\label{lem:unpaired}
Let $x$ be a perfect and admissible pKs on $\Gamma(D)$, supporting a monochromatic loop $\gamma$. Then, both regions in the complement of $\gamma$ contain exactly one unpaired vertex. 
\end{lemma}
\begin{proof}
Again, call the two regions $R$ and $R^\prime$, and assume wlog that $\gamma$ is black and has length $2n$, meaning that there are $n$ black regions and $n$ crossings as vertices. Consider the cellular structure on $R$ induced by the restriction $G_b^\prime$ of the black graph to $R$. Since $R$ is a disc, we have $1 = V(G_b^\prime) - E(G_b^\prime) + F(G_b^\prime)$, where $F(G_b^\prime)=V(G_w^\prime)$ since the graphs are dual. 

We can write $ V(G_b^\prime) = n + b^\circ$, $E(G_b^\prime) = n + c^\circ$, $F(G_b^\prime) = w^\circ$, where $b^\circ, c^\circ$ and $w^\circ$ are the number of black vertices, crossings and white vertices, respectively, in the interior of $R$. Then, $1+c^\circ = b^\circ + w^\circ$, and there is at least one unpaired white\slash black vertex. Since we are assuming that $x$ is maximal, there can be at most one unpaired vertex in each region.
\end{proof}
We can now conclude the Proposition \ref{prop:perfectdMf} by noting that, by assumption, $R$ contains two unpaired vertices.
\end{proof}

\begin{figure}[ht]
  \centering
\includegraphics[width=12cm]{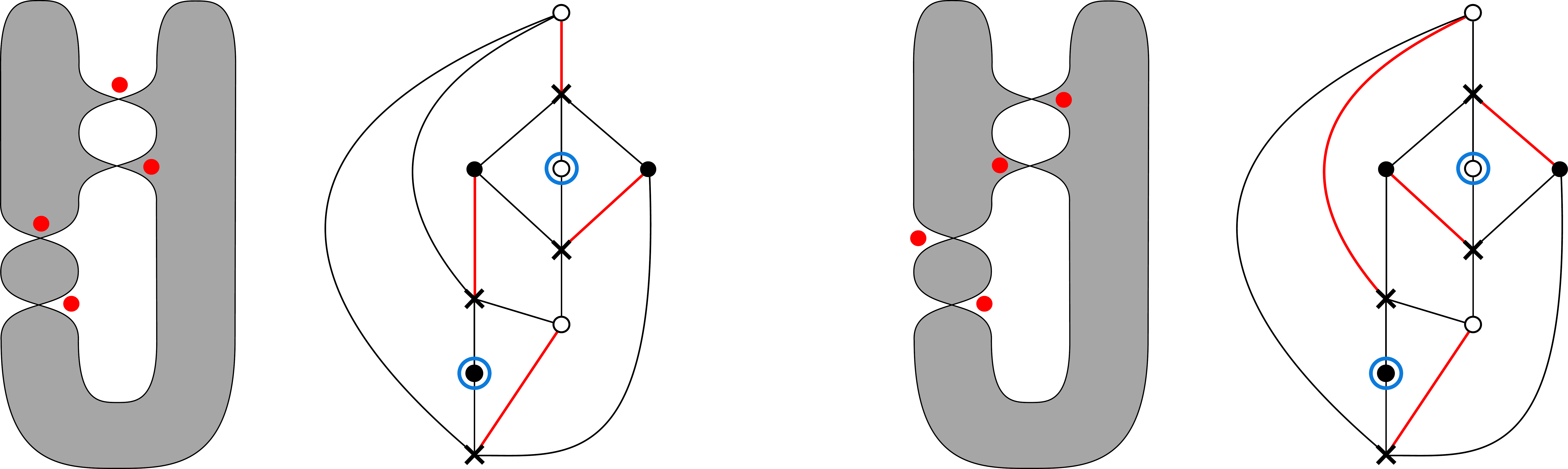}
\caption{On the left, a maximal matching inducing a dMf for a projection of the projection from Figure~\ref{fig:tait}, with non-adjacent unpaired black and white regions (circled in blue).  The red pKs shown on the right is maximal too, but contains two monochromatic loops, one white and one black. Hence, it does not induce any discrete Morse function.}
\label{fig:nodmf}
\end{figure}

\begin{remark}\label{rmk:critpoints}
If a matching admits a monochromatic loop $\gamma$, then it follows from the proof of Proposition~\ref{prop:perfectdMf} that $\gamma$ necessarily contains at least one unpaired vertex on both regions it divides $S^2$ in. It is however easy to exhibit examples of knot diagrams admitting perfect matchings with an arbitrary number of monochromatic loops.
\end{remark}

We conclude this section with a result relating discrete Morse functions to spanning forests, which should be thought of as an analogue of \cite[Prop~3.1]{chari2005complexes} and \cite[Thm~1.17]{zibrowius2016heegaard}, and as a generalisation of the already mentioned (see also~\cite{kenyon1999trees} and~\cite{kauffman2006formal}) bijection between Kauffman states and orthogonal pairs of rooted spanning trees in $G_b$ and $G_w$.

\begin{theorem}\label{thm:dmfforest}
There is a bijection between discrete Morse functions on $\Xi(D)$ induced by admissible partial Kauffman states and rooted orthogonal spanning forests in $G_b$ and $G_w$. In particular, when the admissible partial Kauffman states are maximal, this reduces to a bijection between perfect discrete Morse functions and rooted orthogonal spanning trees.
\end{theorem}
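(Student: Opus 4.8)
The plan is to construct explicit maps in both directions and check they are mutually inverse, building on the $\mu$-induced structure already discussed. Given a dMf on $\Xi(D)$ coming from an admissible pKs $x$, I would first record which edges of the poset graph $\Gamma(D)$ are matched: each matched pair $(c, v_b)$ with $c$ a crossing and $v_b$ a black vertex gives a half-edge of $G_b$, and dually each matched pair $(c, v_w)$ gives a half-edge of $G_w$. Collecting these, one obtains a subgraph $H_b \subseteq G_b$ (whose edges are exactly the crossings matched to black vertices) and orthogonally a subgraph $H_w \subseteq G_w$ (crossings matched to white vertices); by construction every crossing contributes to exactly one of $H_b$, $H_w$, or to neither (if it is critical), so $H_b$ and $H_w$ are orthogonal in the sense defined in the excerpt. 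I would then designate as roots the unpaired (critical) black vertices for $H_b$ and the unpaired white vertices for $H_w$; admissibility guarantees there is at least one of each, so the ``rooted'' structure is nonempty.

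The crux is to show $H_b$ and $H_w$ are spanning forests, i.e.\ acyclic and spanning, with exactly one root per connected component. Spanning is automatic since $V(H_b)=V(G_b)$ (we keep all black vertices). Acyclicity is exactly where Theorem~\ref{thm:dmfandstates} enters: a cycle in $H_b$ would be, after reinserting the crossing-vertices, a monochromatic loop in $\Gamma(D)$ whose edges alternate between $x$-edges and non-$x$-edges — precisely a monochromatic loop supported by $x$ — contradicting that $x$ induces a dMf. So $H_b$ is a forest, and dually $H_w$ is a forest. For the root count: I would argue that in each connected component of the forest $H_b$, the number of unpaired black vertices is exactly one. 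One inequality (at least one unpaired vertex per component, hence per region cut out) follows from the Euler-characteristic count already carried out in Lemma~\ref{lem:unpaired} — applied now not to a monochromatic loop but to each face/component of the forest, or more cleanly by induction on the number of monochromatic loops using Remark~\ref{rmk:critpoints}. The reverse inequality (at most one) is an edge-counting argument: a tree on $k$ vertices has $k-1$ edges, and matching these $k-1$ edges (crossings) to $k-1$ of the $k$ vertices leaves exactly one unmatched; globally, summing over components and using orthogonality of $H_b$ and $H_w$ with the fact that crossings are partitioned, forces exactly one root per component on each side.

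For the inverse map, given rooted orthogonal spanning forests $F_b \subseteq G_b$ and $F_w \subseteq G_w$, I would mimic the rooted-tree-to-Kauffman-state recipe recalled before Theorem~\ref{thm:jordantrails}: orient every edge of $F_b$ away from the root of its component and likewise for $F_w$, then assign to each crossing lying on an oriented edge $u \to v$ of $F_b$ (or $F_w$) the target $v$. Crossings lying on no edge of $F_b \cup F_w$ remain unmatched. Orthogonality ensures each crossing lies on at most one edge of $F_b \sqcup F_w$, so this is a well-defined injection from a subset of crossings to $V(G_b) \sqcup V(G_w)$, i.e.\ a pKs $x$; the roots are exactly the unmatched vertices, so $x$ is admissible. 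That $x$ supports no monochromatic loop follows because a supported black monochromatic loop would project to a cycle in $F_b$, contradicting that $F_b$ is a forest; hence by Theorem~\ref{thm:dmfandstates} it induces a dMf on $\Xi(D)$. Checking the two constructions are mutually inverse is then routine: starting from $x$, extracting $(F_b,F_w)$ and reorienting recovers the same crossing-to-region assignment because the ``flow away from the root'' orientation is forced by acyclicity of the matching $\mu$ (the partial order $\vartriangleleft$ orients each tree away from its critical vertex).

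The main obstacle I anticipate is the bookkeeping in the ``exactly one root per component'' step — specifically making the local Euler-characteristic / edge-count argument uniform across the two colours simultaneously, since the components of $F_b$ and $F_w$ interleave in $S^2$ and one must be careful that the global partition of crossings into (matched-to-black) $\sqcup$ (matched-to-white) $\sqcup$ (critical) is compatible with the componentwise tree edge-counts on both graphs at once. The maximal case is cleaner: then there are no critical crossings, so the edge counts are tight and one immediately gets $|E(F_b)| + |E(F_w)| = n$ with each being a genuine spanning tree (one root each), recovering the classical bijection; the general forest case requires tracking the defect introduced by critical crossings, which is exactly the content of Lemma~\ref{lem:unpaired} read in reverse.
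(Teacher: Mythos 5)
Your proposal is correct and follows essentially the same route as the paper: extract the orthogonal subgraphs from the matched $0/1$- and $1/2$-cell pairs, get acyclicity from Theorem~\ref{thm:dmfandstates} and orthogonality from injectivity of the pKs, take the unmatched vertices as roots, and invert by orienting each forest component away from its root as in the classical tree-to-Kauffman-state recipe. The only wrinkle is your detour through Lemma~\ref{lem:unpaired} for the ``at least one root per component'' direction, which is unnecessary (and not really what that lemma says): your own edge count already gives exactly one unmatched vertex per tree component, since a component with $k$ vertices has $k-1$ edges, each matched injectively to one of its endpoints, and every matched vertex of the component is matched through an edge of that component.
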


The latter part of the statement is just a part of Kauffman's Clock Theorem for non-necessarily adjacent forbidden regions. Before we can prove the theorem, we need to introduce a version of the trees induced by a Kauffman state, extended to admissible partial states.
\begin{definition}
Given a partial Kauffman state $x$ inducing a discrete Morse function on $\Xi(D)$ we can associate to it a pair of subgraphs $H_x^b,H_x^w$ of $G_w$ and $G_b$ respectively, in the same way as is done with perfect discrete Morse function: if $x$  matches a $0\slash 1$-cell pair, we include in $H_x^b$ the edge of $G_b$ corresponding to the crossing giving the matched $1$-cell in $\Xi(D)$. Likewise with $1\slash 2$-cell pairs and edges in $H_x^w$.
\end{definition}

\begin{proof}[Proof of Theorem~\ref{thm:dmfforest}]
First, the subgraphs $H_x^b$ and $H_x^w$ associated to $x$ are in fact forests as a consequence of Theorem~\ref{thm:dmfandstates}. Suppose that $H_x^b$ and $H_x^w$ are not orthogonal; then there exists an edge $e$ in $H_x^b$ intersecting an edge $e^\ast$ in $H_x^w$. But $e$ and $e^\ast$ intersect at a unique crossing of $D$, and for both edges to be included in their respective graphs, the crossing must be matched with one black and one white region, contradicting the injectivity condition of a partial Kauffman state. Hence, $H_x^b$ and $H_x^w$ are orthogonal.

In this setting, critical points of $x$ are given by unmatched vertices and crossings; each connected component (which is necessarily a  tree --possibly composed by a single vertex), contains exactly one unmatched region. The root of each connected component in the forests is the unique unmatched vertex in each component.

Conversely, any such pair of spanning orthogonal forests produces a unique equivalence class of dMfs by applying the process in reverse; there are no induced directed cycles in $\Gamma(D)$ as the matching is induced by forests, hence it gives an equivalence class of dMfs. The roots are uniquely determined in a similar way to the description in Figure \ref{fig:treekauffstate}.

When we restrict to maximum pKs, there is exactly one unmatched vertex of each colour, corresponding to exactly one root for each forest, hence the forests contain exactly one connected component each. 
\end{proof}

We conclude this section with the following result, relating Jordan resolutions to dMfs.

\begin{corollary}\label{cor:dmfconnectedJ}
An admissible partial Kauffman state $x \in \widetilde{X}(D)$ induces a discrete Morse function on $S^2$ (through Cohen's construction) if and only if $J(x)$ is connected.
\end{corollary}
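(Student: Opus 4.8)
The plan is to connect the two characterisations we already have: Theorem~\ref{thm:dmfandstates} tells us that $x$ induces a dMf if and only if $x$ supports no monochromatic loop, so it suffices to show that, for an \emph{admissible} $x$, the Jordan resolution $J(x)$ is disconnected if and only if $x$ supports a monochromatic loop. First I would fix notation: recall that every crossing matched by $x$ contributes a single arc (a smoothing) to $J(x)$, and that the edges of $\Gamma(D)$ incident to such a crossing are split by the smoothing into two opposite pairs, exactly as in Figure~\ref{fig:singleresolvecrossing}. The key geometric observation is that a monochromatic loop $\gamma$ supported by $x$ — say a black loop, alternating between black vertices and matched crossings — runs parallel to a closed component of $J(x)$: at each matched crossing along $\gamma$ the smoothing separates the two black half-edges of that crossing from the two white ones, so a single Jordan cycle can be drawn as the boundary of a thin regular neighbourhood of $\gamma$ inside the union of black regions it meets. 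This Jordan cycle is embedded and bounds a disc (a neighbourhood of $\gamma$), so it cannot be the only component of $J(x)$ on $S^2$; hence $J(x)$ is disconnected. This handles one direction.

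For the converse I would argue the contrapositive using an Euler-characteristic / planarity count, reusing the bookkeeping from the proof of Lemma~\ref{lem:unpaired}. Suppose $x$ is admissible and supports no monochromatic loop; by Theorem~\ref{thm:dmfandstates} it induces a dMf, so by Theorem~\ref{thm:dmfforest} it corresponds to a pair of rooted orthogonal spanning forests $H_x^b \subset G_b$ and $H_x^w \subset G_w$. I want to show $J(x)$ is connected. The cleanest route is the one suggested by Figure~\ref{fig:treekauffstate}: $J(x)$ is (isotopic to) the boundary of a regular neighbourhood of $H_x^b \cup H_x^w$ drawn on $S^2$, where the two forests are glued to each other along the matched crossings. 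Since $H_x^b$ and $H_x^w$ are orthogonal spanning forests of dual graphs, every vertex of $G_b$ and every vertex of $G_w$ lies in this neighbourhood, and a matched crossing glues a component of $H_x^b$ to a component of $H_x^w$. The admissibility hypothesis guarantees there is an unmatched vertex of each colour, so the complement of the neighbourhood is nonempty; and a standard contractibility argument shows that the union $H_x^b \cup H_x^w$, together with these gluings, deformation retracts onto a point precisely because there are no monochromatic loops (a monochromatic loop is exactly a cycle in this glued object that does not bound) — so its regular neighbourhood is a disc and its boundary $J(x)$ is a single circle.

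To make the contractibility step rigorous I would instead do the Euler count directly on the disc(s) cut out by $J(x)$, mirroring Lemma~\ref{lem:unpaired}: if $J(x)$ had $k \geq 2$ components, they cut $S^2$ into $k+1$ regions (some possibly nested), and applying $V - E + F = 1$ to the appropriate sub-surface, together with the identities $V(G_b') = (\text{matched crossings inside}) + b^\circ$ etc., forces an extra unmatched vertex to appear, or forces a monochromatic loop along one of the $J(x)$-components — in either case contradicting the standing hypotheses once one checks that admissibility plus "no monochromatic loop" pins down the number of unmatched vertices to exactly one of each colour (which is the maximal case from Theorem~\ref{thm:dmfforest}) and more generally bounds them correctly in the non-maximal case. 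The main obstacle I anticipate is the non-maximal case: when $x$ is not maximal the forests genuinely have several components and $J(x)$ picks up extra Jordan cycles around each unmatched crossing, so the naive "boundary of a neighbourhood of a tree" picture is not literally a single circle unless one is careful that each unmatched crossing is a double point of $J(x)$ (as stated in the paragraph after the definition of $J(x)$) rather than a place where $J(x)$ splits — i.e.\ one must verify that passing through a double point keeps $J(x)$ connected, and that the only way to disconnect it is to surround a genuine monochromatic loop. Getting that local analysis at unmatched crossings exactly right, and reconciling it with the Euler count, is where the real work lies; everything else is bookkeeping already set up in the preceding lemmas.
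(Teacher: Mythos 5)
Your reduction via Theorem~\ref{thm:dmfandstates} is the same one the paper's (very terse) proof intends, but neither direction of your argument actually closes. For ``monochromatic loop $\Rightarrow J(x)$ disconnected'', your justification has two problems. First, the boundary of a thin neighbourhood of $\gamma$ need not be a Jordan cycle of $J(x)$: between consecutive crossings of $\gamma$ the nearby portion of $J(x)$ runs along the boundaries of the black regions that $\gamma$ traverses, and these boundaries contain further crossings which may be unmatched double points or smoothings leading elsewhere, so no embedded ``parallel'' component of $J(x)$ is guaranteed. Second, even granting such a cycle, ``it bounds a disc, so it cannot be the only component of $J(x)$'' is a non sequitur: every simple closed curve on $S^2$ bounds a disc, and the Jordan trail of an honest Kauffman state is exactly one such curve. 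The statement you need is more elementary: since each crossing on $\gamma$ has its dot in a corner of $\gamma$'s colour, the smoothing there (Figure~\ref{fig:singleresolvecrossing}) pushes $J(x)$ off $\gamma$, so $J(x)\cap\gamma=\emptyset$; and the two opposite-colour corners at such a crossing lie in the two different discs that $\gamma$ bounds, so $J(x)$ meets both sides of $\gamma$ and is therefore disconnected.

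The converse (``no monochromatic loop $\Rightarrow J(x)$ connected'') is where the proposal stops being a proof, and you say so yourself; worse, the intermediate claim you lean on -- that admissibility plus acyclicity ``pins down the number of unmatched vertices to exactly one of each colour'' -- is false precisely in the non-maximal case: by Theorem~\ref{thm:dmfforest} a non-maximal dMf has one unmatched vertex (a root) for \emph{each} component of each forest, together with unmatched crossings, so the ``extra unmatched vertex'' your Euler count is meant to produce is not a contradiction at all. A count that does work is the following, and it is the bookkeeping your sketch is missing: the faces of $S^2\setminus J(x)$ are exactly the unions of regions of $D$ joined through the smoothing channels, hence are in bijection with the connected components of $H_x^b\sqcup H_x^w$; applying $V-E+F=1+|J(x)|$ to the (possibly disconnected) $4$-valent graph $J(x)$, whose vertices are the unmatched crossings, and using that the matched crossings are exactly the edges of $H_x^b\sqcup H_x^w$, one gets $|J(x)|=1+\beta_1(H_x^b)+\beta_1(H_x^w)$, so connectivity of $J(x)$ is literally the forest condition of Theorems~\ref{thm:dmfandstates} and~\ref{thm:dmfforest}. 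Until the unmatched-crossing local analysis is packaged in some such form, your plan (regular neighbourhood of the glued forests, or the loosely described Euler argument) does not establish the ``only if'' half, so the proposal has a genuine gap in this direction.
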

\begin{proof}
The result follows easily from the characterisation provided of matchings induced by dMfs just provided, together with the fact that monochromatic loops always disconnect $J(x)$.
\end{proof}

\section{Counting discrete Morse functions}\label{sec:counting}

Theorem~\ref{thm:dmfforest} provides us with enough structure to count the number of discrete Morse functions on $\Xi(D)$, for a given knot diagram $D$. Figuring out the number of (perfect) dMfs for a given class of simplicial complexes is in general a challenging problem, and the precise number is known, for example, for the complete graph $K_n$, but unknown for the $n$-simplex for $n > 3$ (see respectively Sections $3$ and $5$ of~\cite{chari2000discrete}).\bigskip

The \emph{Laplacian} $L(G)$ of a graph $G$ on $n$ vertices is the $n\times n$ matrix
$$L(G) = \text{D}(G)-\text{A}(G),$$
where $\text{D}(G)$ is the diagonal matrix with vertex degrees on the diagonal and $\text{A}(G)$ is the adjacency matrix of $G$. It is well-known\footnote{See~\cite{biggs1993algebraic} for a complete source of related definitions and results.} that for a connected graph, the Laplacian has exactly one 0 eigenvalue, and that the non-zero eigenvalues $\lambda_2\leq\cdots\leq\lambda_n$ are strictly positive. Denote the characteristic polynomial of $L(G)$ by
$$p_{G} (t) = t^n + c_1 t^{n-1} + \ldots + c_{n-1}t + c_n.$$
Since we are dealing only with connected graphs, we can assume that $c_n = 0$. To stress the dependence of the coefficients on the graph, we will sometimes write $c_i(G)$ instead of just $c_i$.

Given an integer $0\le e \le |E(G)|$, let $\phi(G,e)$ denote the number of spanning forests in $G$ with exactly $e$ edges and let $F_i^e$ denote a subforest of $G$ with $e$ edges, for $i = 1, \ldots, \phi(G,e)$. For each $F_i^e$, define
$$\rho(F_i^e) = m_1\ldots m_k,$$ where $m_j$ is the number of vertices in the $j$-\emph{th} connected component of $F_i^e$, so $\rho(F_i^e)$ counts the number of distinct ways of rooting the forest $F_i^e$. It is a well-known result~\cite[Thm.~7.5]{biggs1993algebraic} that
\begin{equation}\label{eqn:forestcoeff}
(-1)^e c_e(G) = \sum_{i = 1}^{\phi(G,e)} \rho(F_i^e).
\end{equation}
In other words, the $e$-\emph{th} coefficient of $p_{G}$ is (up to alternating sign) the number of rooted forests in $G$ with $e$ edges. 
In what follows, $|det(K)|$ denotes the \emph{knot determinant}, which  is defined as the evaluation at $-1$ of the Alexander polynomial of $K$ \cite{rolfsen2003knots}.

\begin{proposition}\label{prop:countperfect}
Let $D$ be a diagram of a knot $K$ and $L_b$ the Laplacian matrix of $G_b(D)$. Let $\lambda_2,...,\lambda_{n}$ be the non-zero eigenvalues of $L_b$, where $n = |V(G_b)|$. There are exactly 
$$ \left(\prod_{i=2}^{n}\lambda_i \right)\cdot |V(G_w)| $$ 
perfect discrete Morse functions on $\Xi(D)$. Moreover, if $D$ is a minimal diagram of an alternating knot $K$, then the expression can be written as $$|det(K)| \cdot |V(G_b)|\cdot |V(G_w)|.$$
\end{proposition}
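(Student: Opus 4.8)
The plan is to use Theorem~\ref{thm:dmfforest} to reduce the count to a purely combinatorial problem about rooted spanning trees, and then invoke the Matrix-Tree theorem. By Theorem~\ref{thm:dmfforest}, perfect discrete Morse functions on $\Xi(D)$ are in bijection with rooted orthogonal spanning tree pairs $(T, T^\perp)$ in $G_b$ and $G_w$, where the choice of root in $T$ automatically determines the root in $T^\perp$ (and conversely). So the data is: a spanning tree $T$ of $G_b$, a choice of root vertex for $T$, and — independently — nothing more, since $T^\perp$ and its root are then forced. Wait: I need to be careful about whether the white root is free. Re-examining the construction in Figure~\ref{fig:treekauffstate}, the two roots are forced to be the endpoints of a single distinguished crossing-edge only in the adjacent case; in the general orthogonal-forest setting of Theorem~\ref{thm:dmfforest} the root of $T$ is one free choice and the root of $T^\perp$ is another free choice (an unmatched white region can be anywhere). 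Hence the data is a triple: a spanning tree $T$ of $G_b$, a root for it, and a root for $T^\perp$ in $G_w$. Since $T^\perp$ is a spanning tree of $G_w$, it has $|V(G_w)|$ vertices, so there are $|V(G_w)|$ choices of white root. The count of (tree of $G_b$, root) pairs is $\sum_{T \in Tree(G_b)} |V(G_b)| = |V(G_b)| \cdot |Tree(G_b)|$, but more efficiently: the number of rooted spanning trees of a connected graph on $n$ vertices equals $n \cdot \kappa(G_b)$ where $\kappa$ is the number of spanning trees, and by the Matrix-Tree theorem (equivalently, Equation~\eqref{eqn:forestcoeff} with $e = n-1$, where a spanning tree is a one-component forest so $\rho(T) = n$) we have $n \cdot \kappa(G_b) = (-1)^{n-1} c_{n-1}(G_b) = \prod_{i=2}^{n} \lambda_i$, the product of the nonzero Laplacian eigenvalues. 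Multiplying by the $|V(G_w)|$ choices of white root gives $\left(\prod_{i=2}^{n}\lambda_i\right)\cdot |V(G_w)|$, as claimed.

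For the alternating refinement, I would first rewrite $\prod_{i=2}^n \lambda_i = n \cdot \kappa(G_b) = |V(G_b)| \cdot \kappa(G_b)$, so the total count is $|V(G_b)|\cdot|V(G_w)|\cdot\kappa(G_b)$. It then remains to show $\kappa(G_b) = |det(K)|$ for a minimal diagram of an alternating knot. This is the classical Crowell--Goeritz fact: for any knot diagram, $|det(K)|$ equals the number of spanning trees of the black (Tait) graph, which follows because the Goeritz matrix is (up to sign and a choice of deleted row/column) a reduced Laplacian of $G_b$, and for an alternating diagram the Goeritz form presents $H_1$ of the double branched cover, whose order is $|det(K)|$ — and by the Matrix-Tree theorem this determinant is exactly $\kappa(G_b)$. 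I would cite this rather than reprove it. Substituting yields $|det(K)|\cdot|V(G_b)|\cdot|V(G_w)|$.

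The main obstacle I anticipate is not the Matrix-Tree computation, which is routine, but pinning down precisely the counting of roots in the bijection of Theorem~\ref{thm:dmfforest}: one must verify that in a rooted orthogonal spanning tree pair the black root and the white root are genuinely independent choices (unlike the adjacent/KPW case where they are linked), so that the factor is a clean product $(\text{rooted trees of }G_b) \times |V(G_w)|$ rather than something subtler. The cleanest way to see this is that a maximal admissible pKs is exactly a maximum matching on $\Gamma(D)$ with one unmatched black vertex $v_b$ and one unmatched white vertex $v_w$, with no monochromatic loop (Theorem~\ref{thm:dmfandstates}); the pair $(v_b, v_w)$ can be any pair of vertices of the respective colours for which such a matching exists, and once $v_b$ and the black tree $T$ are fixed, $T^\perp$ is forced but $v_w$ ranges over all of $V(G_w)$ because every vertex of a spanning tree can serve as a root. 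A small sanity check on the trefoil ($G_b$ a triangle, $\kappa = 3$, $|V(G_b)| = 3$, $|V(G_w)| = 2$, $|det| = 3$) confirms both formulas give the same number.
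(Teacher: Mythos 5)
Your proposal is correct and follows essentially the same route as the paper: Theorem~\ref{thm:dmfforest} reduces the count to triples (spanning tree of $G_b$, black root, white root), the Matrix-Tree theorem converts $|V(G_b)|\cdot\kappa(G_b)$ into $\prod_{i=2}^{n}\lambda_i$, and the alternating case invokes the classical identification of $|det(K)|$ with the number of spanning trees of the Tait graph. Your extra care in checking that the black and white roots are independent choices is a sensible elaboration of a point the paper's proof takes for granted.
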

\begin{proof}
By Kirchhoff's Matrix-Tree Theorem, there are $$\frac{1}{n}\prod_{i=2}^{n}\lambda_i$$ unrooted spanning trees in $G_b$. For each such tree, we have $|V(G_b)|$ choices for the root in $G_b$ and $|V(G_w)|$ for the choice of root in $G_w$, yielding the desired expression. 

If $D$ is a diagram of an alternating knot, then $|det(K)|$ coincides with the number of (unrooted) spanning trees in either $G_b$ or 
$G_w$~\cite{BZknots}. 
\end{proof}

In addition to enumerating the possible spanning trees, we can explicitly list them using the \emph{symbolic Laplacian matrix $L^{symb} (G)$}. $L^{symb}_{i,i}$ is the formal sum of the edges incident to the vertex $v_i$ and $L^{symb}_{i,j}$ is the negative formal sum of all edges connecting $v_i$ to $v_j$, for $i\neq j$. 

Let $p^{symb}_G(t, E(G))$ denote the characteristic polynomial $det(L^{symb} (G) - t\cdot \text{Id})$, which is in $\Z [t, e_1, \ldots, e_m]$, where $E(G)=\{e_1,\ldots,e_m\}$. The coefficient of $t^k$ is (up to an alternating sign) the formal sum of all possible spanning forests in $G$ with $m-k$ edges, counted with the possible ways of rooting them. In other words, each monomial in this coefficient consists of a formal product of the edges contained in a single forest, multiplied by the number of possible roots. 
\bigskip

Using the symbolic Laplacian for both the black and white graphs, we can count the total number of discrete Morse functions on $\Xi(D)$:
\begin{proposition}\label{prop:countdmfs}
Let $\#\mathcal{M}(D)$ denote the number of dMfs on $\Xi(D)$. Then we can compute $\#\mathcal{M}(D)$ by considering the product
\begin{equation}\label{eqn:counting}
p_{G_b^*}^{symb} (-1,E(G_b)) \cdot p_{G_b}^{symb} (-1,E(G_b^*)) \in \faktor{\Z\left[ E(G_b),E(G_b^*)\right]}{\langle e_i \cdot e_i^*\rangle_{i = 1, \ldots , |E(G_b)|}}
\end{equation}
and evaluating all variables $e_i$ and $e_i^*$ in $1$ for $i = 1, \ldots, m$ for a minimal representative.
\end{proposition}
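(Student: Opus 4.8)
The plan is to build on Theorem~\ref{thm:dmfforest}, which identifies dMfs on $\Xi(D)$ with pairs of rooted orthogonal spanning forests $(H^b_x, H^w_x)$ in $G_b$ and $G_w$ respectively (recall $G_w = G_b^*$). So counting dMfs amounts to counting such orthogonal pairs, weighted by the number of ways of rooting each forest. The key tool is equation~\eqref{eqn:forestcoeff} in its symbolic refinement: the coefficient of $t^k$ in $p^{symb}_{G}(t, E(G))$ is, up to sign, the formal sum over all spanning forests of $G$ with $|E(G)| - k$ edges, each recorded as the product of its edges and multiplied by its number $\rho$ of rootings. First I would evaluate $p^{symb}_{G_b}(t, E(G_b^*))$ and $p^{symb}_{G_b^*}(t, E(G_b))$ at $t = -1$; the evaluation at $-1$ collapses the alternating sign so that every forest appears with a genuinely positive coefficient, and the resulting elements of $\Z[E(G_b)]$ and $\Z[E(G_b^*)]$ are honest generating functions $\sum_F \rho(F)\, \mathbf{e}_F$ over all spanning forests of the two graphs, where $\mathbf{e}_F$ denotes the monomial in the edges of $F$.

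Next I would form the product of these two generating functions inside the quotient ring $\Z[E(G_b), E(G_b^*)] / \langle e_i e_i^*\rangle$. A monomial $\rho(H^b)\rho(H^w)\, \mathbf{e}_{H^b}\mathbf{e}_{H^w}$ survives the quotient precisely when no edge $e_i$ of $H^b$ has its dual $e_i^*$ appearing in $H^w$ — which is exactly the orthogonality condition, since the edge of $G_b^*$ meeting a given crossing-edge $e_i$ of $G_b$ is $e_i^*$, and orthogonality says $H^b$ and $H^w$ never use dual edges. Thus the surviving terms are in bijection with orthogonal pairs of spanning forests, each carrying the weight $\rho(H^b)\rho(H^w)$, which by Theorem~\ref{thm:dmfforest} is the number of dMfs whose associated forest pair is $(H^b, H^w)$ (each rooting choice yielding one dMf). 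Summing these weights — i.e.\ specialising all edge variables $e_i, e_i^* \mapsto 1$ after reducing to a squarefree (``minimal'') representative so no monomial is lost — yields exactly $\#\M(D)$.

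The step I expect to be the main obstacle is making the bookkeeping airtight: one must check that the reduction modulo $\langle e_i e_i^*\rangle$ kills \emph{all and only} the non-orthogonal pairs, and in particular that no orthogonal pair is accidentally eliminated because $\mathbf{e}_{H^b}$ already contains a repeated variable or because the squarefree-representative step discards something meaningful. Since each spanning forest uses each edge at most once, each $\mathbf{e}_F$ is already squarefree, and the only products that can vanish in the quotient are those containing some $e_i e_i^*$; hence the reduction is clean, and passing to a ``minimal representative'' (a squarefree polynomial representing the same class) before evaluating at $1$ is well-defined and loses nothing. A secondary point to verify is that the correspondence of Theorem~\ref{thm:dmfforest} is counted correctly — that the number of dMfs mapping to a fixed unrooted orthogonal forest pair equals $\rho(H^b)\rho(H^w)$, with the roots ranging independently over the components of each forest — which follows directly from the construction in that theorem's proof (the root of each component is its unique unmatched vertex, and any choice of roots gives an admissible pKs with no monochromatic loop). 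Assembling these observations gives the stated formula.
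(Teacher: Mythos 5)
Your argument is correct and follows essentially the same route as the paper's proof: Theorem~\ref{thm:dmfforest} reduces the count to rooted orthogonal spanning forest pairs, the symbolic characteristic polynomial evaluated at $t=-1$ serves as a rooting-weighted generating function over spanning forests, and the quotient by $\langle e_i \cdot e_i^*\rangle$ enforces orthogonality before setting all variables to $1$. Your additional bookkeeping (squarefreeness of forest monomials, the weight $\rho(H^b_x)\rho(H^w_x)$ attached to each unrooted orthogonal pair) merely makes explicit details the paper leaves implicit.
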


Note that the evaluation of the polynomial in Equation \eqref{eqn:counting} is just the characteristic polynomial of the symbolic Laplacian (up to a multiplication by $t$, which only affects the result up to a factor of $-1$) for the disjoint union of the graph $G_b$ and its dual; here by disjoint union we mean that the two are to be thought of as being disjointly embedded in the plane. 
An alternative to evaluating in this quotient ring
is to consider the product $p_{G_b^*}^{symb} (-1,E(G_b)) \cdot p_{G_b}^{symb} (-1,E(G_b))$ in the same $m$ variables, discard all non square-free monomials, and then set all variables $E(G_b)$ equal to $1$.

\begin{proof}[Proof of Proposition~\ref{prop:countdmfs}]
Let us start by noting that the coefficients of the characteristic polynomial of the Laplacian of a graph alternate in sign, by Equation \ref{eqn:forestcoeff}, so evaluating in $t= -1$ just gives the sum of the absolute values of the coefficients; moreover, by Theorem~\ref{thm:dmfforest} the number of dMfs on $\Xi(D)$ coincides with number of rooted orthogonal spanning forests in $G_b$ and its dual.
Using the symbolic Laplacian we see that for a plane graph $G$, the evaluation $p^{symb}_G(-1, E(G)) \in \Z[E(G)]$ is a sum of monomials, each of which gives a rooted spanning forests in $G$, and whose coefficient is the number of possible rootings. In particular $p_G(-1,1, \ldots,1)$, 
is the number of rooted spanning forests in $G$.

Now, the product in Equation \eqref{eqn:counting} gives a sum of monomials in the variables $E(G_b)$ and $E(G_b^*)$. The condition $e_i \cdot e_i^* = 0$ is equivalent to the orthogonality of the forests.
\end{proof}

\begin{example}
Using Propositions~\ref{prop:countperfect} and~\ref{prop:countdmfs}, we can count the total number of dMfs for a simple infinite family of knot projections, obtained from the minimal diagrams $D_{2n+1}$ for the alternating torus knots $T_{2,2n+1}$, as in Figure \ref{fig:alternatingexample}.

Since the knot determinant of $T_{2,2n+1}$ is $2n+1$ \cite{rolfsen2003knots}, by Proposition~\ref{prop:perfectdMf} there are $2(2n+1)^2$ perfect discrete Morse functions on $\Xi(D_{2n+1})$.

We can use the symbolic Laplacian as in Proposition~\ref{prop:countdmfs} to generalise from perfect to general discrete Morse functions. The symbolic Laplacian matrices for the white and black graphs are respectively 
$$L^{symb}_w = \begin{pmatrix}
E & -E\\
-E & E
\end{pmatrix}$$
where $E = \sum_{i = 1}^{2n+1} e_i$, and
$$L^{symb}_b = \begin{pmatrix}
e_1^* + e_{2n+1}^* & -e_1^* & 0 & \ldots & 0 & -e_{2n+1}^*\\
-e_1^* & e_1^* + e_{2}^* & -e_2^* & \ldots & 0 & 0\\
0 & -e_2^* & e_2^* + e_{3}^* & \ldots & 0 & 0\\
\vdots & 0 &  \ddots & \ddots & 0 & -e_{2n}^* \\
-e_{2n+1}^* & 0 & \ldots & 0 & -e_{2n}^* & e_{2n}^* + e_{2n+1}^* 
\end{pmatrix}$$
The first symbolic characteristic polynomial is easily determined to be $p^{symb}_{G_w} = t(t -2E)$. So,
\begin{align}
    \#\mathcal{M}(D_{2n+1}) &= \restr{\left( (1 + 2E)\cdot p^{symb}_{G_b}(-1, e_1^*,\ldots, e_{2n+1}^*) \right)}{e_i = 1, e_i^* = 1}\nonumber \\
    &= p^{symb}_{G_b}(-1, 1,\ldots, 1) + \restr{2 \left(\sum_{i = 1}^{2n+1} e_i\cdot p^{symb}_{G_b}(-1, e_1^*,\ldots, e_{2n+1}^*) \right) }{e_i = 1, e_i^* = 1}\label{eqn:countegaim}.
\end{align}
Recall that all the products in Equation~\eqref{eqn:countegaim} are computed in the quotient polynomial ring $\faktor{\Z\left[ E(G_b),E(G_b^*)\right]}{\langle e_i \cdot e_i^*\rangle_{i = 1, \ldots , |E(G_b)|}}$.

Using Theorems $1$ and $2$ from \cite{chebotarev2008spanning} (which provide expressions for the number of rooted spanning forests in path and cycle graphs in terms of Fibonacci numbers) we can see that $$p^{symb}_{G_b}(-1, 1,\ldots, 1) = \Phi_{4n+1} + \Phi_{4n+3} - 2$$ and $$p^{symb}_{G_b}(-1, 1,\ldots,0,1,\ldots 1) =  \Phi_{4n+2},$$ where $\Phi_i$ is the $i$-th Fibonacci number. The second expression gives us the first term in Equation \eqref{eqn:countegaim}. To evaluate the second term in Equation \eqref{eqn:countegaim}, first recall that we are counting orthogonal forests in $G_b$ and $G_w$. Hence, for each $e_i$ in the sum $\sum_{i = 1}^{2n+1} e_i$, we can consider the path graph obtained from $G_b$ by deleting the edge $e_i^*$. Thus, we can use the second expression above, which counts rooted forests in path graphs. That is, the latter evaluation coincides with the evaluation in $(1, \ldots, 1)$ of $p^{symb}_{G_b}(-1, e_1^*, \ldots, e_{2n+1}^*) \cdot e_i$ in the quotient ring of Equation~\eqref{eqn:counting} for some edge $e_i \subset E(G_w)$ (as the polynomial is symmetric, we get the same result for any edge $e_i$). Putting all together we obtain this aesthetically pleasing result:
\begin{equation*}
\#\mathcal{M}(D_{2n+1}) = \Phi_{4n+1} + \Phi_{4n+3} + (4n+2)\Phi_{4n+2} - 2 = 2\Phi_{4n+1} + (4n+3)\Phi_{4n+2} -2.
\end{equation*}
\end{example}

\section{The Click-Clock theorem}\label{sec:clickclock}

In this section we generalise Kauffman's Clock Theorem to perfect admissible matchings, by introducing two new moves between partial Kauffman states. We also characterise the image of the KPW correspondence in terms of dMfs in Proposition~\ref{prop:clickpathmove}.

From this point onwards we need an extra assumption on the projection of the knot diagrams considered, namely that they are \emph{reduced}. This just means that the situation in Figure~\ref{fig:reduced} is not allowed; in knot theory, crossings like those shown in Figure~\ref{fig:reduced} are referred to as \emph{nugatory crossings}. This implies at once that both the black and white graphs are $2$-connected, meaning that they can not be disconnected by removing a single vertex.

\begin{figure}[ht]
  \centering
\includegraphics[width=6cm]{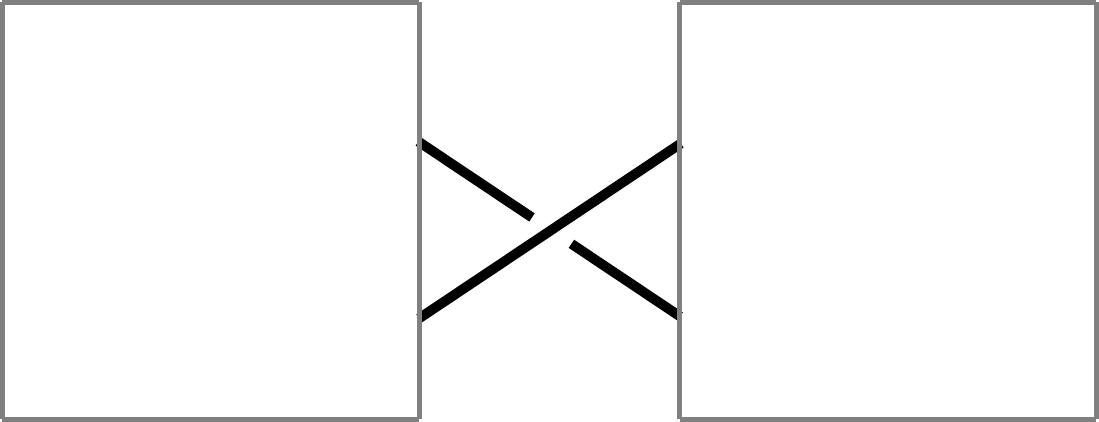}
\caption{A knot diagram is reduced if it does not contain any such configuration; the two boxes represent portions of the diagram.}
\label{fig:reduced}
\end{figure}

\begin{definition}\label{def:}
Consider two partial Kauffman states $x, x^\prime \in \widetilde{X}(D)$. A \emph{click loop move} from $x$ to $x'$ consists of altering $x$ on a single monochromatic loop on which $x$ and $x'$ disagree such that $x'$ and the modified $x$ agree on the loop. We say that $x^\prime$ and $x$ are \emph{click loop equivalent} if they both support the same monochromatic loops, and they coincide everywhere except on at least one of these loops (where they induce opposite orientations). 
\end{definition}

So, two click loop equivalent pKs are related by a finite number of click loop moves. 

\begin{definition}
Consider two maximal partial Kauffman states $x \in \widetilde{X} (D,v_w,v_b)$ and $x^\prime \in \widetilde{X} (D,v_w,v^\prime_b)$; we say that $x$ and $x^\prime$ differ by a \emph{click path move} $\rho \subset E(G_b)$ if $x$ and $x^\prime$ induce the same unrooted tree on $G_b$, $\rho$ is the unique black path in the branch of the tree determined by connecting the root $v^\prime_b$ to $v_b$, and the two partial states coincide everywhere except on $\rho$ (see Figure~\ref{fig:clickpath}). The same can be done if the two critical points differ only in the white graph. 
\end{definition}

\begin{figure}[ht]
  \centering
\includegraphics[width=7cm]{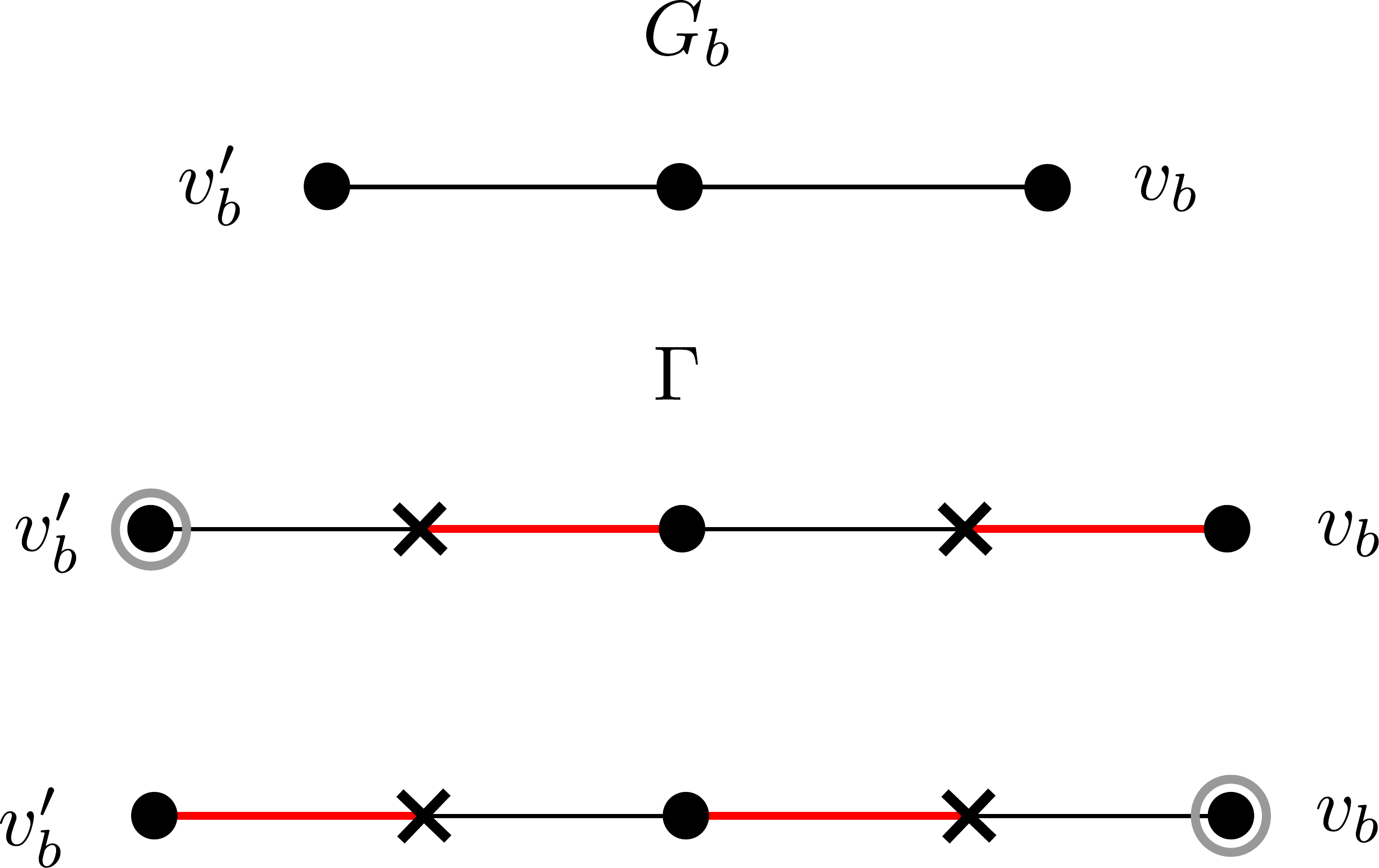}
\caption{At the top, a path $\rho$ of length 2 connecting the two roots in $G_b$, and on the bottom, the click path move associated with $\rho$ acting on the the two pKs. The two respective roots are circled in grey.}
\label{fig:clickpath}
\end{figure}

The result below characterises the image of the KPW map as the space of perfect acyclic matchings. In other words, the injection guaranteed by KPW's correspondence becomes a bijection if we restrict the codomain to perfect dMfs.

\sloppy
\begin{proposition}\label{prop:clickpathmove}
Click path moves induce bijections between perfect dMfs on $\Gamma(D,v_w,v_b)$ and perfect dMfs on $\Gamma(D,v_w^\prime,v_b^\prime)$ for any pair of black\slash white vertices. Moreover, at most two such moves are always sufficient.
\end{proposition}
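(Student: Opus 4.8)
The plan is to prove the statement in two parts: first that a single click path move gives a well-defined bijection between the perfect dMfs with one prescribed pair of unmatched vertices and those with another pair differing in only one coordinate (say the black vertex changes from $v_b$ to $v_b'$ while $v_w$ stays fixed), and second that any change of unmatched pair $(v_b,v_w)\to(v_b',v_w')$ can be realised by at most two such moves — one in the black graph followed by one in the white graph.

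\textbf{Step 1: the move is well-defined and a bijection in one coordinate.} Fix $v_w$ and two black vertices $v_b, v_b'$. Let $x$ be a perfect dMf on $\Gamma(D,v_w,v_b)$. By Theorem~\ref{thm:dmfforest} (applied to the maximal case) $x$ corresponds to a pair of orthogonal rooted spanning trees $(T\subset G_b, T^\ast\subset G_w)$, with $T$ rooted at $v_b$ and $T^\ast$ rooted at $v_w$. Since $G_b$ is $2$-connected (here I use the \emph{reduced} hypothesis carried over from Section~\ref{sec:clickclock}), $T$ is a spanning tree containing both $v_b$ and $v_b'$, so there is a unique path $\rho$ in $T$ from $v_b'$ to $v_b$. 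The click path move re-roots $T$ at $v_b'$: concretely, reverse the orientation of every edge of $\rho$, leaving every other oriented edge of $T$ and all of $T^\ast$ untouched. I would check that (i) the underlying unrooted trees $T$ and $T^\ast$ are unchanged, so orthogonality is preserved, and (ii) the new orientation is still "flow away from the root $v_b'$" — this is the standard fact that re-rooting a tree only flips the edges on the path between old and new roots. Hence the output is again a pair of orthogonal rooted spanning trees, which by Theorem~\ref{thm:dmfforest} is a perfect dMf on $\Gamma(D,v_w,v_b')$. The inverse move is the click path move along the same path $\rho$ traversed from $v_b$ to $v_b'$, so the map is a bijection. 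The key point to verify carefully is that $\rho$ depends only on the unrooted tree (not on the chosen dMf beyond its underlying tree) and that the description "coincide everywhere except on $\rho$" in the Definition matches the orientation-reversal description — i.e.\ that flipping the edges of $\rho$ is exactly what changing the associated partial Kauffman state on $\rho$ does, via the crossing-to-target-of-oriented-edge rule recalled before Figure~\ref{fig:treekauffstate}.

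\textbf{Step 2: at most two moves suffice.} Given arbitrary pairs $(v_b,v_w)$ and $(v_b',v_w')$, first apply the black click path move taking a perfect dMf on $\Gamma(D,v_w,v_b)$ to one on $\Gamma(D,v_w,v_b')$ (Step 1), then apply the white click path move — the mirror construction using $T^\ast\subset G_w$ and the unique white path from $v_w'$ to $v_w$ in $T^\ast$ — taking it to a perfect dMf on $\Gamma(D,v_b',v_w')$. Composition of two bijections is a bijection, and at each stage the relevant graph is still $2$-connected so the needed path exists; if $v_b=v_b'$ or $v_w=v_w'$ one of the two moves is trivial, so "at most two" is sharp. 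I should remark that the composite is independent of the order (black-then-white vs.\ white-then-black) because the two moves act on disjoint parts of the data (edges of $G_b$ versus edges of $G_w$), though this is not strictly needed for the statement.

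\textbf{Main obstacle.} The genuinely delicate point is not the tree combinatorics but the bookkeeping identifying a "click path move" as defined (an operation on partial Kauffman states / matchings on $\Gamma(D)$) with re-rooting on the black spanning tree, and confirming that the result still lies in the image of $\widetilde{X}(D,v_w,v_b')$ — i.e.\ that no component of the new pKs sits in the forbidden region $v_b'$ and that $v_b$ now \emph{is} covered. This is exactly the assertion, used already in Theorem~\ref{thm:dmfforest}'s proof and in the discussion around Figure~\ref{fig:treekauffstate}, that in a rooted oriented tree the forbidden vertex is "only ever the source of an edge"; re-rooting makes $v_b'$ the unique new source and $v_b$ acquires an incoming edge. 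Making this transition rigorous — and checking acyclicity is automatic because the new matching still comes from a pair of forests, so Theorem~\ref{thm:dmfandstates}/Corollary~\ref{cor:dmfconnectedJ} applies — is where the real work lies, but it is routine once the dictionary of Theorem~\ref{thm:dmfforest} is in hand.
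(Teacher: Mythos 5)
Your argument is correct, and it follows the same skeleton as the paper's proof (take the black spanning tree supported by $x$, use the unique tree path $\rho$ between the old and new root, and observe that one black move plus one white move handles an arbitrary change of the pair of unmatched vertices), but it differs in how the crucial point — that the output is still a dMf — is verified. The paper argues this directly at the level of matchings: it shows, by a local configuration analysis (Figure~\ref{fig:clickpathnoloop}), that a click path move can neither create nor destroy a supported monochromatic loop, and then invokes Theorem~\ref{thm:dmfandstates}. You instead route everything through the dictionary of Theorem~\ref{thm:dmfforest}: a perfect dMf is a pair of orthogonal rooted spanning trees, a click path move is exactly re-rooting the black tree (flipping the orientations along $\rho$, which under the crossing-to-target rule changes the pKs precisely on $\rho$), and acyclicity is then automatic because the new matching is again induced by a pair of trees. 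This makes the bijectivity and the inverse move transparent, and it also records explicitly that the unrooted trees are unchanged, a fact the paper uses later (Lemma~\ref{lem:sametrail}, Corollary~\ref{cor:alldmfsfromKPW}); the cost is that you must (and do) carefully check the identification of the click path move with re-rooting, which is where the paper's local argument sits instead. One small inaccuracy: you do not need $2$-connectedness of $G_b$ (i.e.\ the reduced hypothesis) for the path $\rho$ to exist — $T$ is a spanning tree of the connected graph $G_b$, so it contains both $v_b$ and $v_b'$ and a unique path between them in any case; the remark is harmless but superfluous.
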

\fussy 
\begin{proof}
First of all, let us note that a click path move does not change the maximality of the matching. It then suffices to prove the existence of a bijection for $X(D,v_w,v_b)$ and $X(D,a)$, where $a$ is an arc in $D$ separating the two regions $v_w$ and $v_b^\prime$. Fix $x \in X(D,v_w,v_b)$, and denote by $T_x$ the black spanning tree it supports. Call $\rho$ the unique path in $T_x$ connecting the root $v_b$ to $v_b^\prime$. A click path move along $\rho$ transforms $x$ into a matching $x^\prime \in X(D,a)$ (again supporting $T_x$ as a spanning tree). We can then conclude by observing that a click path move does not introduce or remove any monochromatic loops supported by either $x$ or $x^\prime$ (see Figure \ref{fig:clickpathnoloop}); the only way this would be possible is if a supported loop could be adjacent to a path along which we can apply a click path move. However, we cannot obtain such a configuration without introducing an edge in the matching incident to two crossings.
Hence, these moves preserve the acyclicity of maximal pKs by Theorem~\ref{thm:dmfandstates}.

\begin{figure}[ht]
  \centering
\includegraphics[width=9cm]{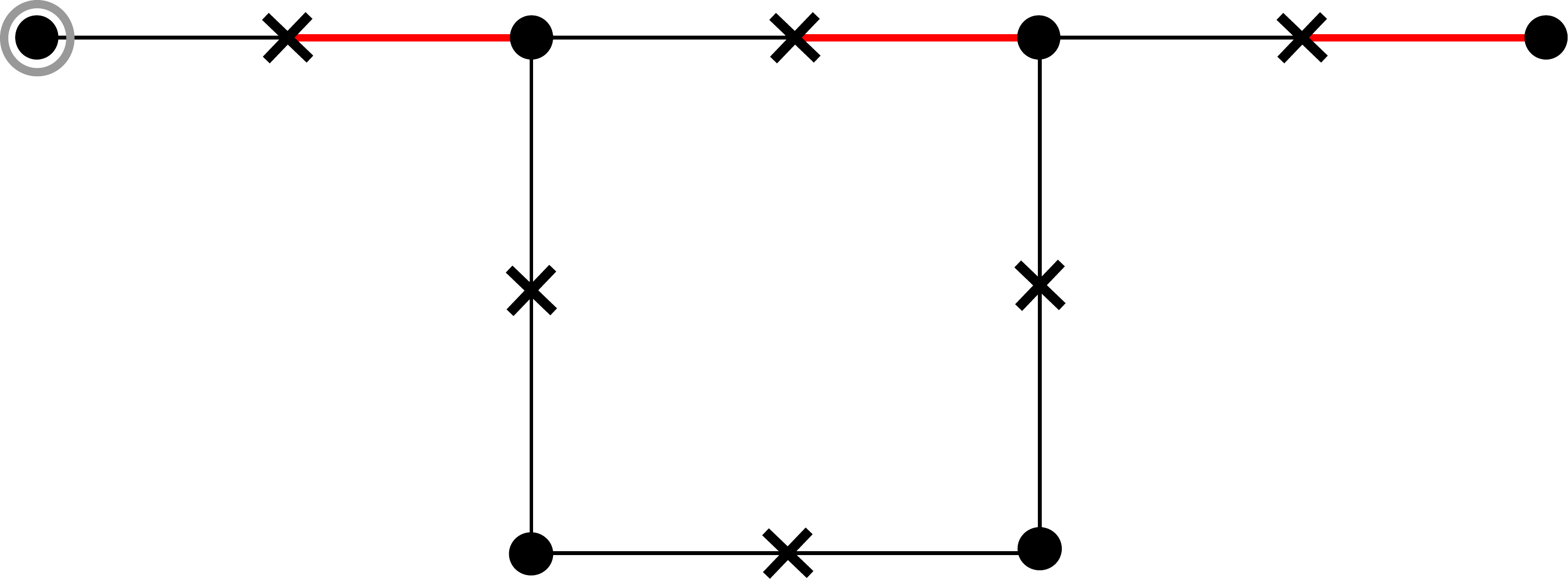}
\caption{A click path move cannot introduce or remove any monochromatic loops supported by a perfect dMf; an example of the contradiction in the proof of Proposition~\ref{prop:clickpathmove}.}
\label{fig:clickpathnoloop}
\end{figure}

The general statement then follows easily by noting that a single click path move along a black\slash white path suffices to change the two unmatched vertices of any perfect dMf on $\Xi(D)$.
\end{proof}

\begin{corollary}\label{cor:alldmfsfromKPW}
There is a bijection between the union of all elements in the image of the KPW correspondence over all pairs $(v_b,v_w) \in G_b \times G_w$ and perfect dMfs on $\Xi(D)$.
\end{corollary}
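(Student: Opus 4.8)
The plan is to deduce Corollary~\ref{cor:alldmfsfromKPW} by assembling Theorem~\ref{thm:dmfforest}, Theorem~\ref{thm:jordantrails} and Proposition~\ref{prop:clickpathmove}, carefully matching up the maps involved; no essentially new argument should be required. First I would record, using Theorem~\ref{thm:dmfforest} in the maximal case, that perfect dMfs on $\Xi(D)$ are in bijection with triples $(T,v_b,v_w)$ with $T\in Tree(G_b)$, $v_b\in V(G_b)$, $v_w\in V(G_w)$ (the dual tree $T^\ast\subset G_w$ being determined by $T$), and that under this bijection the pair $(v_b,v_w)$ is exactly the pair of critical cells. Consequently the set of all perfect dMfs on $\Xi(D)$ is the \emph{disjoint} union, over all pairs $(v_b,v_w)$, of the perfect dMfs whose unmatched black and white vertices are $v_b$ and $v_w$; and such a dMf is the same thing as an acyclic maximal pKs, i.e.\ a perfect matching, on $\Gamma(D,v_b,v_w)$.

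Next I would check that for every pair $(v_b,v_w)$ the KPW map of Theorem~\ref{thm:KPW} is a bijection from $Tree(G_b)$ onto this set of perfect dMfs. When $v_b,v_w$ are adjacent, with $a$ the separating arc, the KPW map is Kauffman's map, hence a bijection onto $X(D,a)$ by Theorem~\ref{thm:jordantrails}; and $X(D,a)$ is precisely the set of perfect dMfs with critical cells $v_b,v_w$, since every maximal pKs on $\Gamma(D,a)$ is a Kauffman state (the number of crossings equals the number of non-forbidden regions) and induces a perfect dMf by Proposition~\ref{prop:perfectdMf}. For a general pair $(v_b,v_w)$, fixing such an $a$, Proposition~\ref{prop:clickpathmove} furnishes a bijection, realised by click path moves, between the perfect dMfs on $\Gamma(D,a)$ and those on $\Gamma(D,v_b,v_w)$; the key point is that this bijection intertwines the two KPW maps, because a click path move preserves the underlying unrooted black\slash white tree and merely slides a root along a tree path, which is exactly the effect of changing the root in the rooted-tree-to-matching construction of Figure~\ref{fig:treekauffstate} (cf.\ Figure~\ref{fig:clickpath}). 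Composing, the KPW map for $(v_b,v_w)$ is a bijection onto the perfect dMfs with critical cells $v_b,v_w$.

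Finally I would take the union over all pairs. The union of the images is then the disjoint union of the sets of perfect dMfs on $\Gamma(D,v_b,v_w)$, which by the first paragraph is the set of \emph{all} perfect dMfs on $\Xi(D)$; and the induced map from $\bigsqcup_{(v_b,v_w)}Tree(G_b)$ to perfect dMfs is injective, since the two unmatched vertices of an image recover $(v_b,v_w)$ and the KPW map is injective for each fixed pair. This yields the asserted bijection, and in particular recovers the statement in the introduction that each perfect dMf arises from a unique choice of spanning tree together with a unique vertex of each colour.

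The only delicate point — and it is more a matter of unwinding definitions than a genuine obstacle — is the middle step: verifying that the generalised KPW construction coincides on the nose with the rooted orthogonal spanning tree construction underlying Theorem~\ref{thm:dmfforest}, so that moving the two roots is implemented by click path moves and the relevant square of maps commutes. Everything else is bookkeeping with correspondences already established.
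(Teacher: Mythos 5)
Your proposal is correct, but it takes a noticeably different and longer route than the paper. The paper's proof is a two-step parameterisation: an element in the image of the KPW correspondence is uniquely determined by its triple $(T,v_b,v_w)$ (injectivity of the KPW map for each fixed pair from Theorem~\ref{thm:KPW}, together with the fact that the unmatched vertices recover the pair), and Theorem~\ref{thm:dmfforest} puts such triples in bijection with perfect dMfs on $\Xi(D)$; composing these gives the asserted bijection abstractly, with no need to show that image elements are themselves dMfs and, in particular, with no appeal to Proposition~\ref{prop:clickpathmove} or to the Clock Theorem. You instead prove the stronger, on-the-nose statement that for every pair $(v_b,v_w)$ the KPW map is a bijection from $Tree(G_b)$ onto the perfect dMfs with critical cells $v_b,v_w$ -- via Kauffman's bijection and Proposition~\ref{prop:perfectdMf} in the adjacent case, and via the click-path bijection of Proposition~\ref{prop:clickpathmove} (which preserves the supported tree and only slides roots) in general -- and then take the disjoint union over pairs. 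What your route buys is exactly the interpretation emphasised in the introduction and before Proposition~\ref{prop:clickpathmove}: the image of KPW consists precisely of perfect dMfs and KPW itself realises the bijection. What it costs is the ``delicate point'' you flag, namely that the generalised KPW construction coincides with the rooted orthogonal spanning tree construction underlying Theorem~\ref{thm:dmfforest}; this is indeed only an unwinding of the KPW definition, and once granted, the intertwining with click path moves follows from the uniqueness of a perfect matching supporting a given spanning tree with prescribed unmatched vertices. One phrasing caution: an acyclic maximal pKs avoiding $v_b,v_w$ is a perfect matching on $\Gamma(D,v_b,v_w)$, but the converse fails for non-adjacent pairs, so the ``i.e.'' in your first paragraph should not be read as an equality of those two sets (your subsequent use of Proposition~\ref{prop:clickpathmove} shows you did not intend it that way).
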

\begin{proof}
Each perfect matching on $\Gamma(D)$ which is in the image of the KPW
correspondence is uniquely determined by the triple $(T,v_b,v_w)$, with $T \in Tree(G_b)$. By Theorem \ref{thm:dmfforest}, each such triple is in bijective correspondence with a perfect discrete Morse function on $\Xi(D)$.
\end{proof}

Neither click loop moves nor click path moves change the Jordan trails; clock moves and click loop moves do not change the position of the critical points. In fact, when perfect discrete Morse functions differ only by click path moves, we have the following result (\emph{cf.} also Lemma~\ref{lem:samejordan}).

\begin{lemma}\label{lem:sametrail}
Two perfect dMfs on $\Xi(D)$ share the same Jordan trail if and only if they support the same unrooted spanning trees.
\end{lemma}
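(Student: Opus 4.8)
The plan is to connect three objects associated to a perfect dMf on $\Xi(D)$: the perfect matching $x$, the rooted orthogonal spanning tree pair $(T_x \subset G_b, T_x^\perp \subset G_w)$ guaranteed by Theorem~\ref{thm:dmfforest}, and the Jordan trail $J(x)$. The key structural fact, already illustrated in Figure~\ref{fig:treekauffstate}, is that the Jordan trail of a Kauffman state is precisely the boundary of a regular neighbourhood of the black spanning tree $T_x$ in $S^2$ (equivalently, of the white tree $T_x^\perp$); resolving the crossings according to $x$ exactly traces out the frontier of a thickening of $T_x$. So my first step would be to make this neighbourhood description precise: given a perfect dMf $x$ with associated black tree $T_x$, show $J(x) = \partial N(T_x)$ for a regular neighbourhood $N(T_x)$, by checking locally at each crossing that the resolution rule in Figure~\ref{fig:singleresolvecrossing} agrees with "follow the boundary of the thickened tree" — a crossing on an edge of $T_x$ gets resolved one way (the trail runs alongside that edge, on both sides), a crossing not on $T_x$ (hence on $T_x^\perp$) gets resolved the complementary way.

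With that in hand, the lemma becomes almost formal. For the ``if'' direction: if two perfect dMfs $x, x'$ support the same unrooted spanning tree $T = T_x = T_{x'}$, then by the neighbourhood description $J(x) = \partial N(T) = J(x')$, so they have the same Jordan trail. (This is consistent with the earlier remark that click path moves — which by Proposition~\ref{prop:clickpathmove} relate exactly the perfect dMfs on different $\Gamma(D,v_w,v_b)$ sharing the same unrooted tree — do not change Jordan trails.) For the ``only if'' direction: suppose $J(x) = J(x')$. A Jordan trail is a simple closed curve on $S^2$, so its complement is two discs; the black regions lie in one disc and the white regions in the other, and the crossings lie on the curve. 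The tree $T_x$ is recovered from $J(x)$ as the black graph restricted to those crossings at which the resolution ``separates'' the trail alongside a black edge — equivalently, $T_x$ is the unique spanning tree of $G_b$ whose edges are the crossings $c$ such that the two local strands of $J(x)$ at $c$ run parallel to the black edge through $c$ rather than the white one. Since this reconstruction depends only on $J(x)$, not on the roots, $J(x) = J(x')$ forces $T_x = T_{x'}$ as unrooted trees.

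The cleanest way to package the ``only if'' direction, and probably the step I would actually write out in detail, is this: the set of crossings resolved ``the black way'' by a Kauffman state $x$ is exactly $E(T_x)$, and this set is visibly determined by $J(x)$ alone — at each crossing the resolution is a local picture of two arcs, and whether those arcs are parallel to the black or the white edge through that crossing is read off from $J(x)$ together with the fixed chequerboard colouring. So the unrooted tree is a function of the Jordan trail. The main obstacle is making the ``resolution is parallel to the black edge'' criterion rigorous and colouring-independent: I need to verify that the two possible smoothings at a crossing correspond bijectively and canonically to ``runs alongside the black half-edges'' versus ``runs alongside the white half-edges,'' using the orientation conventions of Figure~\ref{fig:singleresolvecrossing}, and that the collection of black-smoothed crossings of an actual Kauffman state is always a spanning tree (not merely a spanning subgraph) — but the latter is exactly the content of the bijection in Theorem~\ref{thm:jordantrails} / Theorem~\ref{thm:dmfforest}, so I may invoke it. Everything else is a matter of assembling these observations.

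\begin{proof}[Proof of Lemma~\ref{lem:sametrail}]
Let $x$ be a perfect dMf on $\Xi(D)$ and let $T_x \subset G_b$ be the black spanning tree it supports, with $T_x^\perp \subset G_w$ its orthogonal (dual) spanning tree, as provided by Theorem~\ref{thm:dmfforest}. We first observe that $J(x)$ is the boundary of a regular neighbourhood $N(T_x)$ of $T_x$ in $S^2$. Indeed, the resolution rule of Figure~\ref{fig:singleresolvecrossing} is local at each crossing $c$, and $c$ is an edge either of $T_x$ or of $T_x^\perp$ (never both, by orthogonality, and always one, since $T_x$ and $T_x^\perp$ together use every crossing). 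If $c \in E(T_x)$, the resolution produces two arcs running parallel to the black half-edges at $c$, i.e.\ alongside the edge of $T_x$ through $c$; if $c \in E(T_x^\perp)$, the resolution produces two arcs parallel to the white half-edges at $c$, i.e.\ the trail ``jumps over'' the black edge, separating the two black regions meeting at $c$ exactly as the frontier of $N(T_x)$ does. Gluing these local pictures along the edges of $G_b$, the resulting closed curve is precisely $\partial N(T_x)$; compare Figure~\ref{fig:treekauffstate}. Since $T_x$ is a tree, $\partial N(T_x)$ is a single simple closed curve, which re-proves that Jordan trails of Kauffman states are connected.

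Now suppose $x$ and $x'$ are perfect dMfs on $\Xi(D)$ supporting the same unrooted spanning tree, $T_x = T_{x'} =: T$. Then $N(T_x) = N(T_{x'})$ up to isotopy, so $J(x) = \partial N(T) = J(x')$; the two dMfs share the same Jordan trail. Conversely, suppose $J(x) = J(x')$. The assignment $c \mapsto$ ``does $J$ run parallel to the black edge or the white edge through $c$'' depends only on the curve $J$ together with the fixed chequerboard colouring of $D$; hence the set of crossings at which $J(x)$ runs parallel to a black edge equals the set at which $J(x')$ does. By the previous paragraph, the first of these sets is $E(T_x)$ and the second is $E(T_{x'})$, so $E(T_x) = E(T_{x'})$, that is, $x$ and $x'$ support the same unrooted spanning tree of $G_b$.
\end{proof}
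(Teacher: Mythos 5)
Your proof is correct and follows essentially the same route as the paper: a local analysis at each crossing showing that the choice of smoothing is determined by (and determines) whether the black edge through that crossing lies in the supported black spanning tree, which is exactly the dichotomy of Figure~\ref{fig:2caseslemma} used in the paper's argument. Your packaging of this as $J(x)=\partial N(T_x)$ is just a topological rephrasing of the paper's observation that the Jordan resolution is a neighbourhood of both trees, so there is no substantive difference.
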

\begin{proof}
\begin{figure}[ht]
  \centering
\includegraphics[width=6cm]{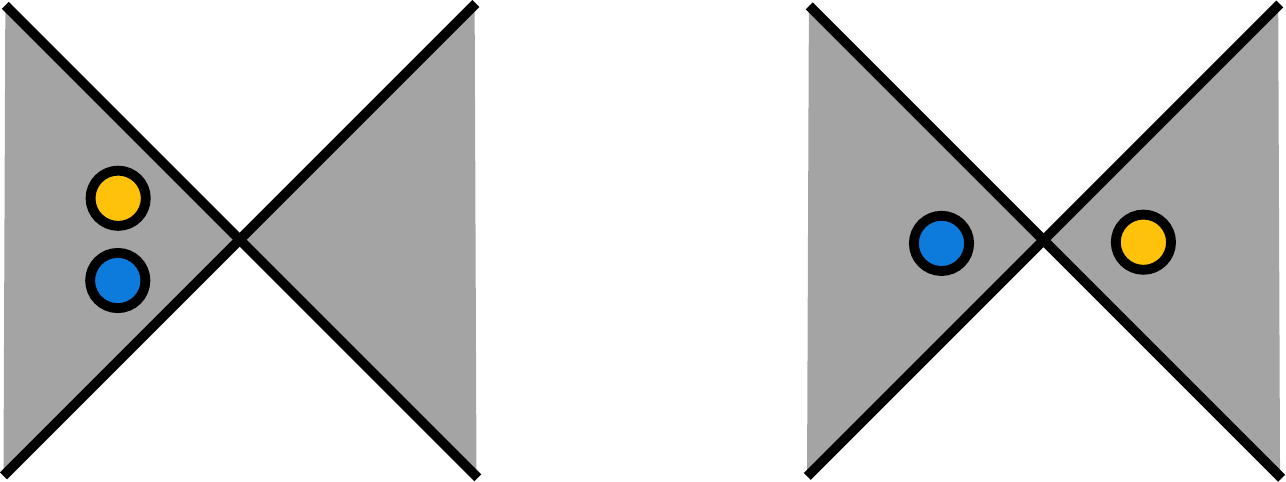}
\caption{The two possibilities at each crossing: the two matchings either coincide or are on opposite sides of the crossing.}
\label{fig:2caseslemma}
\end{figure}
Let $x$ and $x^\prime$ be two perfect dMfs such that $J(x) = J(x^\prime)$. Then at each crossing we must have one of the two possibilities shown in Figure~\ref{fig:2caseslemma}, where the dots are, without loss of generality, in the black regions; in both cases we immediately see that the black edge between the two regions in figure must belong to the subgraph of the black graph supported by the dMfs; by Theorem~\ref{thm:dmfandstates} this subgraph can not contain cycles, thus it is a tree. 

Conversely, if an edge of the black graph belongs to the common spanning tree, than on the corresponding crossing we are in the situation of Figure~\ref{fig:2caseslemma}, and hence the two dMfs have the same Jordan trail.
\end{proof}

\begin{lemma}\label{lem:changeby2}
Given a perfect admissible matching $x$ on $\Gamma(D)$, a clock move can either leave $|J(x)|$ unchanged or change it by $\pm 2$.
\end{lemma}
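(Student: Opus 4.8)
The plan is to analyse locally what a clock move does to the Jordan resolution. Recall that a clock move modifies a perfect admissible matching $x$ on exactly two crossings, say $c$ and $c'$, lying in a square of $\Gamma(D)$ configuration as in Figure~\ref{fig:clockmove}; before the move the two components sit in one pair of opposite regions of that square, and after the move they sit in the other pair. So first I would fix notation for the local picture: let $c$ and $c'$ be the two crossings involved, and let the four regions appearing in the clock-move square be labelled so that before the move the dots are in regions $A$ and $B$ and after the move they are in regions $C$ and $D$, where $A,C$ are the ``black'' choices and $B,D$ the ``white'' choices (or some consistent local convention).

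Next I would compare the two Jordan resolutions $J(x)$ and $J(x')$. They agree away from a small neighbourhood $N$ of the two crossings $c,c'$, since $x$ and $x'$ agree on every other crossing and the resolution at a crossing depends only on which of its regions is dotted. Inside $N$ the resolution consists of two short arcs (one from resolving $c$, one from resolving $c'$) joining four boundary points of $N$; the clock move swaps which pairing of these four boundary points is realised. So $J(x')$ is obtained from $J(x)$ by a single ``smoothing swap'' on a tangle with four endpoints. This is exactly the classical local move on a collection of disjoint circles in $S^2$: replacing $)(\,$ by $\asymp$ inside a disc. A standard fact — which I would either cite or prove in two lines by looking at whether the four endpoints lie on one or two components of $J(x)$ outside $N$ — is that such a swap either merges two components into one, splits one component into two, or (if the four endpoints are configured so the swap reconnects within the same component set without changing the count, which one checks cannot decrease or increase by one) leaves the count unchanged. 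Concretely: the four endpoints of the tangle in $\partial N$ are connected up by $J(x)\setminus N$ into some pairing; combining this external pairing with either of the two internal pairings gives a $1$-manifold, and a parity/Euler-characteristic argument shows the two resulting numbers of circles differ by an even number, in fact by $0$ or $2$. Hence $|J(x')| - |J(x)| \in \{-2,0,+2\}$.

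The one genuine point to check is that the difference cannot be larger than $2$ in absolute value, i.e. that only the two components meeting $N$ can change their number. This is immediate: every Jordan cycle not passing through $N$ is literally unchanged, and at most two cycles of $J(x)$ pass through $N$ (since only two arcs of the resolution lie in $N$, touching at most $4\le$ two-per-arc endpoints), so the total count changes by at most $|\{0,1,2\} - \{0,1,2\}|$; combined with the parity statement this forces the change to be $0$ or $\pm 2$.

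The step I expect to be the main obstacle is making the parity/component-count argument clean and coordinate-free — specifically, ruling out a change of $\pm 1$. The cleanest route is the Euler-characteristic one: $N$ is a disc, $J(x)\cap N$ and $J(x')\cap N$ are each a pair of arcs with the same four endpoints on $\partial N$, so $(J(x)\setminus N)\cup(\text{arcs})$ is a closed $1$-manifold for either choice, and gluing back the fixed external part changes the number of boundary circles by an even number because each arc-gluing changes $\#\text{components}$ by $\pm1$ and we do an even number of gluings with a fixed external matching. I would phrase this as: $|J(x)| = |J(x')| \pmod 2$ when the external configuration is held fixed, together with the crude bound $||J(x)|-|J(x')||\le 2$, to conclude $||J(x)|-|J(x')|| \in \{0,2\}$, which is the claim.
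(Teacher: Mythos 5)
There is a genuine error in your local model of the move. A clock move rotates the state marker at \emph{each} of the two crossings $c,c'$ into an adjacent region, so it changes the smoothing of Figure~\ref{fig:singleresolvecrossing} at \emph{both} crossings; moreover, resolving a single crossing produces two arcs, not one. Hence the part of $J(x)$ inside a disc $N$ containing $c$, $c'$ and the arc of $D$ joining them consists of three arcs meeting $\partial N$ in six points (not two arcs and four points), and the passage from $J(x)$ to $J(x')$ is the simultaneous toggling of two smoothings, not a single swap of $)($ for $\asymp$ in a $4$-ended tangle. This matters: on $S^2$ a single such swap changes the number of circles by exactly $\pm 1$ (the external pairing of the four points is non-crossing, hence coincides with one of the two internal pairings, and the two fillings then give counts differing by one), so your model, analysed correctly, would contradict the lemma rather than prove it. Relatedly, the parity step is not sound as stated: gluing a pair of endpoints changes the number of components by $0$ or $-1$, not $\pm 1$, and with the external part held fixed two internal fillings on the same four endpoints can have circle counts of opposite parity --- external pairing $\{p_1p_2,p_3p_4\}$ glued to internal $\{p_1p_2,p_3p_4\}$ gives two circles, while glued to $\{p_1p_4,p_2p_3\}$ it gives one.

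Your overall strategy is salvageable, and once repaired it is arguably cleaner than the paper's argument: since a clock move toggles the smoothing at exactly two crossings, apply the classical fact twice --- each single toggle on $S^2$ either merges two Jordan cycles or splits one, changing $|J(x)|$ by exactly $\pm 1$ --- so the total change is a sum of two $\pm 1$'s, i.e.\ $0$ or $\pm 2$. The paper instead proves the lemma by enumerating all local configurations up to symmetry (Figure~\ref{fig:casesclock}); that enumeration also yields the Type I\slash II\slash III classification of clock moves that is used later in the proof of Lemma~\ref{lem:connectedjordan}, which the repaired counting argument alone does not provide.
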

\begin{proof}
This follows easily by trying out all possible cases, shown in  Figure~\ref{fig:casesclock}, which shows (up to symmetries) how clock moves change the Jordan resolution. The number in the middle indicates how $|J(x)|$ changes under the clock move, while the dashed line indicates how the local picture eventually joins up. We divide the three possible types depending on their action on the resolution. Type I is the only one needed in the Clock theorem, Type II can only occur for matchings that are not dMfs (as necessarily $|J(x)|>1$), and Type III moves are the only ones that can merge\slash split Jordan cycles.
\begin{figure}[ht]
  \centering
\includegraphics[width=9cm]{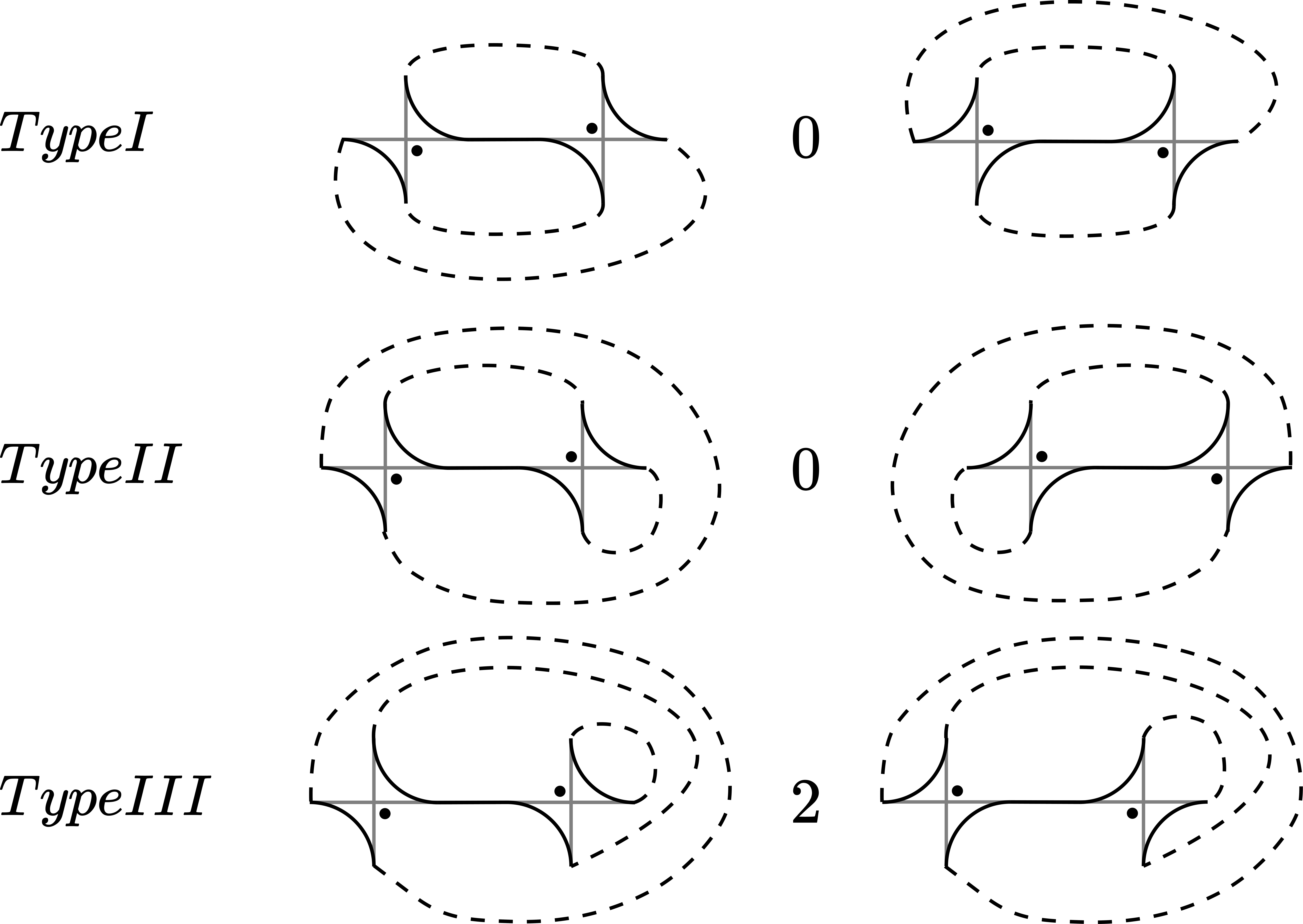}
\caption{All cases (up to symmetries) of how clock moves change the Jordan resolution.}
\label{fig:casesclock}
\end{figure}
\end{proof}

\begin{proposition}\label{prop:whichcomponents}
If $x \in \widetilde{X}(D)$ is a perfect and admissible matching, then each connected component of the black and white orthogonal subgraphs $H_x^b \subset G_b$ and $H_x^w \subset G_w$ it induces is either a tree or has the homotopy type of a circle. Furthermore, there is exactly one tree of each colour.
\end{proposition}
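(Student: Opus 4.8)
The plan is to analyse each connected component of $H_x^b$ (the argument for $H_x^w$ is symmetric) using an Euler-characteristic count together with the fact that a perfect matching pairs up essentially all the vertices of $\Gamma(D)$. First I would observe that, because $x$ is perfect and admissible on $\Gamma(D)$, every crossing is matched, so each crossing contributes an edge either to $H_x^b$ or to $H_x^w$; hence $|E(H_x^b)| + |E(H_x^w)| = n$, the number of crossings. Next, fix a connected component $C$ of $H_x^b$ sitting inside a disc or subregion of $S^2$; I would run the same planar Euler-characteristic bookkeeping as in the proof of Lemma~\ref{lem:unpaired}. Writing $b_C, c_C, w_C$ for the numbers of black vertices, crossings (edges of $C$), and white faces it bounds, and using that $x$ matches every black vertex in the interior with one of its incident crossings and every crossing of $C$ with a black endpoint, a counting argument shows that $C$ can carry at most one unmatched black vertex; if it carries none it must contain exactly one independent cycle, and if it carries exactly one it is a tree. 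This is exactly the dichotomy ``tree or homotopy type of a circle'' — a connected graph with $\beta_1 \le 1$ is either a tree or homotopy equivalent to $S^1$, and the cycle case happens precisely when the component supports a monochromatic loop.

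More concretely, I would argue as follows. By Theorem~\ref{thm:dmfandstates} the only obstruction to acyclicity of the induced matching is a supported monochromatic loop, and by Remark~\ref{rmk:critpoints} (equivalently the proof of Proposition~\ref{prop:perfectdMf}) any monochromatic loop $\gamma$ lies inside some component $C$ of $H_x^b$ and bounds, on the side not containing the global unmatched black vertex, a region in which the counting identity $1 + c^\circ = b^\circ + w^\circ$ of Lemma~\ref{lem:unpaired} forces an unmatched vertex. Since $x$ is perfect, there is only one unmatched black vertex in total, so $C$ can contain at most one monochromatic loop and no two distinct components can each contain one. A connected planar subgraph containing at most one cycle is either a tree (no cycle, so it contains the unique unmatched black vertex as its root) or has a single cycle (hence $\beta_1 = 1$ and it is homotopy equivalent to a circle, and then it contains no unmatched vertex). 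This already gives the first sentence.

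For the last sentence — exactly one tree of each colour — I would combine the count $|E(H_x^b)| + |E(H_x^w)| = n$ with the vertex counts. Since $x$ is perfect and admissible there is exactly one unmatched black vertex $u_b$ and one unmatched white vertex $u_w$; the unmatched black vertex lies in a tree component (a cycle component, having $\beta_1=1$, has as many matched vertices as edges and hence no room for a root, by the local Euler count). So at least one black component is a tree. Conversely, summing the Euler identity $\#\text{components that are trees} = \#\text{unmatched black vertices} = 1$ (each tree component contributes exactly one unmatched vertex by the interior count, each circle component contributes none) shows there is exactly one tree among the black components, and symmetrically one among the white components. The main obstacle I anticipate is making the planar Euler-characteristic bookkeeping clean when a component $C$ is not simply the boundary of a disc — one has to be careful that the white faces counted are exactly the $G_w$-vertices interior to the region cut out by $C\cup(\text{the rest of }H_x^b)$, and that every such interior white vertex is indeed matched within that region; this is where I would lean most heavily on the fact that $x$ is a perfect matching on all of $\Gamma(D)$ together with the orthogonality of $H_x^b$ and $H_x^w$ established in the proof of Theorem~\ref{thm:dmfforest}.
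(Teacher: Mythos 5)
Your core argument is the same as the paper's. The paper's proof simply observes that a connected component $H$ of $H_x^b$ satisfies $H \sim \bigvee^m S^1$, so $V(H)-E(H)=1-m$; since the perfect matching sends each edge (crossing) of $H$ injectively to one of its black endpoints, which necessarily lies in $H$, one gets $E(H)\le V(H)$ and hence $m\le 1$, with tree components carrying exactly one unmatched vertex and cycle components none, so admissibility (one unmatched vertex of each colour) gives exactly one tree of each colour. Your first and third paragraphs reproduce exactly this count, so the proposal is essentially correct and follows the paper's route. Two remarks. First, the planar-face bookkeeping you import from Lemma~\ref{lem:unpaired} (the quantity $w_C$ and the ``region cut out by $C$'') is unnecessary, and it is precisely what creates the ``main obstacle'' you anticipate at the end: the paper's count is intrinsic to the component --- edges inject into vertices via the matching --- and never mentions faces, so no care is needed about which white vertices lie interior to which region. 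Second, the assertion in your second paragraph that ``no two distinct components can each contain one'' monochromatic loop is false, and the argument offered for it does not work: Lemma~\ref{lem:unpaired} forces one unpaired vertex on each side of a loop, but that vertex may be the white root, and several nested black loops can all simultaneously separate the black root from the white root. Indeed the paper's description of admissible perfect matchings (Figure~\ref{fig:circlesofhell}, Proposition~\ref{prop:admissibleodd}, and the induction on $k$ with $|J(x)|=2k+1$ in Lemma~\ref{lem:connectedjordan}) explicitly allows many concentric cycle components of each colour. Fortunately this claim is not load-bearing: the dichotomy already follows from your first-paragraph count, so the error is a removable side remark rather than a gap in the proof.
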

\begin{proof}
Call $H$ such a connected component, and assume without loss of generality that $H$ is a subgraph of the black graph. Then $H \sim \bigvee^m S^1$ for some $m \ge 0$. Now, the Euler characteristic tells us that the difference between the number of black vertices and crossings is $1-m$. 

\begin{figure}[ht]
  \centering
\includegraphics[width=11cm]{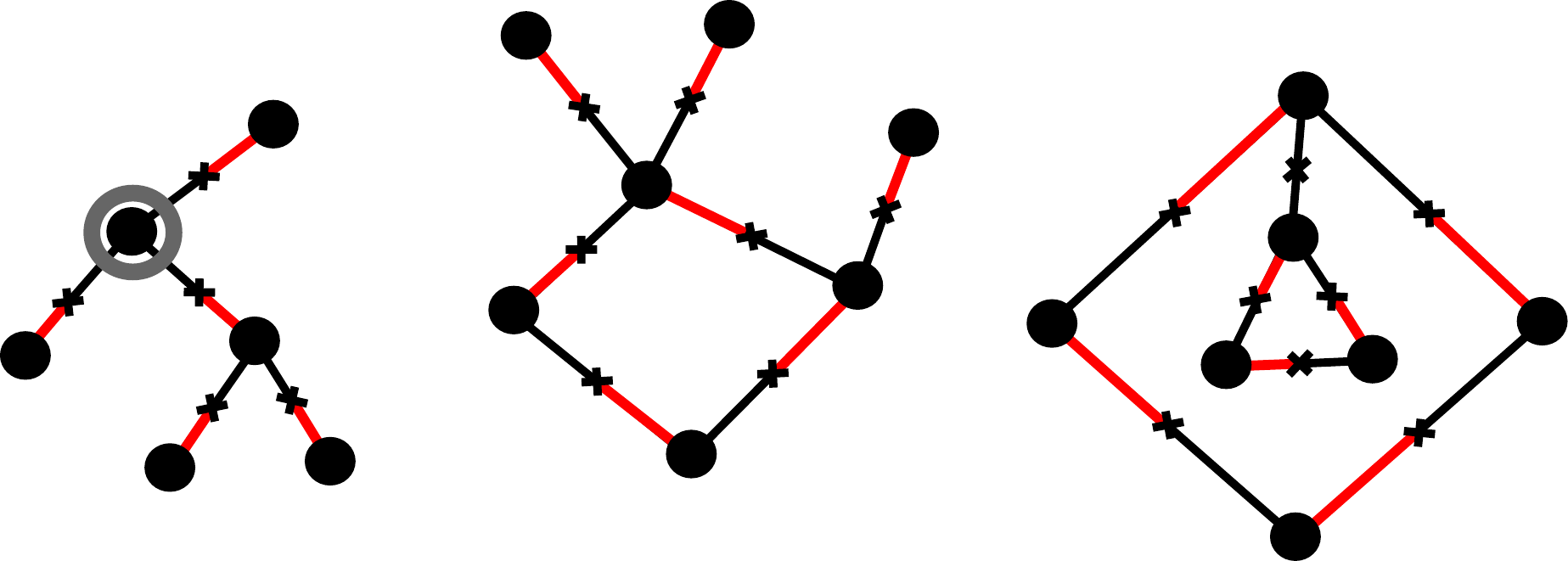}
\caption{On the left, a tree with its circled root--which is the unpaired vertex; half edges are marked with a cross, and matching between crossings and vertices are highlighted in red. In the middle a component with the homotopy-type of a circle. Here there are no unmatched vertices, which must instead be in the complement in $S^2$ of this subgraph (\emph{cf}.~Prop.~\ref{prop:perfectdMf}). The configuration on the right (with rank $\ge 2$) cannot have all of its vertices matched.}
\label{fig:ranksofsubgraphs}
\end{figure}

Hence we can distinguish $3$ cases, shown in Figure \ref{fig:ranksofsubgraphs}; if $m = 0$, then $H$ is a tree, and there is always a perfect matching on (the first barycentric subdivision of) $H$ leaving out exactly one black vertex. If $m=1$, then $H$ is homotopically a circle, so it is a simple cycle with some trees ``attached'' to it. In this case the number of black vertices coincides with the number of crossings, and there is (using the assumption that the matching is perfect) a perfect matching on $H$ (in fact it is possible to prove that there are exactly two) which necessarily contains a monochromatic loop. On the other hand, if $m>1$ there are more crossings than black dots, so there is no hope for a portion of a perfect admissible\footnote{As the only unmatched vertices can only be black or white, not crossings.} matching to be supported by $H$. As trees are the only components that can support an unmatched vertex, the admissibility of the matching implies that there is exactly one of each colour.
\end{proof}

The following result characterises admissibility in terms of the parity of the Jordan resolution and is used in the proof of Theorem \ref{thm:clickclock}; as a consequence of the proof, we will also see that the subgraphs induced by admissible and perfect matchings are concentric.

\begin{proposition}\label{prop:admissibleodd}
A perfect matching $x \in \widetilde{X}(D)$ is admissible if and only if $|J(x)|$ is odd.
\end{proposition}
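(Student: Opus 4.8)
The plan is to compute $|J(x)|$ by an Euler characteristic argument and then read its parity off the number of unmatched regions of a single colour. Throughout let $x$ be a perfect (that is, maximal) partial Kauffman state, and let $H_x^b\subseteq G_b$, $H_x^w\subseteq G_w$ be the spanning subgraphs it induces; write $v_b=|V(G_b)|$, $e_b=|E(H_x^b)|$, let $k_b$ be the number of connected components of $H_x^b$, and define $v_w,e_w,k_w$ analogously. Since $x$ dots every crossing and each dotted crossing contributes exactly one edge to $H_x^b$ or to $H_x^w$, we have $e_b+e_w=n$, while $v_b+v_w=n+2$ by Euler's formula for the connected $4$-valent graph $D$ on $S^2$.

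The geometric heart of the argument is that $J(x)$ bounds a compact subsurface $\Sigma_b\subseteq S^2$ with $\partial\Sigma_b=J(x)$ which deformation retracts onto $H_x^b$: take $\Sigma_b$ to be the union of slightly shrunken copies of the black regions (each an embedded disc, since $D$ is connected) together with one band joining the two black regions at each black-dotted crossing. Inspecting Figure~\ref{fig:singleresolvecrossing} shows that the shrunken region boundaries and the band sides fit together exactly into $J(x)$, while $\Sigma_b$ retracts onto $H_x^b$ by collapsing each shrunken region to its black vertex and each band to its edge. (Equivalently, $S^2\setminus J(x)$ is obtained from the regions of $S^2\setminus D$ by merging the two black regions at every black-dotted crossing and the two white regions at every white-dotted crossing, so that its black faces correspond bijectively with the components of $H_x^b$.) Every component of $\Sigma_b$ is a compact connected subsurface of the sphere, hence planar, so a component with $h_i$ boundary circles has $\chi=2-h_i$; summing over the $k_b$ components and using $\chi(\Sigma_b)=\chi(H_x^b)=v_b-e_b$ gives
\[
|J(x)| \;=\; \sum_{i=1}^{k_b} h_i \;=\; 2k_b-(v_b-e_b).
\]

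Now maximality forces $x$ to inject the $n$ crossings onto $n$ of the $n+2$ regions, of which exactly $e_b$ are black and $e_w$ are white; hence precisely $v_b-e_b\ge 0$ black and $v_w-e_w\ge 0$ white regions are unmatched, and $(v_b-e_b)+(v_w-e_w)=(v_b+v_w)-(e_b+e_w)=2$. Thus $v_b-e_b\in\{0,1,2\}$, and $x$ is admissible exactly when there is one unmatched region of each colour, i.e.\ when $v_b-e_b=1$. Substituting into the displayed identity, $|J(x)|=2k_b-(v_b-e_b)$ is odd if and only if $v_b-e_b$ is odd, if and only if $v_b-e_b=1$, if and only if $x$ is admissible.

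The one genuinely delicate point should be verifying $\partial\Sigma_b=J(x)$, i.e.\ that the local model of shrunken regions and bands glues correctly across both kinds of dotted crossing — including the degenerate case where one black region occupies two opposite corners of a crossing; this is a picture-chase against Figure~\ref{fig:singleresolvecrossing}, and since $\Sigma_b$ is automatically an orientable planar surface the count above is insensitive to how such bands attach. Finally, the same decomposition $S^2=\Sigma_b\cup_{J(x)}\Sigma_w$ yields the ``concentric'' addendum: for admissible $x$, Proposition~\ref{prop:whichcomponents} forces each component of $\Sigma_b$ and of $\Sigma_w$ to be a disc (over a tree component, with one boundary circle) or an annulus (over a circle-type component, with two), with exactly one disc of each colour; hence the adjacency tree of $S^2\setminus J(x)$ has precisely two leaves and all other vertices of degree two, so it is a path — which says exactly that the Jordan cycles are concentrically nested.
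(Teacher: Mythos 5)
Your argument is correct, and it takes a genuinely different route from the paper's. The paper proceeds qualitatively: it combines Lemma~\ref{lem:unpaired} (each supported monochromatic loop has exactly one unpaired vertex on either side) with Proposition~\ref{prop:whichcomponents} to force the Jordan cycles of a perfect admissible matching to be concentric, and then extracts the parity by joining the two unpaired regions with a path transverse to the arcs of $D$, noting that every intersection with an arc switches the colour of the region the path is in. You instead compute $|J(x)|$ exactly: identifying $J(x)$ with the boundary of the regular neighbourhood $\Sigma_b$ of $H_x^b$ — which is precisely the paper's observation, in the caption of Figure~\ref{fig:treekauffstate}, that the Jordan resolution is a neighbourhood of both trees, extended from states to arbitrary maximal pKs; your picture-chase against Figure~\ref{fig:singleresolvecrossing} is exactly that check, and it is pinned down by the paper's convention — an Euler characteristic count gives $|J(x)| = 2k_b - (v_b - e_b)$, i.e.\ $k_b + b_1(H_x^b)$, and the parity is then read off from the number $v_b - e_b \in \{0,1,2\}$ of unmatched black regions. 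What your route buys: an exact formula for $|J(x)|$ (from which the paper's subsequent remark that $|J(x)|-1$ equals the number of monochromatic loops also drops out), independence of the equivalence itself from Lemma~\ref{lem:unpaired} and from concentricity, insensitivity to degenerate attachments such as nugatory crossings, and a clean derivation of the concentricity statement as an afterthought: by Proposition~\ref{prop:whichcomponents} there are exactly two disc components (one of each colour) and the rest are annuli, so the adjacency tree of the complementary regions of $J(x)$ is a path, i.e.\ the cycles are nested. What the paper's route buys is brevity and the fact that it produces the concentric picture of Figure~\ref{fig:circlesofhell} directly, which is what the rest of Section~\ref{sec:clickclock} actually uses; your quantitative identity would in fact make the paper's final parity step (stated there rather tersely) completely explicit.
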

\begin{proof}
It follows from the proof of Lemma~\ref{lem:unpaired} that each monochromatic loop supported by $x$ has exactly one unpaired vertex in both discs into which it divides $S^2$. Furthermore, by Proposition~\ref{prop:whichcomponents} each connected component of the black and white subgraphs $H_x^b$ and $H_x^w$ can contain at most one monochromatic loop.

Then, by Lemma~\ref{lem:unpaired}, each connected component of $J(x)$ must have an unmatched vertex on both sides; here we are using the fact that monochromatic loops create connected components in the Jordan resolution. Since $x$ is perfect, there are only two unmatched vertices in $\Gamma(D)$; this forces all Jordan cycles in $J(x)$ to be concentric on the $2$-sphere (see Figure~\ref{fig:circlesofhell}).  
\begin{figure}[ht]
  \centering
\includegraphics[width=9cm]{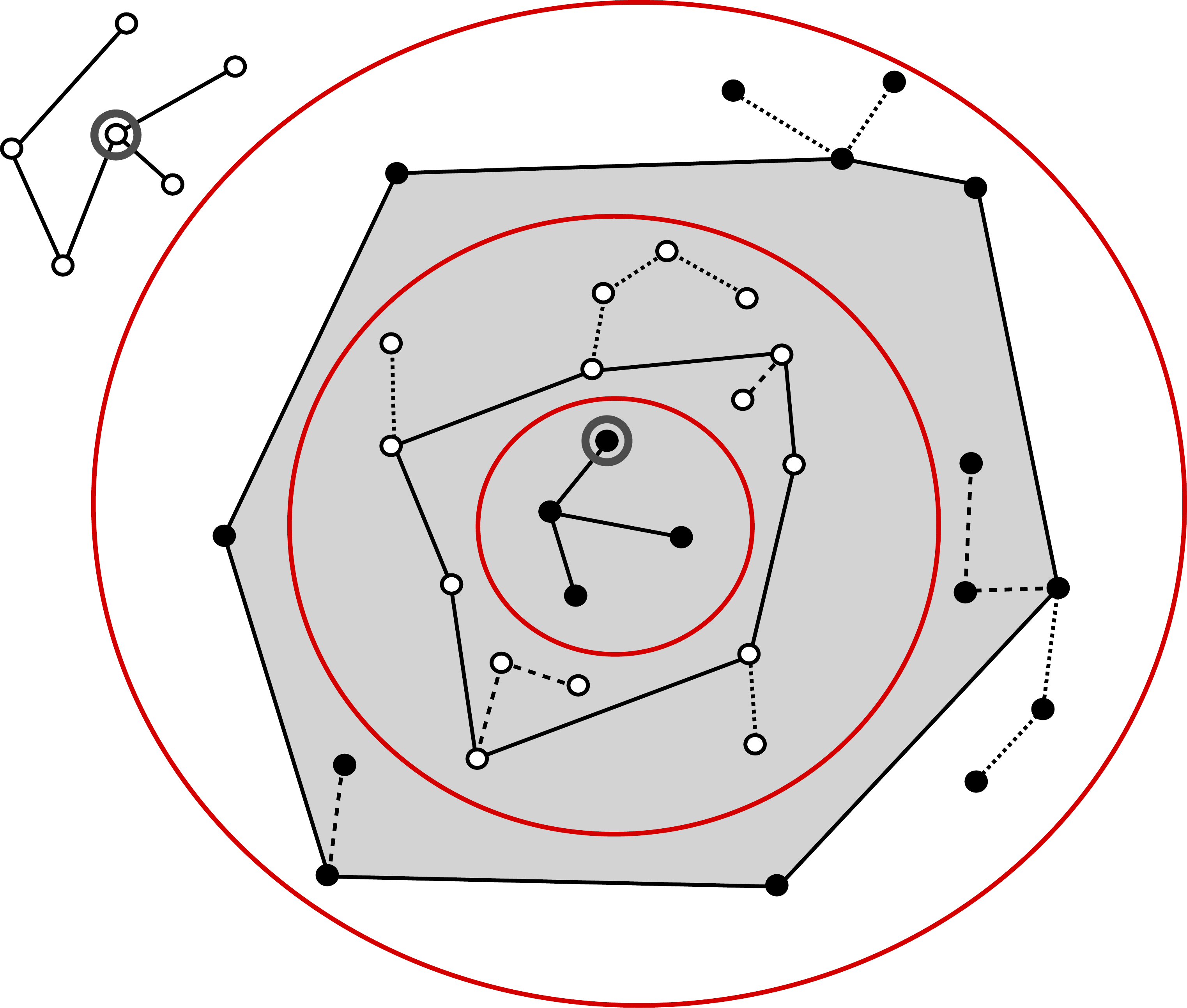}
\caption{A schematic view of a generic admissible matching up to isotopy on $S^2$; the white and black graphs here are the subgraphs of $G_w$ and $G_b$, respectively, induced by a maximal pKs. In the central disc, there is the isolated black rooted tree (roots are circled), while the white one is on the outside. The other components are the pseudotrees, and external trees are dotted while internal ones dashed.
The only unmatched vertices are the roots of the two trees. The red circles represent the $3$ Jordan cycles. The grey shaded region contains the innermost black cycle, its internal trees and the unique isolated black tree.}
\label{fig:circlesofhell}
\end{figure}

Connect the two unpaired vertices with a path in $S^2$ that avoids the crossings of $D$. Each time the path crosses an arc in the projection it changes colour. Then, note that the parity of the number of the intersections coincides with the parity of the number of circles in any possible Jordan resolution having those two vertices as the unmatched ones.
\end{proof}

As a consequence of this last result, we see that  for a given perfect admissible matching $x$, $|J(x)| - 1$ coincides with the number of monochromatic loops in $x$.

It follows from the last two results that the subgraph $H_x^b$ of the black graph  induced by a perfect admissible matching is a special case of a spanning \emph{pseudoforest}. By this we mean that each connected component is either a tree or has the homotopy type of a circle (it is a \emph{pseudotree}). Up to isotopy of $S^2$, the unique black tree can be thought of as being in the centre of all the concentric pseudotrees. We call this tree \emph{isolated}; all other components of the black pseudoforest can be decomposed into three parts: a \emph{cycle} (\emph{i.e.}~a single simple closed loop in $G_b$ induced by a monochromatic loop), the \emph{internal} trees and the \emph{external} trees, each of which is attached to the cycle at exactly one vertex.
We can distinguish between internal and external trees, according to whether they are respectively in the disc bounded by the cycle that contains the isolated tree of the same colour or not. The schematic picture is summarised in Figure~\ref{fig:circlesofhell}.
\bigskip

We recall here the statement of the simultaneous generalisation of the Clock Theorem by Kauffman~\cite[Thm.~2.5]{kauffman2006formal} to perfect matchings and of the main result in~\cite[Thm.~1]{kenyon1999trees}. In particular, Theorem~\ref{thm:clickclock} gives a way to organise all possible discrete Morse functions on a given cellular structure on $S^2$ in the image of $\Xi$, where each pair of discrete Morse functions is related by a finite sequence of moves.

\begin{theorem}[Click-Clock]\label{thm:clickclock}
If the diagram $D$ is reduced, any two perfect admissible matchings on $\Gamma(D)$ are related by a finite sequence of click path, click loop and clock moves.
\end{theorem}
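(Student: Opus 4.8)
The plan is to join any two perfect admissible matchings $x_0,x_1$ on $\Gamma(D)$ by routing both of them through genuine discrete Morse functions, in three stages. First I would reduce each $x_i$, using only clock and click loop moves, to a matching with a connected Jordan resolution; then I would bring the two resulting discrete Morse functions to a common marked diagram using click path moves; finally I would appeal to the original Clock Theorem. Concatenating these sequences, and running the first two stages backwards starting from $x_1$, connects $x_0$ to $x_1$ by click path, click loop and clock moves.

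\emph{Stage 1: removing monochromatic loops.} Recall that for a perfect admissible $x$ the quantity $|J(x)|-1$ equals the number of monochromatic loops it supports, that $|J(x)|$ is odd (Proposition~\ref{prop:admissibleodd}), and --- from the proof of that proposition --- that these loops are concentric on $S^2$, so $H_x^b$ and $H_x^w$ have the nested pseudoforest shape of Figure~\ref{fig:circlesofhell}. The crucial claim is that whenever $|J(x)|\ge 3$, a finite sequence of click loop and clock moves carries $x$ to a perfect admissible $x'$ with $|J(x')|=|J(x)|-2$; iterating and invoking Corollary~\ref{cor:dmfconnectedJ} then produces a perfect discrete Morse function. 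To prove the claim I would isolate an innermost monochromatic loop $\gamma$ (say black), bounding a disc $\Delta$ that meets no other monochromatic loop. Inside $\Delta$ the restriction of $x$ is loop-free, so using the clock moves that leave $|J|$ unchanged (the Type~I moves in the proof of Lemma~\ref{lem:changeby2}) together with, if necessary, a click loop move reversing the orientation $x$ induces on $\gamma$, I would normalise the dots near $\gamma$ into a standard local configuration; here reducedness of $D$, \emph{i.e.}~$2$-connectedness of $G_b$ and $G_w$, is exactly what keeps $\gamma$ from being pinched and makes such a configuration reachable. A suitable Type~III clock move performed there then lowers $|J(x)|$ by $2$ --- equivalently, eliminates two of the monochromatic loops --- and the outcome is still perfect, hence still admissible since $|J|$ remains odd. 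One also checks that, the moves being supported in a neighbourhood of $\Delta$, no new monochromatic loop appears elsewhere.

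\emph{Stages 2 and 3: aligning the critical cells and applying the Clock Theorem.} After Stage~1 both matchings have connected Jordan resolution, so by Corollary~\ref{cor:dmfconnectedJ} and Theorem~\ref{thm:dmfforest} each is a perfect discrete Morse function supporting a spanning tree of $G_b$, with its two unmatched vertices as roots. Fix an arc $a$ of $D$ with incident regions $v_b,v_w$. By Proposition~\ref{prop:clickpathmove}, at most two click path moves carry each of these discrete Morse functions to a Kauffman state in $X(D,a)$, and such moves do not change the underlying unrooted tree, hence keep the Jordan resolution connected. Both matchings now lie in $X(D,a)$, so the original Clock Theorem (Theorem~\ref{thm:jordantrails}) relates them by a finite sequence of clock moves, and the argument is complete.

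The main obstacle is Stage~1: producing, for an arbitrary perfect admissible matching with $|J|\ge 3$, an explicit Type~III clock move that lowers $|J|$. A Type~III move requires a fairly rigid local picture around the innermost monochromatic loop, so the real work is to show --- using the nested pseudoforest structure and the $2$-connectedness coming from reducedness --- that click loop moves and $|J|$-preserving clock moves can always steer $x$ into that picture, and that doing so creates no new monochromatic loops. Everything after this reduction is essentially bookkeeping built on the Clock Theorem and Proposition~\ref{prop:clickpathmove} already established above.
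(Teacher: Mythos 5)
Your overall route coincides with the paper's: reduce a perfect admissible matching to one with connected Jordan resolution (hence, by Corollary~\ref{cor:dmfconnectedJ}, a perfect dMf), then use Proposition~\ref{prop:clickpathmove} to move the critical cells to a prescribed adjacent pair, and finish with the classical Clock Theorem. Stages 2 and 3 are correct and are essentially the paper's proof of Theorem~\ref{thm:clickclock} given Lemma~\ref{lem:connectedjordan}. The genuine gap is Stage 1, which \emph{is} Lemma~\ref{lem:connectedjordan} and is the heart of the theorem: you assert it rather than prove it. Saying you will ``normalise the dots near $\gamma$ into a standard local configuration'' by Type~I clock moves and a click loop move, with reducedness ``making such a configuration reachable,'' restates the goal without supplying a mechanism, and the obstruction is not local to a neighbourhood of the innermost loop. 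Concretely, the disc bounded by the innermost black cycle contains internal black trees attached to that cycle, and the region between the innermost black and white cycles can contain external white trees; these separate the two cycles and block the component-merging Type~III move. The paper removes them by a global procedure: leaf spins, shown in Lemma~\ref{lem:spinandclock} to be realisable by clock moves (plus click path moves when a root is involved); connectivity of the graph of almost-trees under leaf spins (Proposition~\ref{prop:connectedgraph}), which is where $2$-connectedness, \emph{i.e.}~reducedness, actually enters via the Harary--Mokken--Plantholt interpolation theorem; and the boundary contraction translating ``isolated tree plus internal trees'' inside $\overline{G}_b$ into an almost-tree of $C(\overline{G}_b)$. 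Even after spinning all internal black trees onto the isolated tree, one must still spin away the external white trees (by then, paths) before the final Type~III move, possibly preceded by a click loop move. None of this machinery, nor any substitute for it, appears in your sketch, so the claim that the required configuration is reachable is unsupported.

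A smaller but real issue: you claim Stage 1 needs only clock and click loop moves. The paper's argument may require click path moves in this stage as well (leaf spins involving a root are accompanied by a click path move to shift it), and the paper explicitly remarks that it is unclear whether click path moves can always be avoided there. Your stronger claim is therefore not available without further argument; fortunately it is also unnecessary, since click path moves are permitted in the statement you are proving.
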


As an easy consequence of the proof, two perfect dMfs can be transformed into one another by clock and click path moves -- or only clock moves of Type I if they share the same critical points (this last part is Kauffman's original Clock Theorem); furthermore, if these two dMfs share the same Jordan trail, then they differ by at most two click path moves.

Theorem \ref{thm:clickclock} will follow from the next two lemmas; the proof of the first one is a straightforward generalisation of Lemma \ref{lem:sametrail} to perfect admissible matchings. The proof of the second one is instead rather involved, and will occupy most of the remaining section.

\begin{lemma}\label{lem:samejordan}
Two perfect admissible matchings on $\Xi(D)$ have the same Jordan resolution if and only if they are related by a finite sequence of click path and loop moves.
\end{lemma}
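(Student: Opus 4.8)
The plan is to prove both implications of Lemma~\ref{lem:samejordan} by carefully analysing what data a Jordan resolution of a perfect admissible matching records, and what freedom is left once that data is fixed. Recall from Proposition~\ref{prop:admissibleodd} and the discussion following it that a perfect admissible matching $x$ has $|J(x)|-1$ monochromatic loops, all of whose Jordan cycles are concentric on $S^2$, and that by Proposition~\ref{prop:whichcomponents} the induced subgraphs $H_x^b \subset G_b$ and $H_x^w \subset G_w$ are spanning pseudoforests, each with exactly one (isolated) tree and all other components pseudotrees carrying exactly one cycle. So the combinatorial content to pin down is: (i) the underlying unrooted pseudoforest in each colour, and (ii) the choice of orientation on each monochromatic loop.

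For the ``if'' direction I would simply observe that neither move changes $J(x)$: a click loop move, by Definition~\ref{def:} (via the click loop move), alters $x$ only along a monochromatic loop, reversing the induced orientation, and such a reversal does not alter the underlying resolved curve at any crossing — at every crossing on the loop the resolution is still the one joining the two arcs through the same pair of regions; and a click path move, by the proof of Proposition~\ref{prop:clickpathmove} (see Figure~\ref{fig:clickpathnoloop}), only shifts dots along a path in a tree, which as in Lemma~\ref{lem:sametrail} (Figure~\ref{fig:2caseslemma}) is exactly the situation in which the Jordan resolution is unchanged. Hence a finite sequence of such moves preserves $J(x)$.

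For the ``only if'' direction, suppose $x$ and $x'$ are perfect admissible with $J(x)=J(x')$. First I claim they support the same unrooted pseudoforests $H_x^b = H_{x'}^b$ and $H_x^w = H_{x'}^w$: at each crossing, $J(x)=J(x')$ forces the local picture of Figure~\ref{fig:2caseslemma}, so the black edge at that crossing is in $H_x^b$ iff it is in $H_{x'}^b$ (and dually for white), exactly as in the proof of Lemma~\ref{lem:sametrail}; the only new feature compared to Lemma~\ref{lem:sametrail} is that these subgraphs are now pseudoforests rather than trees, which does not affect the argument. Now, fixing this common pseudoforest, the matching $x$ is determined by: on the isolated tree, the position of the root (unmatched vertex), which can be moved to any vertex of that tree by click path moves (Figure~\ref{fig:clickpath}); on each external or internal tree of a pseudotree, the orientation is forced because the tree is attached to its cycle at a single vertex and carries no unmatched vertex, so the matching on the (barycentrically subdivided) tree flows toward that attaching vertex and is rigid; and on each monochromatic loop, there are exactly two compatible matchings (the two cyclic orientations, as noted in the proof of Proposition~\ref{prop:whichcomponents}), and passing between them is precisely a click loop move. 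Since $x$ and $x'$ have the same unmatched vertices iff their isolated trees are rooted at the same vertex, and in general the roots can be brought into agreement by at most two click path moves (one per colour), and then each loop on which they still disagree is fixed by one click loop move, we obtain a finite sequence of click path and click loop moves from $x$ to $x'$.

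The main obstacle I anticipate is the bookkeeping in the ``only if'' direction: making rigorous that, once the common pseudoforest and the matching on all the (sub)trees are fixed, the \emph{only} remaining degrees of freedom are the loop orientations and the root positions, and that these are independent across components — i.e. that a click loop move on one loop does not disturb the matching elsewhere, and that the concentric structure of the Jordan cycles (from Proposition~\ref{prop:admissibleodd}, Figure~\ref{fig:circlesofhell}) guarantees there is no obstruction to performing these moves in any order. This is essentially a connectivity/rigidity statement about perfect matchings on a barycentrically subdivided pseudotree, and while intuitively clear from the pictures, it is the step that requires the most care to state cleanly.
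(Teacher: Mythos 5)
Your proof is correct, but it takes a different route from the paper's. The paper argues directly on the \emph{disagreement} set: since $J(x)=J(x')$ forces the two matchings to assign the same colour at every crossing (the dichotomy of Figure~\ref{fig:2caseslemma}), one marks in $G_b$ (resp.\ $G_w$) every edge whose crossing carries the two dots on opposite same-coloured regions, observes that this subgraph has maximum degree two (a vertex of degree three would force two dots of one matching in a single region), and concludes it is a disjoint union of cycles -- each realised by a click loop move -- and paths, which by admissibility must join the unmatched vertex of $x$ to that of $x'$ and are realised by click path moves. You instead first deduce that $x$ and $x'$ induce the same pseudoforests, and then classify \emph{all} perfect admissible matchings inducing a fixed pseudoforest: a unique near-perfect matching per choice of root on the isolated tree, forced matchings on the trees attached to cycles, and exactly two matchings per pseudotree (the two loop orientations). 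The residual parameters -- root positions and loop orientations -- are then adjusted by at most one click path move per colour and one click loop move per disagreeing loop. Your rigidity claims do hold (leaf-peeling gives uniqueness of the rooted matching on a subdivided tree and the count of two on a pseudotree, the latter asserted in passing in Proposition~\ref{prop:whichcomponents}), and two rooted matchings of the same tree do differ exactly along the path between their roots, so the gap you flagged is genuinely closable. The trade-off: the paper's symmetric-difference argument is shorter and needs no classification, while yours leans on Propositions~\ref{prop:whichcomponents} and~\ref{prop:admissibleodd} but yields a sharper quantitative statement (at most two click path moves plus one click loop move per supported loop), in the spirit of the ``at most two moves'' refinement of Proposition~\ref{prop:clickpathmove}. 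One small point to make explicit in your write-up: the click path move is defined via a common unrooted \emph{tree}, so you should note that both unmatched vertices of a given colour lie in the common isolated tree component (as trees are the only components that can carry an unmatched vertex), which is what licenses applying the move in this pseudoforest setting -- the paper's proof relies on the same implicit extension.
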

\begin{proof}
The ``if'' direction is trivial, as both kinds of moves preserve the Jordan resolution.

For the other implication, let $x$ and $x^\prime$ denote two perfect admissible matchings satisfying $J(x) = J(x^\prime)$. Observe that $J(x) = J(x^\prime)$ if and only if all of the crossings in the corresponding diagrams resolve in the same way. This gives two possibilities, illustrated in Figure~\ref{fig:2caseslemma}; the two matchings can either coincide at a crossing or be on opposite sides (on the two incident regions with the same colour). 

If $x=x^\prime$ then the statement follows immediately. If not, then there must be at least one crossing $c$ on which the second possibility occurs. Without loss of generality, suppose we start at such a crossing, where the components of the two states are in the black regions. Draw an edge in $G_b$ if the crossing through which the edge passes is as in the right of  Figure~\ref{fig:2caseslemma}. Starting from $c$, draw as many such edges as possible in both directions. If this path forms a loop then the two states are related by a click loop move. If not, then there is a (unique) path between the two unmatched black regions, one from $x$ and one from $x^\prime$, in which case the two states are related by a click path move. To see that this is actually a path, observe that by construction we never have a vertex of degree three or more, because this would imply the existence of at least two components in a single region from the same matching, which violates the definition of a partial Kauffman state. Since the matchings are admissible, they each have exactly one unmatched black region, hence this path must be between the two unmatched regions in the different matchings with the same colour.
\end{proof}

\begin{lemma}\label{lem:connectedjordan}
Up to clock moves and click loop and path moves, the Jordan trail of a perfect admissible matching can be made connected.
\end{lemma}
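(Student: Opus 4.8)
The plan is to reduce the number of Jordan cycles of a perfect admissible matching $x$ by one, using a combination of the three moves, and then argue by induction on $|J(x)|$ (which is odd by Proposition~\ref{prop:admissibleodd}, so the base case $|J(x)| = 1$ is exactly the connected case). Recall from the discussion after Proposition~\ref{prop:admissibleodd} that $|J(x)| - 1$ equals the number of monochromatic loops supported by $x$, and that all Jordan cycles are concentric on $S^2$, with the isolated black tree in the innermost disc and the isolated white tree in the outermost region, and the pseudotrees (cycle plus internal and external trees) stacked in between. So the goal is to eliminate one monochromatic loop at a time.

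First I would focus on an \emph{innermost} monochromatic loop $\gamma$, say black (the white case being symmetric after exchanging the roles of the two colours and the notions of ``inside''/``outside''), meaning the disc $R$ it bounds containing the isolated black tree has no other monochromatic loops in its interior. By Lemma~\ref{lem:unpaired}, $R$ contains exactly one unpaired vertex, which must be the root of the isolated black tree. The idea is to use click path and click loop moves (which by Lemma~\ref{lem:samejordan} move freely within a fixed Jordan resolution, so don't yet help reduce $|J(x)|$) to put $x$ into a \emph{normal form} near $\gamma$ — e.g.\ shrinking the isolated black tree so that the unpaired black vertex sits adjacent to the cycle $\gamma$, and arranging the components on $\gamma$ itself and on its internal/external trees in a standard configuration. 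Then I would exhibit an explicit Type III clock move (the only type, by Lemma~\ref{lem:changeby2} and Figure~\ref{fig:casesclock}, that can merge two Jordan cycles) performed at a crossing lying on $\gamma$, which splices $\gamma$ together with the adjacent Jordan cycle just outside it, dropping $|J(x)|$ by $2$. Here the reducedness hypothesis on $D$ — equivalently $2$-connectivity of $G_b$ and $G_w$ — is what guarantees that $\gamma$ has length at least $4$ and that the local picture at the chosen crossing genuinely looks like a Type III configuration rather than a degenerate one; without it a monochromatic loop could be a single edge traversed twice and no clock move would apply.

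The main obstacle I expect is verifying that one can always maneuver into a position where a reducing Type III clock move is \emph{available}: a priori the components of $x$ on and around $\gamma$ could be arranged so that every crossing on $\gamma$ presents a Type I or Type II local picture rather than a Type III one. Resolving this is where the concentric structure of Figure~\ref{fig:circlesofhell} and the pseudotree decomposition must be used carefully — one needs to show that by first applying click loop moves to $\gamma$ (reversing its induced orientation) and click path moves to redistribute the internal tree hanging off $\gamma$ and to walk the unpaired vertex of $R$ to a convenient spot, one can force a Type III configuration at some crossing of $\gamma$. A clean way to organise this is to look at the annular region $A$ between $\gamma$ and the next Jordan cycle outward: $A$ is a disc-with-two-boundary-circles carrying the external trees of $\gamma$'s pseudotree together with (part of) the adjacent component, and a Euler-characteristic / counting argument in $A$, in the spirit of Lemma~\ref{lem:unpaired}, should pin down that the matching restricted to a neighbourhood of $\gamma$ cannot be ``locked'' in Types I/II everywhere. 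Once the reducing move is produced, the induction closes immediately: after the Type III clock move the result is still a perfect admissible matching (perfectness and admissibility of the image of a clock move being preserved, and $|J|$ dropping by $2$ keeping it odd), with strictly fewer Jordan cycles, so by the inductive hypothesis it can be made connected by further moves, and prepending our finite sequence of click and clock moves does the same for $x$.
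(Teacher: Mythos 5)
Your overall skeleton (induction on $|J(x)|$, aiming for a single Type III clock move near the innermost cycle that drops $|J(x)|$ by $2$) matches the paper's strategy, but the step you flag as ``the main obstacle'' is exactly where the real content lies, and your proposed way of handling it cannot work as stated. You want to reach a normal form near the innermost black cycle by ``shrinking the isolated black tree'' and using ``click path moves to redistribute the internal tree hanging off $\gamma$''. But click path and click loop moves preserve the Jordan resolution (Lemma~\ref{lem:samejordan}), and a click path move by definition keeps the same unrooted spanning structure, only transporting the root; so these moves cannot change the induced pseudoforests $H_x^b, H_x^w$ at all --- they can neither shrink the isolated tree nor move internal trees. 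The only moves that alter the forests are clock moves, and the configuration can genuinely be ``locked'': an external white path (equivalently, an edge of $\overline{G}_b\setminus\overline{H}_x^b$ with both endpoints on the innermost black cycle, the dashed edge in Figure~\ref{fig:bothoncycle}) separates the innermost white and black cycles, and then no crossing of $\gamma$ presents a Type III picture. A counting argument in the annulus ``in the spirit of Lemma~\ref{lem:unpaired}'' cannot rule this out, because the obstruction really occurs; it has to be removed by actively modifying the trees.

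The paper's proof supplies precisely this missing mechanism: leaf spins. Lemma~\ref{lem:spinandclock} shows that spinning a leaf of the induced pseudoforest is realised by clock moves (plus click path moves to shepherd the root), and Proposition~\ref{prop:connectedgraph} (via the boundary contraction $C(\overline{G}_b)$, which turns the isolated tree together with the innermost cycle and its internal trees into a spanning almost-tree, and the interpolation theorem of Harary--Mokken--Plantholt, where $2$-connectivity from reducedness is used) guarantees enough leaf spins exist to first absorb all internal black trees into the isolated tree and then to spin away the blocking external white paths, shrinking the innermost black cycle until the two innermost cycles share a square of $\Gamma(D)$; only then does a click loop move plus one Type III clock move (Figure~\ref{fig:casesclock}) merge the two innermost Jordan cycles. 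Without an argument of this kind your induction step does not close, so the proposal as written has a genuine gap rather than an alternative proof.
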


Since the proof of this last lemma is going to take a bit of work, we first show that, together with Lemma \ref{lem:samejordan}, it does in fact imply the Click-Clock theorem:

\begin{proof}[Proof of Theorem~\ref{thm:clickclock}]
It suffices to show that for a given perfect and admissible matching $x$ on $\Gamma(D)$, we can convert it to a perfect dMf of $\Gamma(D)$ with prescribed adjacent critical points; the thesis would then follow by applying the ``classical'' Clock Theorem.

Using Lemma~\ref{lem:connectedjordan} we can apply a finite sequence of clock and click loop moves on $x$ that make its Jordan trails connected. Call this new matching $\widetilde{x}$; by Corollary~\ref{cor:dmfconnectedJ} we know that $\widetilde{x}$ is in fact a dMf on $\Gamma(D)$, and up to click path moves we can transform it into yet another dMf with prescribed and adjacent critical points, by Proposition~\ref{prop:clickpathmove}.
\end{proof} 

Before we start the proof of Lemma \ref{lem:connectedjordan}, we need to introduce several new objects and study some of their properties. In particular we need the following \emph{leaf spin} operation, first considered in \cite{harary1983interpolation}. A \emph{leaf} is an edge with at least one incident vertex of degree one.

\begin{definition}
Let $G$ be a plane graph, and $H \subset G$ a subgraph. If $\ell$ is a leaf in $H$ which is not a leaf in $G$, define the clockwise (resp.~counterclockwise) \emph{spin} of $H$ along $\ell$ as the subgraph $H^\prime$ of $G$ obtained by removing $\ell$ from $H$, and adding the first edge encountered by spinning $\ell$ around its isolated endpoint in a clockwise (resp. counterclockwise) fashion (see Figure~\ref{fig:SPIN}).
\end{definition} 
\begin{figure}[ht]
  \centering
\includegraphics[width=9cm]{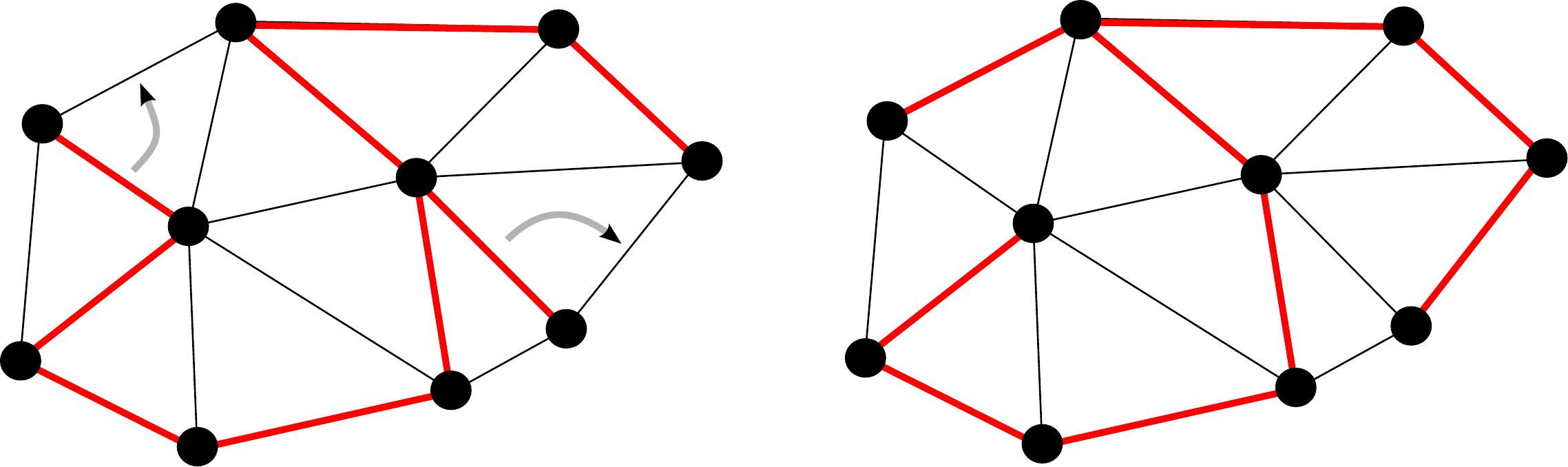}
\caption{On the right a plane graph with a spanning tree (highlighted in red); on the right the effect of two leaf spins on the tree, one clockwise and one counterclockwise. }
\label{fig:SPIN}
\end{figure}

If $H$ is a spanning forest (or pseudoforest) then leaf spins preserve this structure as pivoting on a leaf does not change the fact that it is a leaf. Hence, the graph $H^\prime$ obtained from $H$ by a leaf spin is also a spanning forest (or pseudoforest). Furthermore, a leaf spin always leaves the cycles of a pseudoforest unchanged. We need to take a bit more care when dealing with rooted pseudoforests, as this will be the case we need. If a component of the pseudoforest contains the root as the terminal vertex of the leaf we want to spin, we first need to move the root along the unique edge to which it is incident. In other words we never spin a leaf around the root; this way the root does not switch between connected components. As a special case, if the rooted connected component consists of only one edge, spinning it will leave the root as an isolated vertex.\bigskip

We are about to see how leaf spinning will yield the connection between concentric pseudoforests induced by admissible perfect matchings and certain clock moves on them, ultimately allowing us to conclude the proof of Lemma~\ref{lem:connectedjordan} and thus Theorem \ref{thm:clickclock}. More precisely, consider a pair of spanning pseudoforests induced by admissible perfect matchings, coinciding everywhere outside the first black cycle. We will prove that these are related by leaf spins (and hence by clock and click path moves).

\begin{lemma}\label{lem:spinandclock}
If the spanning concentric pseudoforest $H_x^b$ induced by the perfect admissible matching $x \in \widetilde{X}(D)$ contains a leaf, then all of the pseudoforests obtained by spinning this leaf are induced by matchings that are clock and click path equivalent to $x$.
\end{lemma}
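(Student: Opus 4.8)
The plan is to show that a single leaf spin of $\ell$ in $H_x^b$ corresponds, at the level of partial Kauffman states, to a sequence of \emph{local} modifications that can be realised by clock moves and (possibly) a click path move, using the characterisation of dMf-inducing matchings from Theorem~\ref{thm:dmfandstates} and the pseudoforest picture from Propositions~\ref{prop:whichcomponents} and~\ref{prop:admissibleodd}. First I would fix the leaf $\ell=uv$ with $v$ the degree-one endpoint in $H_x^b$, and (following the leaf-spin convention) assume $v$ is not the root of its component; then $v$ is matched to the crossing $c_\ell$ sitting on $\ell$, and the target of the spin is the first edge $\ell'=vw$ encountered rotating about $v$. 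The crossing $c_{\ell'}$ on $\ell'$ is, before the spin, matched either to $w$, or to a white region, or is unmatched — but since $x$ is admissible with a unique black isolated tree and a unique white isolated tree, the local configuration around $v$ is tightly constrained, and I would enumerate these cases on the Tait graph $\Gamma(D)$ exactly as in the local pictures of Figures~\ref{fig:2caseslemma}, \ref{fig:monoloops} and \ref{fig:casesclock}.

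The key step is the translation: spinning $\ell$ to $\ell'$ means, in $x$, unmatching $v$ from $c_\ell$ and rematching $v$ to $c_{\ell'}$, which forces a compensating change at $c_\ell$ and $c_{\ell'}$ (each crossing must remain matched to one of its four incident regions, except the at most one unmatched crossing corresponding to a non-tree edge). I would argue that this rematching, read off on the two (or few) crossings between $\ell$ and $\ell'$ in the cyclic order around $v$, is precisely a clock move (or a short composition of them) in the sense of Figure~\ref{fig:clockmove}: the dot near $c_\ell$ moves off the region containing $v$ and a dot elsewhere slides onto it, which is the defining picture of a clock/click move. When the spin detaches the last edge of the rooted component (the special case mentioned in the paragraph before the lemma), the resulting change of which vertex is unmatched is exactly a click path move of length one along $\ell$, justified by Lemma~\ref{lem:samejordan} / the click path definition. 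In all cases I would invoke Theorem~\ref{thm:dmfandstates} to check that the intermediate and final matchings still induce dMfs when they should: since a leaf spin preserves the forest/pseudoforest structure and leaves all cycles of the pseudoforest untouched, no new monochromatic loop is created and none is destroyed, so the orthogonal white pseudoforest $H_x^w$ is likewise only modified by a dual leaf spin (or unchanged), and admissibility and perfectness are preserved by the same Euler-characteristic count used in Lemma~\ref{lem:unpaired} and Proposition~\ref{prop:whichcomponents}.

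Concretely, the steps in order are: (1) set up notation for $\ell$, $v$, $u$, $\ell'$, $w$ and the cyclic order of edges of $G_b$ around $v$, reducing to the case where $v$ is not a root; (2) observe that $v$ is matched to $c_\ell$ and identify all possible statuses of the intervening crossing(s) between $\ell$ and $\ell'$ around $v$, using that there is a unique isolated black tree and that $x$ is perfect and admissible; (3) in each case, exhibit the rematching as a clock move or a click path move, drawing the local Tait-graph picture; (4) verify via Theorem~\ref{thm:dmfandstates} and the Euler-characteristic argument that the new matching is again perfect, admissible, has the same cycles, and differs from $x$ only by the prescribed leaf spin on $H^b$ and a dual spin on $H^w$; (5) conclude by induction that \emph{every} pseudoforest obtained by (iterated) spins of $\ell$ — i.e.\ landing on any admissible target edge in the rotation around $v$ — is reachable, since each elementary spin to the next edge in the cyclic order is one such move, and composing them reaches any admissible final position.

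The main obstacle I expect is the case analysis in step (3): because a pseudoforest can have internal and external trees attached to its cycles (Figure~\ref{fig:circlesofhell}), the edge $\ell$ being spun might be a leaf of one of those attached trees, and the crossing $c_{\ell'}$ it rotates onto might belong to a different component of $H_x^b$ or to the supporting cycle of a monochromatic loop; ruling out that a spin silently creates or destroys a monochromatic loop, or swaps the isolated tree with a pseudotree (which would break admissibility or change $|J(x)|$ by Proposition~\ref{prop:admissibleodd}), requires carefully using the hypothesis that $x$ is admissible and that leaf spins leave cycles unchanged. I would handle this by noting that a leaf of $H_x^b$, by definition of leaf, has a degree-one endpoint, so spinning it can only attach it to an edge already incident to $v$ in $G_b$; since $v$ already lies in one component, the spun edge stays in that component, and the cycle structure — hence the collection of monochromatic loops, hence $|J(x)|$ and admissibility — is preserved. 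The remaining delicate point, that the dual move on $H_x^w$ is again a (possibly trivial) leaf spin and not something wilder, follows from orthogonality of $H_x^b$ and $H_x^w$ (Theorem~\ref{thm:dmfforest}) together with the local picture, and I would make this precise by tracking which white edge, if any, the affected crossings' dots move onto.
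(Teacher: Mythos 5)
Your overall framework coincides with the paper's: work locally at the degree-one black vertex $v$, realise each elementary rotation of the leaf onto the next black edge around $v$ as a clock move with the neighbouring crossing, and use a click path move when the root of the component is involved. The genuine gap is at the pivotal step (your step (3)): you assert that the rematching forced by the spin ``is precisely a clock move (or a short composition of them)'', but a clock move between $c_\ell$ and the next crossing $c_{\ell'}$ on the boundary of the terminal region $R$ exists only if the component of the pKs at $c_{\ell'}$ sits in the white region shared with $c_\ell$. Nothing in your proposal shows this enabling configuration is always available, and this is exactly where the paper spends most of its effort: it shows every crossing on $\partial R$ other than $c_\ell$ is matched into one of the two flanking white regions, and then excludes the two potentially bad local configurations (no nearby white component permitting a clock move in either direction; more than one boundary arc of $R$ with no component on its crossings) using perfectness\slash maximality of the pKs together with the existence of the leaf (the pictures in Figure~\ref{fig:spinnotclock}). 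Saying ``the local configuration around $v$ is tightly constrained, and I would enumerate these cases'' names the task but does not supply the argument, and the obstacle you do analyse in detail (that a spin might create or destroy monochromatic loops or swap the isolated tree with a pseudotree) is not where the difficulty lies -- once the spin is exhibited as a sequence of clock and click path moves, preservation of perfectness and admissibility is automatic.

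Two further inaccuracies are worth flagging. First, your justification that the dual subgraph $H_x^w$ is ``only modified by a dual leaf spin (or unchanged)'' is false in general: a single clock move removes one white edge incident to the shared white region $W$ and adds another, and $W$ need not be a degree-one vertex of $H_x^w$; indeed the paper's Remark~\ref{rmk:whatspindoes} notes explicitly that a leaf spin may change the cycle of the \emph{dual} pseudoforest. (This does not hurt the lemma, since you do not actually need dMf-preservation via Theorem~\ref{thm:dmfandstates} here -- the matchings in question are merely perfect and admissible.) Second, your enumeration of the possible statuses of $c_{\ell'}$ includes impossible cases: in a perfect matching no crossing is unmatched, and $c_{\ell'}$ cannot be matched to the other black endpoint $w$, since that would put a second edge of $H_x^b$ at $v$, contradicting that $v$ has degree one. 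These are minor, but the missing availability argument for the clock moves is the substantive gap.
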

\begin{proof}
Call $R \in V(G_b)$ the region corresponding to the terminal black vertex. Let us start by noting that a leaf in $H_x^b$ is necessarily ``surrounded'' by white edges, as shown in Figure~\ref{fig:spinmatching}. Consider the two cases shown in Figure \ref{fig:spinmatching}: in the left one we can spin the terminal black leaf in a clockwise fashion, and this corresponds to a sequence of clock moves. After one spin, this local configuration is just the one we started with, only tilted by the angle of the spin. Hence, we can reach all other possible edges dual to the white ones through clock moves. In the second case we can start clocking\slash spinning the leaf in both directions, and also reach all other edges in $G_b$ sharing the terminal vertex with the leaf.

\begin{figure}[ht]
  \centering
\includegraphics[width=11cm]{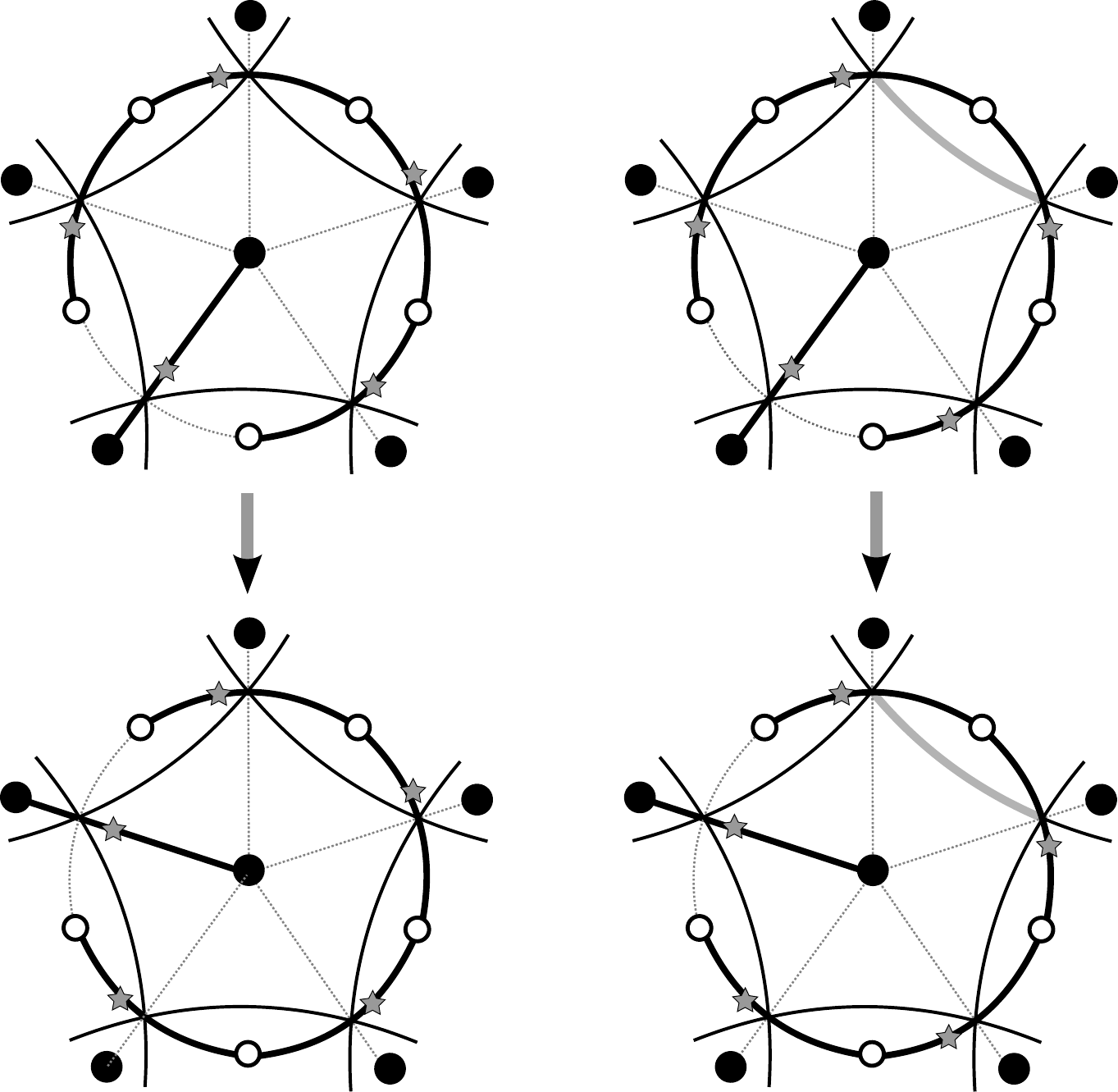}
\caption{Bold black edges are those included in the subgraphs induced by the matching, and stars mark the dots of the associated pKs. In the top-left configuration, we can only spin the black leaf in a clockwise fashion, and this can be achieved through a sequence of clock moves, the result of which is shown in the bottom-left of the figure. In the top-right configuration we can spin and apply clock moves in both directions; note that we cannot clock clockwise (resp. counterclockwise) after we reach the two edges at the sides of the unique white region (the bold grey arc in the top-right) not matched with a crossing adjacent to the region $R$. In the bottom-right configuration we can see the result of applying a clockwise leaf spin to the top-right configuration.}
\label{fig:spinmatching}
\end{figure}

It remains to show that these are all of the possible configurations containing leaves; \emph{a priori} there are two possible things that could go wrong. Firstly, there could be no component of the pKs in nearby white regions that allows for a clock move in either direction (and hence the spin could not correspond to a clock move); this can be excluded by looking at the left part of Figure \ref{fig:spinnotclock}.

Lastly, applying multiple leaf spins might result in a configuration where more than one edge of $R$  does not contain any component of the pKs on the crossings it has in common with $R$. This configuration contradicts the maximality of the pKs and the fact that there is a leaf, as illustrated in the right part of Figure~\ref{fig:spinnotclock}.

Note that if any of these leaf spin moves involve the root, then we need to apply a suitable click path move to shift it away.

\begin{figure}[ht]
  \centering
\includegraphics[width=11cm]{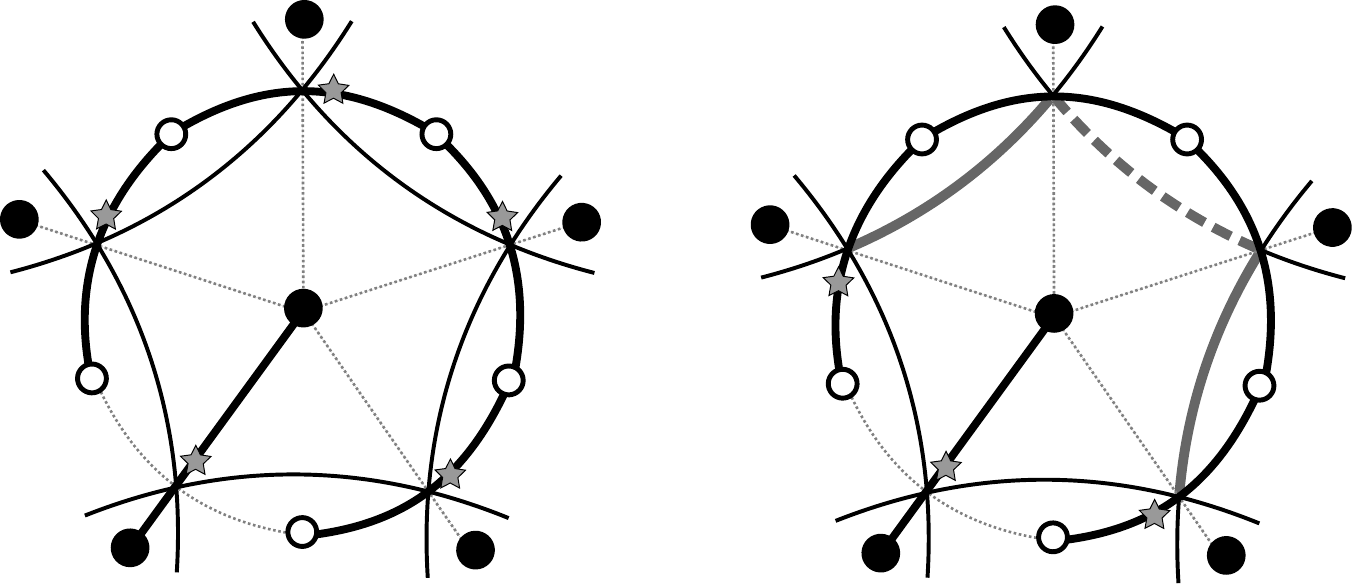}
\caption{The two impossible configurations from the proof of Lemma \ref{lem:spinandclock}. 
In the first case the stars near the only matched black edge force an impossible configuration: the two crossings in the top right can not be matched with any region, compatibly with the existence of the black leaf. In the second case, if we assume that the two bold grey arcs have no stars, there is no way of completing the partial Kauffman state in the grey bold and dashed edge (again, if we want to preserve the black leaf). }
\label{fig:spinnotclock}
\end{figure}
\end{proof}

\begin{remark}\label{rmk:whatspindoes}
It follows immediately that spinning a leaf always correspond to clock moves of Types I and II (see Figure \ref{fig:casesclock}); in particular, a leaf spin can never merge different components of $J(x)$. Furthermore, a leaf spin cannot alter the cycle in a pseudoforest, but might change the cycle of the \emph{dual} pseudoforest.
\end{remark}

To complete the proof of Lemma~\ref{lem:connectedjordan} by proving the existence of a sequence of moves decreasing the number of Jordan cycles, we must take a detour to introduce some technical tools.

\begin{definition}
An \emph{almost-tree} is the spanning forest obtained by removing exactly  one edge from a spanning tree (see Figure~\ref{fig:COMPACTIFICATION}). If the edge removed is a leaf, then its isolated vertex will be considered as a connected component. 
\end{definition}

\begin{proposition}\label{prop:connectedgraph}
If $G$ is a $2$-connected plane graph, denote by  $\widehat{G}_{aT}$ the graph whose vertices are almost-trees in $G$ and edges are leaf spins. Then  $\widehat{G}_{aT}$ is connected.
\end{proposition}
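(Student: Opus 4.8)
The goal is to show that the graph $\widehat{G}_{aT}$ of almost-trees in a $2$-connected plane graph $G$, with edges given by leaf spins, is connected. The natural strategy is to reduce the connectivity of $\widehat{G}_{aT}$ to the well-known connectivity of the \emph{tree graph} (sometimes called the spanning-tree exchange graph), where vertices are spanning trees of $G$ and edges are edge-swaps (remove one edge, add another to restore a tree). So first I would recall (or quickly reprove, since $G$ is connected) that any two spanning trees of $G$ are related by a finite sequence of such elementary exchanges. The plan is then to promote each elementary exchange between spanning trees to a sequence of leaf spins between suitable almost-trees.

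The key step is the following local claim: if $T$ is a spanning tree and $T' = T - e + f$ is obtained by a single edge swap (so $e$ lies on the unique cycle of $T + f$), then the almost-tree $T - e$ and the almost-tree $T' - f = T - e$ — wait, these coincide, so more care is needed. The right bookkeeping is to track almost-trees, not trees: given an almost-tree $A = T \setminus \{e\}$, I want to reach $T \setminus \{e'\}$ for any other edge $e'$ of $T$ by leaf spins, and separately to be able to ``rotate'' the missing edge around. Concretely, I would argue: (1) in any almost-tree $A$ of a $2$-connected plane graph, there is always a leaf that is not a leaf of $G$ (using $2$-connectedness to rule out the degenerate case where every leaf of $A$ is forced), so leaf spins are always available; (2) by a sequence of leaf spins one can move a chosen leaf of $A$ around a face of $G$, which — since $G$ is $2$-connected and hence every face boundary is a cycle — lets one realise, after a full loop around a face, the effect of swapping which edge of that face is omitted. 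Iterating over faces and using that the face-adjacency graph of a $2$-connected plane graph is connected, one connects all almost-trees obtained by deleting an edge from a fixed spanning tree. Combined with the tree-exchange connectivity from the first paragraph, this gives a path in $\widehat{G}_{aT}$ between any two almost-trees.

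More explicitly, here is the order I would carry out the argument. \emph{Step 1:} Observe that leaf spins preserve the almost-tree property (already noted in the text for forests) and that, since $G$ is $2$-connected, every almost-tree $A$ has at least one ``spinnable'' leaf — the obstruction would be a leaf edge $\ell$ of $A$ whose free endpoint has degree $1$ in $G$, contradicting $2$-connectedness, unless the component of $A$ containing $\ell$ is already trivial, in which case one picks another component. \emph{Step 2:} Show that, by repeatedly spinning a single leaf, one can walk its free endpoint around the boundary of any incident face of $G$; since the face boundary is a cycle (this is where $2$-connectedness enters again), completing one circuit around a face $F$ produces from $A$ the almost-tree in which the omitted edge has been replaced by another edge of $\partial F$. \emph{Step 3:} Since the dual graph $G^*$ is connected, any two edges of $G$ are linked by a sequence of faces, so we can move the omitted edge from any starting edge to any target edge; together with Step 1–2 this shows all almost-trees of the form $T_0 \setminus \{e\}$ (for a fixed tree $T_0$ and varying $e \in E(T_0)$) lie in one component of $\widehat{G}_{aT}$. \emph{Step 4:} Invoke connectivity of the spanning-tree exchange graph: given trees $T_0$ and $T_1$, pick an exchange sequence $T_0 \to T_1 \to \cdots \to T_k = T_1$; each elementary exchange $T_i \to T_{i+1}$ corresponds to deleting an edge $e_i$ from $T_i$ and re-adding a different edge, which in almost-tree language is connected via Steps 1–3 since both $T_i \setminus \{e_i\}$ and $T_{i+1} \setminus \{f_i\}$ are almost-trees reachable from each other by spinning. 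Concatenating yields a path in $\widehat{G}_{aT}$.

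\textbf{Main obstacle.} The delicate point is Step 2: making precise and correct the claim that spinning one leaf around a face realises a controlled change of the omitted edge, while verifying that at every intermediate stage one still has an almost-tree (never a disconnected forest with two missing edges, never a graph with a cycle) and that a spinnable leaf remains available. This requires careful local analysis of the planar picture — in particular ruling out that the spinning leaf ``collides'' with the rest of the almost-tree in a way that creates a cycle or an extra component — and is exactly the kind of configuration-chasing that the paper handles with figures elsewhere (cf.\ Figures~\ref{fig:SPIN} and \ref{fig:COMPACTIFICATION}). I expect the honest proof to dispatch this with a short case analysis, possibly invoking that the relevant local moves are reversible, so that one may always back out of a bad configuration.
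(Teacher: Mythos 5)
Your outer reduction (Steps 3--4) is sound modulo one lemma: if $T' = T - e + f$ is an elementary exchange, then $T - e$ and $T' - f$ are \emph{literally the same} almost-tree, so connectivity of $\widehat{G}_{aT}$ would follow from the standard exchange-connectivity of spanning trees once you know that, for a \emph{fixed} spanning tree $T_0$, all the almost-trees $T_0 \setminus \{e\}$ with $e \in E(T_0)$ lie in a single component of $\widehat{G}_{aT}$. That lemma is exactly your Step 2, and your argument for it rests on a misreading of the move: a leaf spin pivots the leaf edge around its degree-one endpoint, so that endpoint never moves; iterating spins of the same leaf only rotates the edge through the star of that fixed vertex. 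It does not ``walk the free endpoint around the boundary of a face'', and the asserted effect of completing a circuit around a face $F$ --- that ``the omitted edge gets replaced by another edge of $\partial F$'' --- is neither well defined (the omitted edge is not part of the data of an almost-tree, and the intermediate spins leave the family $\{T_0\setminus\{e\}\}$ anyway, so the bookkeeping must be set up and tracked) nor proved. You flag this yourself as the main obstacle; as written, the key step is missing, so the proof has a genuine gap.

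For comparison, the paper does not argue from scratch: it quotes Theorem~1 of Harary--Mokken--Plantholt \cite{harary1983interpolation}, which states that the graph whose vertices are the spanning trees of $G$ and whose edges are leaf spins is connected, and then observes that in $\widehat{G}_{aT}$ strictly more moves are available, since ($G$ being $2$-connected) every almost-tree completes to several spanning trees. If you want to keep your route, either replace the exchange graph by that cited leaf-spin tree graph, or prove the fixed-tree lemma honestly. A workable start: reduce to two edges $e = uv$ and $e' = vw$ of $T_0$ sharing a vertex $v$; when $v$ has degree two in $T_0$, the edge $vw$ is a spinnable leaf of $T_0\setminus\{e\}$ with pivot $v$, and rotating it through the whole star of $v$ (each intermediate subgraph is automatically an almost-tree, since re-attaching an edge at a vertex of degree zero creates no cycle and keeps exactly two components) ends at the edge $vu$, i.e.\ at $T_0\setminus\{e'\}$. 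The case where $v$ has higher degree in $T_0$ requires first spinning away the other branches at $v$, and that residual case analysis --- which your sketch currently omits --- is where the real work lies. (Your Step 1 hedge is unnecessary: in a $2$-connected graph on at least three vertices no vertex has degree one, so every leaf of an almost-tree is spinnable.)
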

\begin{proof}
This is an easy consequence of \cite[Thm.~1]{harary1983interpolation}. The authors of \cite{harary1983interpolation} prove that the related graph $G_T$, whose vertices are spanning trees and edges are leaf spins, is connected. We can conclude by noting that in $\widehat{G}_{aT}$ there are more possible moves that can be performed, as any almost-tree can arise after the removal of an edge in several trees in $G$.
\end{proof}

Recall that Theorem \ref{thm:dmfforest} and Proposition \ref{prop:whichcomponents} imply that for an admissible perfect matching $x \in \widetilde{X}(D)$ we get two induced pseudoforest subgraphs $H_x^w \subset G_w$ and $H_x^b \subset G_b$ composed by concentric pseudotrees alternating in colour.

Consider the subgraph $\overline{G}_b$ of $G_b$ composed of the edges in the innermost black cycle --that is, the unique black pseudotree that surrounds the unique black tree-- and all of the vertices and edges within.

In other words, $\overline{G}_b$ is the subgraph of $G_b$ bounded by the unique black cycle, in the disc containing the internal black trees. Let $\overline{H}_x^b$ denote the intersection of $\overline{G}_b$ with the pseudoforest $H_x^b$. Note that $\overline{H}_x^b$ is spanning in $\overline{G}_b$, and has exactly two connected components: one is a pseudotree (containing only internal trees), and the other is the isolated black tree. 

A finite $2$-connected graph $G$ embedded in the plane, divides $\R^2$ in regions; we call the edges of G in the boundary of the external infinite region its \emph{boundary cycle}. With the assumptions above, this is in fact a simple cycle in $G$ whose removal from the plane splits it into two discs. 

\begin{definition}
For a given finite, leafless, $2$-connected and planar graph $G$ embedded in $\R^2$, call $C(G) \subset S^2$ its \emph{boundary contraction}, the graph obtained by collapsing the boundary cycle of $G$ to a point.
\end{definition}
\begin{figure}[ht]
  \centering
\includegraphics[width=10cm]{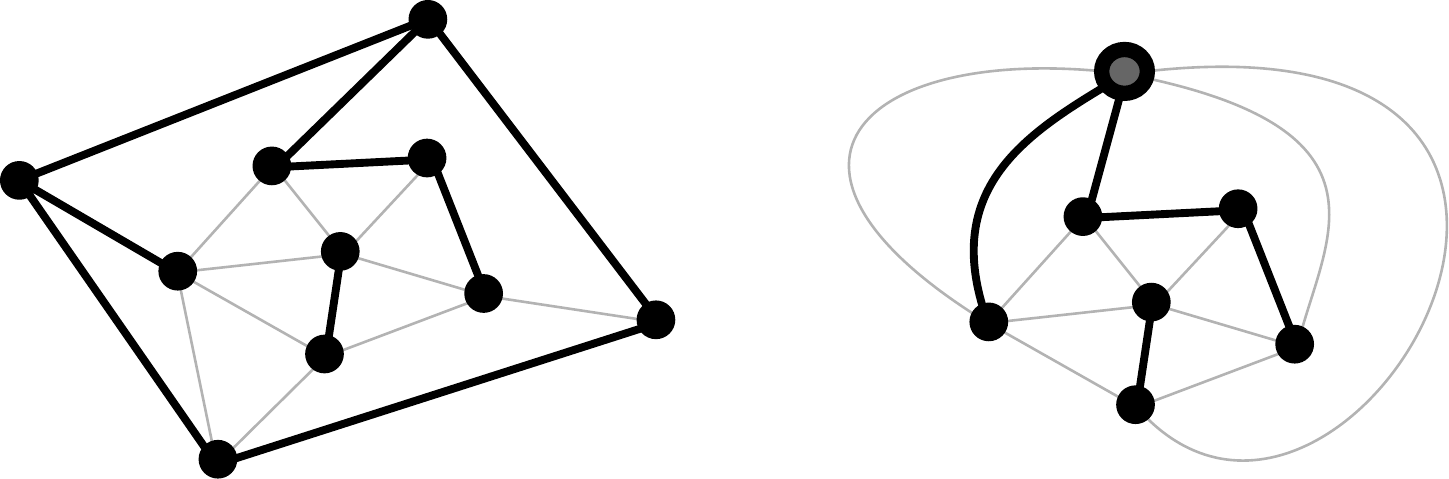}
\caption{On the left one example of $\overline{G}_b$ graph, with $\overline{H}_x^b$ in bold. On the right the collapsed boundary cycle  is the top grey vertex; we can also see how the isolated tree, together with the first black cycle and its internal trees, correspond to an almost-tree in the boundary contraction $C(\overline{G}_b)$.}
\label{fig:COMPACTIFICATION}
\end{figure}

By taking the boundary contraction  we can associate to $\overline{H}_x^b \subset \overline{G}_b$ a spanning almost-tree $C(\overline{H}_x^b)$ in $C(\overline{G}_b)$, with one connected component consisting of the image of all the internal trees, and the other one of the image of the isolated tree.

\begin{proof}[Proof of Lemma~\ref{lem:connectedjordan}]
Let us denote by $x$ a perfect admissible matching with $|J(x)| = 2k+1$, since the number of components in the Jordan resolution is necessarily odd by Proposition~\ref{prop:admissibleodd}. If $k = 0$ there is only one connected component, and we are done.

We want to show the existence of a sequence of clock moves and click path\slash loop moves that take $x$ to a new perfect admissible matching $x^\prime$ with $|J(x^\prime)| = 1$. We do this by induction on $k$. The strategy is to reduce to a configuration akin to the left side of the third case in Figure \ref{fig:casesclock}, so that we can apply one final clock move in order to reduce the number of Jordan cycles by $2$. The result then follows by the inductive step.

This can be achieved by suitably changing the subgraph $\overline{G}_b$ of the black graph, defined above. 

The idea is the following: if we can make the two innermost cycles (\emph{i.e.}~the black cycle and the white cycle contained in $H^w_x$ which separates the black isolated tree from the black cycle) as close as possible (see the left of Figure~\ref{fig:bothoncycle}), then --up to possibly a click loop move on either cycle-- we can apply a clock move that merges the innermost Jordan cycles. Applying a click loop move ensures that there is at least one square in $\Gamma(D)$ with opposite edges of a square being matched, which makes applying a clock move possible.

We start by modifying the isolated black tree, by adding to it all the edges belonging the internal trees of the first pseudoforest $\overline{H}_x^b$. We can do this using clock moves and possibly click path moves thanks to Proposition~\ref{prop:connectedgraph}. That is, we apply a sequence of leaf spins to move the internal trees of $\overline{H}_x^b$ to the isolated black tree. Leaf spins correspond to sequences of clock moves and changing the roots corresponds to click path moves.

Before we can conclude, we need to eliminate some local configurations that would prevent our strategy from succeeding; in particular, after spinning all internal black trees onto the unique isolated black tree as described in the previous paragraph, there remain configurations that prevent the application of a single clock move to reduce the number of Jordan cycles by 2, as illustrated in Figure~\ref{fig:casesclock}. After applying the procedure described in the previous paragraph, all external white trees are in fact paths. This is because we ``spun away'' all black internal trees in $\overline{H}_x^b$. A configuration that would prevent applying the desired final Type III clock move is the presence of an external white tree surrounded by the unique black cycle, as illustrated in the centre of Figure~\ref{fig:bothoncycle}. The problem here is that these external linear white trees separate the innermost white cycle from the innermost black cycle, preventing reaching the desired configuration (see the left part of Figure~\ref{fig:bothoncycle}) where we can apply the final clock move. In particular, this occurs if there exists an edge in $\overline{G}_b \setminus \overline{H}_x^b$ with both vertices on the innermost black cycle (see the grey dashed edge in the central part of Figure~\ref{fig:bothoncycle}). 

We can overcome this problem by applying a sequence of leaf spins on these external linear white trees, which can be seen in the central and right parts of Figure~\ref{fig:bothoncycle}. Note that these leaf spins have the effect of ``shrinking'' the innermost black cycle. 

\begin{figure}[ht]
  \centering
\centering
\includegraphics[width =12cm]{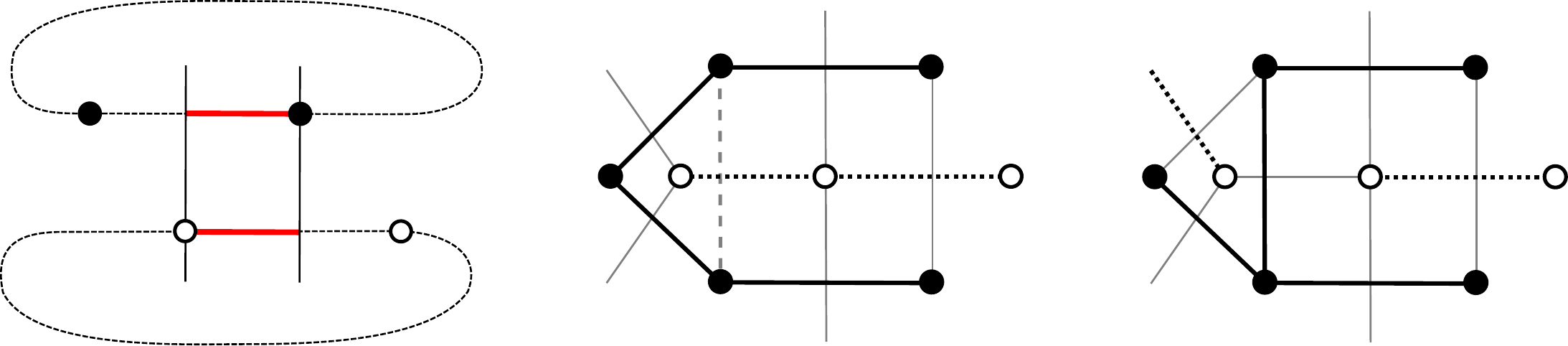}
\caption{On the left: two monochromatic loops (dotted) sharing two opposite edges of a square --up to a click loop move on either one-- lead to a component-reducing clock move of Type III. This is the desired end-state after applying all of the required leaf spins. In the centre: the local configuration that prevents reaching the desired configuration on the left. In particular, the presence of the dashed grey edge causes the problem. We can solve this problem by applying leaf spins in the white graph; the leaves that have to be spun away in the white graph are dotted. The right part shows the effect on the black cycle of spinning away a white leaf.}
\label{fig:bothoncycle}
\end{figure}

Finally, the first black and white cycles are adjacent (as in the left part of Figure \ref{fig:bothoncycle}), and hence we can perform a single clock move to reduce $k$ by one, thus completing the inductive step.
\end{proof}

\begin{remark}
It follows from the proof of Lemma \ref{lem:connectedjordan} that if the perfect admissible matching $x$ has $|J(x)| = 2k+1$, then we can transform it into a dMf by using at most $k$ clock moves of Type III, $k$ click loop moves, and an unspecified number of clock moves of Type I$\slash$II. It is unclear if we can always avoid the use of click path moves.
\end{remark}

\section{Complexes}\label{sec:matchingcplx}

In this section we introduce the matching and Morse complexes -- the objects encapsulating \emph{all} matchings and acyclic matchings, respectively, associated to a given diagram (including both admissible and non-admissible matchings). 

\begin{definition}
The \emph{matching complex of $\Gamma(D)$} is the simplicial complex $\emph{M}(D)$ whose vertices are given by edges in $\Gamma(D)$, and $n$-simplices are matchings with $n+1$ edges.
\end{definition}

\begin{remark}
Alternatively, we can equip the set of partial Kauffman states $\widetilde{X}(D)$ with a simplicial structure as follows: $n+1$ distinct single component partial Kauffman states form an $n$-simplex in $\widetilde{X}(D)$ if and only if their union is still a partial Kauffman state. This is clearly isomorphic to $\text{M}(D)$.
\end{remark}

Another simplicial complex that can be associated to a diagram in this way is the Morse complex of $\Gamma(D)$, first introduced by Chari and Joswig in~\cite{chari2005complexes} for general simplicial complexes. 

\begin{definition}
The \emph{(discrete) Morse complex} $\mathcal{M}(D)$ of $\Gamma(D)$ (or $\Gamma(D,a)$) is the simplicial complex whose vertices are the edges of $\Gamma(D)$ and whose $n$-simplices are spanned by $n+1$ vertices such that the corresponding edges form an acyclic matching in $\Gamma(D)$.
\end{definition}

\begin{remark}
By Remark~\ref{rmk:dmfsubcomplex}, this is a well-defined simplicial complex. In fact, it is clear to see that $\mathcal{M}(D)$ is a subcomplex of $\text{M}(D)$ (similarly for marked diagrams). Note that the matching complex contains both admissible and non-admissible matchings whereas the discrete Morse complex contains only admissible matchings.
\end{remark}

Let $f(D)$ denote the maximal number of arcs in the boundary of a face, maximising over all faces of $S^2 \setminus D$. By \cite[Thm.~3.7]{scoville2020higher}, $\mathcal{M}(D)$ and $\text{M}(D)$ are connected --respectively, simply connected-- if  $4cr(D) \ge 
2f(D) +1 $ and $ 4f(D) +1$, where $cr(D)$ is the \emph{crossing number} of $D$.

It is also possible to give more refined lower bounds on the connectivity of $\text{M}(D)$, using~\cite[Prop.~3.1]{scoville2020higher}: $\text{M}(D)$ is at least $\left( \lfloor \frac{4cr(D)-1}{2f(D)}\rfloor -1\right)$-connected. 
As an example, if $D$ is the diagram in Figure~\ref{fig:tait}, then $\text{M}(D)$ is $1$-connected. The connectivity of the matching complex is also related to the existence of non-extendable pKs, that is, pKs that are not perfect, yet are not faces of simplices of maximal dimension. It is not hard to exhibit examples of non-extendable pKs of arbitrarily high codimension, as illustrated by Figure~\ref{fig:nonextendable}.
\begin{figure}[ht]
\centering
\includegraphics[width=12cm]{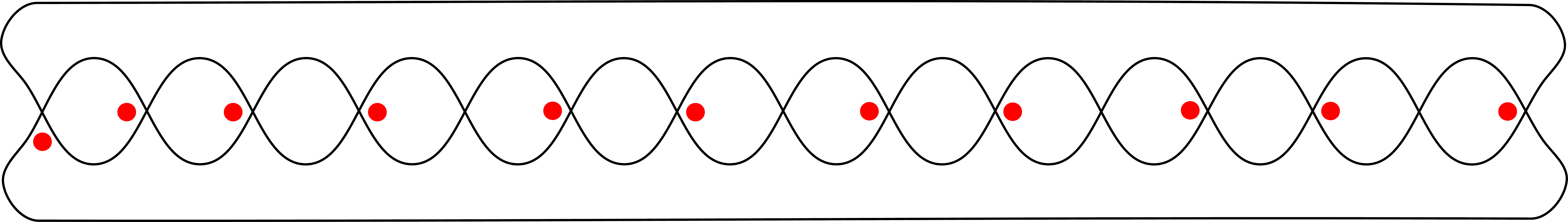}
\caption{An example of an admissible and non-extendable pKs on the diagram of the $T_{15,2}$ torus knot.}
\label{fig:nonextendable}
\end{figure}

Let $\widetilde{X}_{pure}(D)$ denote the set of maximal pKs on the projection $D$, and use the same subscript for the sub-simplicial complexes of $\text{M}(D)$ and $\mathcal{M}(D)$ generated by the top-dimensional simplices.

The pure matching complex of a graph is the union of the pure parts of the matching complexes of all its spanning trees~\cite[Thm.~8]{ayala2008note}. We can prove an analogous result, generalising from graphs to the cellular structures on $S^2$ obtained from Cohen's construction. Denote by $(T,v_b)$ a spanning tree $T \subseteq G_b$ rooted at $v_b$. Recall that a rooted spanning tree in $G_b$ along with a root in $G_w$ uniquely determines a Kauffman state, and hence a perfect acyclic matching in $\Gamma(D)$.

Let $\Delta(T,v_b,v_w)$ denote the simplex in $\mathcal{M}(D)$ spanned by the edges in $\Gamma(D)$ determined by $T$, $v_b$ and $v_w$. Of course, $\Delta(T^\ast,v_w,v_b)=\Delta(T,v_b,v_w)$ since $T$ uniquely determines $T^\ast$ and vice-versa.

\begin{lemma}\label{lem:purepart}
Let $D$ be a diagram with $n+1$ crossings. The simplicial complex $\mathcal{M}_{pure}(D)$ of perfect discrete Morse functions on $\Xi(D)$ is generated by the following set of maximal-dimensional simplices
$$\bigcup_{\substack{T \in Tree(G_b)\\v_w \in V(G_w)\\v_b \in V(G_b)}} \Delta(T,v_b,v_w),$$
where each simplex is of dimension $n$.
\end{lemma}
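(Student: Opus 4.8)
The plan is to show that the facets (maximal-dimensional simplices) of $\mathcal{M}_{pure}(D)$ are \emph{exactly} the simplices $\Delta(T,v_b,v_w)$, by a double inclusion; everything rests on Theorem~\ref{thm:dmfforest}. First I would record the dimension bookkeeping: a matching on $\Gamma(D)$ uses at most one edge at each of the $n+1$ crossings, so every simplex of $\text{M}(D)$, and a fortiori of $\mathcal{M}(D)$, has dimension $\le n$, with dimension exactly $n$ precisely when the underlying pKs is maximal. Since maximal pKs inducing dMfs exist (Proposition~\ref{prop:perfectdMf}), $\dim\mathcal{M}(D)=n$ and $\mathcal{M}_{pure}(D)$ is the subcomplex generated by its $n$-simplices, which are exactly the perfect discrete Morse functions on $\Xi(D)$.

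For the inclusion ``$\supseteq$'' I would fix $T\in Tree(G_b)$ together with roots $v_b\in V(G_b)$ and $v_w\in V(G_w)$. Then $(T,v_b)$ and $(T^\perp,v_w)$ form a pair of rooted orthogonal spanning trees, so by (the converse direction of) Theorem~\ref{thm:dmfforest} they determine a perfect discrete Morse function on $\Xi(D)$, i.e.~an acyclic maximal matching on $\Gamma(D)$; by construction the simplex it spans is exactly $\Delta(T,v_b,v_w)$, and by the paragraph above this is an $n$-simplex of $\mathcal{M}(D)$, hence a facet of $\mathcal{M}_{pure}(D)$.

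For the reverse inclusion I would take an arbitrary $n$-simplex $\sigma$ of $\mathcal{M}(D)$, corresponding to a maximal pKs $x$ that induces a dMf, and first argue that $x$ is automatically admissible. By an Euler-characteristic count $|V(G_b)|+|V(G_w)|=(n+1)+2$, so a maximal $x$ leaves exactly two regions unmatched; if these were both, say, white, then all $|V(G_b)|$ black regions would be matched, forcing the subgraph $H_x^b\subseteq G_b$ — a forest by Theorem~\ref{thm:dmfandstates} together with Theorem~\ref{thm:dmfforest} — to carry $|V(G_b)|$ edges, impossible for an acyclic graph on $|V(G_b)|$ vertices; the both-black case is symmetric. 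Hence $x$ is admissible, so the ``in particular'' clause of Theorem~\ref{thm:dmfforest} applies: $x$ corresponds to the rooted orthogonal spanning trees $(H_x^b,v_b)$ and $((H_x^b)^\perp,v_w)=(H_x^w,v_w)$, where $v_b$ and $v_w$ are its unique unmatched black and white regions. Unwinding the definition of $\Delta(T,v_b,v_w)$ then gives $\sigma=\Delta(H_x^b,v_b,v_w)$. The two inclusions prove the lemma; the bijectivity part of Theorem~\ref{thm:dmfforest} additionally shows that distinct triples $(T,v_b,v_w)$ give distinct facets, so the displayed union is irredundant. The only step that is not pure bookkeeping is the automatic admissibility of a maximal dMf, and even that collapses to the edge count above, so I expect no genuine obstacle beyond correctly matching up the identifications supplied by Theorem~\ref{thm:dmfforest}.
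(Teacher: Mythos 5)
Your proof is correct and takes essentially the same route as the paper, whose one-line argument simply invokes the bijection of Theorem~\ref{thm:dmfforest} (see also Corollary~\ref{cor:alldmfsfromKPW}) between triples $(T,v_b,v_w)$ and perfect dMfs to identify the facets of $\mathcal{M}_{pure}(D)$ with the simplices $\Delta(T,v_b,v_w)$. Your only addition is the edge-count argument showing that a maximal acyclic matching is automatically admissible (hence perfect), a point the paper leaves implicit.
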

\begin{proof}
Since each pair $(T,v_b,v_w)$ uniquely identifies a perfect dMf on $\Xi(D)$ (see also \emph{e.g.} Cor. \ref{cor:alldmfsfromKPW}), we get a bijection between the pure part of the Morse complex and the set of simplices above.
\end{proof}

Theorem~\ref{thm:clickclock} gives us insight into the Morse and matching complexes; for example, two perfect dMfs with the same critical cells can differ only by clock moves. At best, they can differ by exactly one clock move, in which case their corresponding simplices in $\mathcal{M}(D)$ are attached along an $n-2$ dimensional face, where $n$ is the dimension of the simplices. If two perfect dMfs differ by one click path move along a click path of length 2 (see Figure~\ref{fig:distanceone}), then the corresponding simplices in $\mathcal{M}(D)$ are attached along an $n-1$ dimensional face. This is unlike the case where we are studying the Morse and matching complexes of graphs (\emph{i.e.}~1-dimensional simplicial complexes), where maximal-dimensional simplices are connected along unions of $(n-2)$-dimensional faces~\cite{ayala2008note}.

\begin{figure}[ht]
  \centering
\includegraphics[width=7cm]{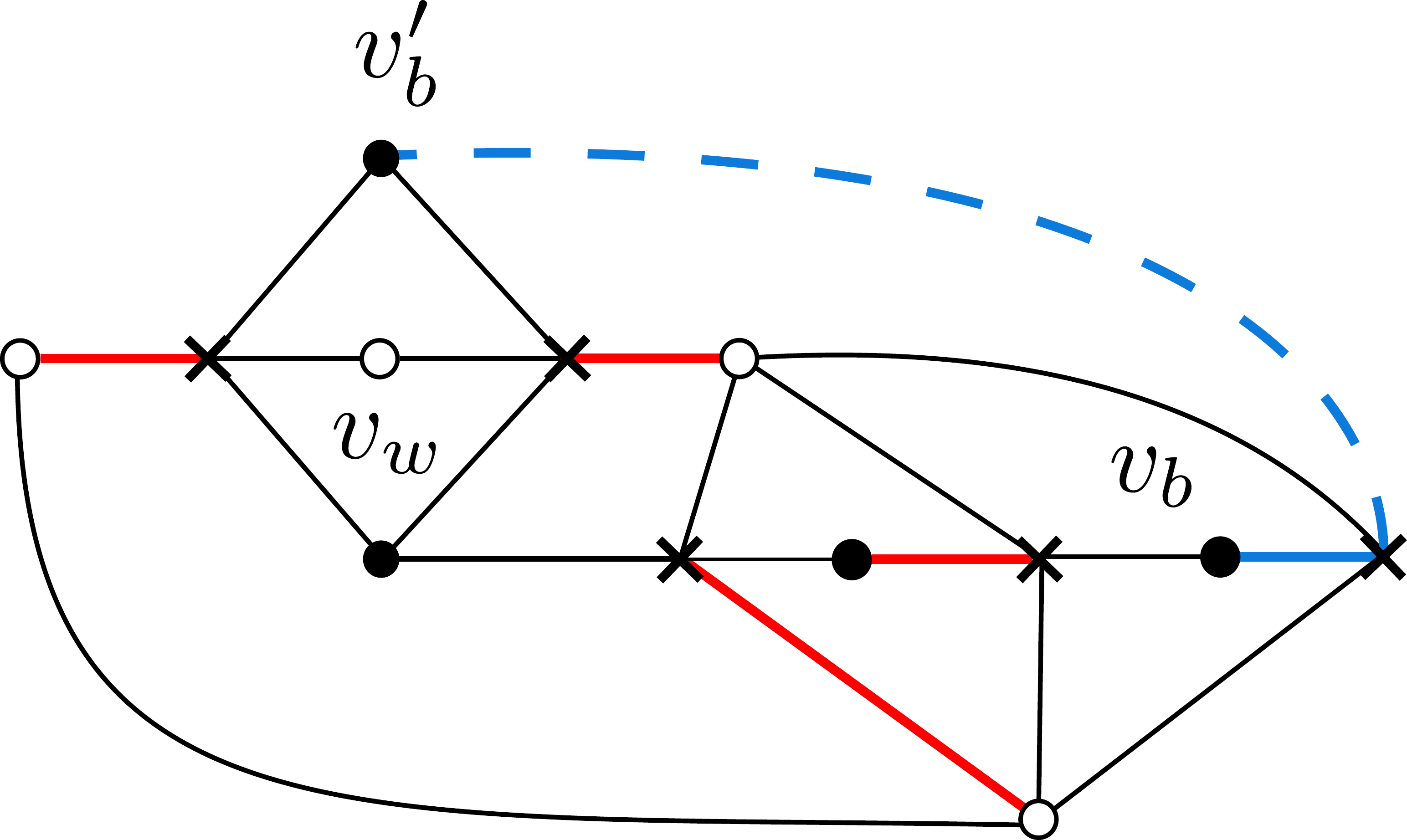}
\caption{The red edges together with the dashed blue one give a perfect dMf with $(v_w, v_b)$ as forbidden regions, while the red edges with the solid blue one give a perfect dMf with forbidden regions  $(v_w,v_b^\prime)$. The union of the two blue edges gives the click path of length 1 between the two. Since the dMfs differ on only one edge, their simplices in $\mathcal{M}_{pure}(D)$ are attached along the $(n-1)$-dimensional face spanned by the red edges, where $n=4$ is the dimension of a maximal simplex in $\mathcal{M}(D)$ since any matching consists of at most 5 edges.}
\label{fig:distanceone}
\end{figure}

Whilst it remains a hard problem to fully understand these complexes (even in just the pure case) the three moves between matchings do reveal some structure. We do point out however that complete knowledge of these moves in $\mathcal{M}(D)$ or $\text{M}(D)$ is not sufficient to determine their homotopy-type(s), indicating that the complexes themselves carry more information than just the set of moves. To see this more concretely, consider the clock graph, which captures all top-dimensional simplices and moves between them for a fixed pair of adjacent critical cells. This graph is not even enough to determine the homotopy-type of the pure Morse complex for adjacent regions, despite being one of the nicest cases dealt with so far, as can be appreciated with the example in Figure~\ref{fig:examplefig8}.

\begin{figure}[ht]
  \centering
\includegraphics[width=9cm]{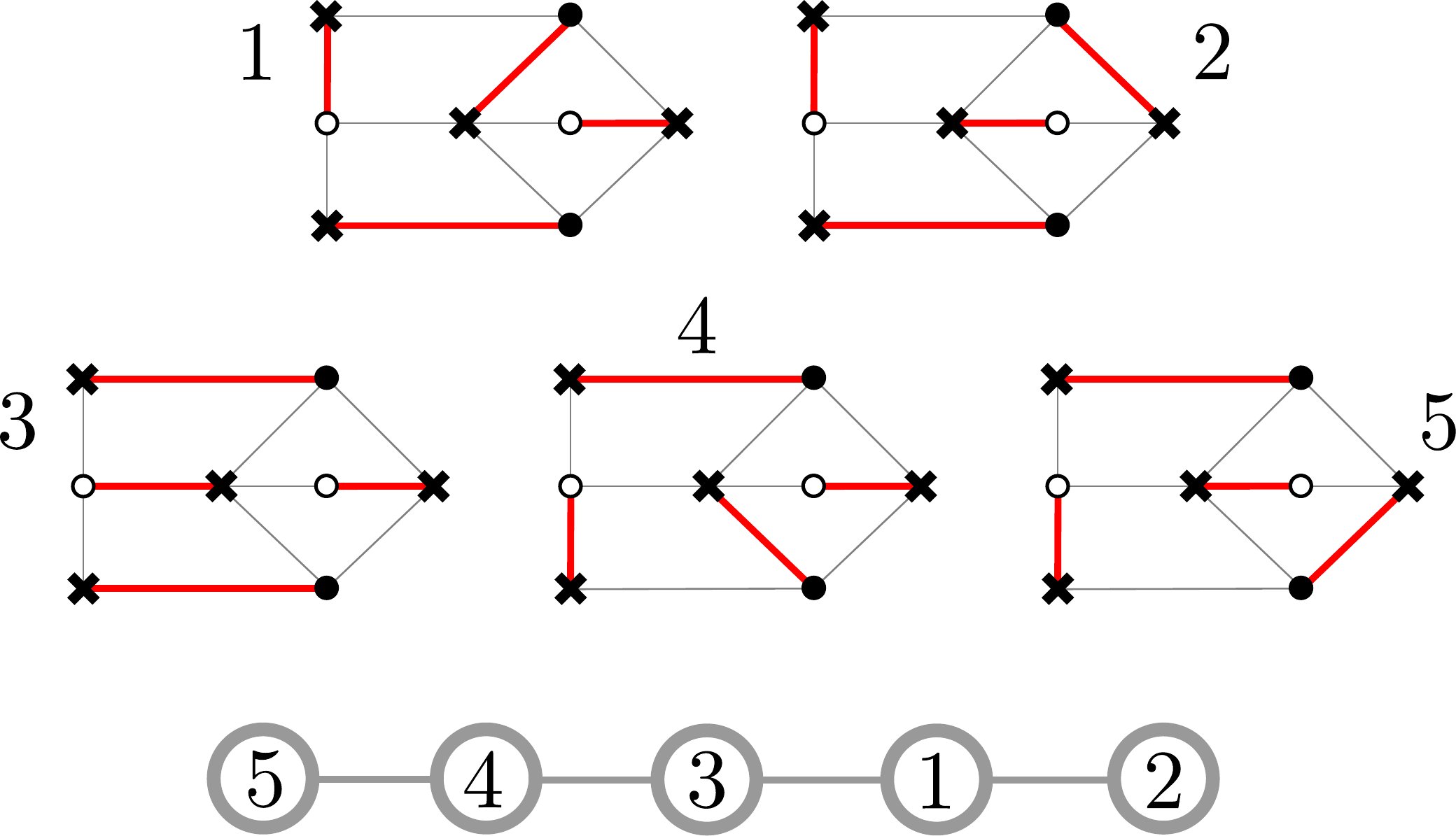}
\caption{These are the $5$ Kauffman states on the projection of the Figure eight knot in Figure~\ref{fig:tait}, with the top-left (red) arc $a$ chosen as the forbidden one. The Clock graph shown below in grey is clearly linear, but it is easy to compute that $\mathcal{M}_{pure}(D,a)$ is a $3$-dimensional simplicial complex  homotopic to a circle.}
\label{fig:examplefig8}
\end{figure}
\bigskip

Before providing the some sample computations of the homology of $\text{M}(D)$ and $\mathcal{M}(D)$ for small diagrams, we make a few remarks;
first we want to highlight a striking difference between dMfs obtained through Cohen's construction, general dMfs and ``regular'' Morse functions; for concreteness let us restrict to the case of a triangulated smooth closed manifold $X$. 

In this case, the matching complex of (the poset graph of) $X$ can be thought of as a discrete analogue of the space of equivalence classes of gradient vector fields of smooth functions on $X$, and $\mathcal{M}(\mathcal{H}(X))$ as the subset of these gradient fields coming from functions that are Morse \cite{benedetti2012smoothing}. It is well-known that smooth Morse functions are dense in the set of smooth functions on $X$, and that ``small'' isotopies are enough to transform any smooth function on $X$ into a Morse one. In the discrete setting, this is no longer true; given a matching, it is generally very hard (see \emph{e.g.}~\cite{nicolaescu2012combinatorial}, the discussion~\cite{86193}, and the end of \cite[Sec. 2]{chari2005complexes}) to find its minimal ``distance'' to a perfect dMf. 

Of course, given any matching one can always obtain a dMf from it by simply removing one edge from each of its monochromatic loops. However, if we restrict to a perfect and admissible matching $x$, it is possible to use the Click-Clock theorem to show that the minimal distance between $x$ and a \emph{perfect} dMf is bounded below by the minimal number of clock and click path\slash loop moves needed to transform $x$ in a dMf, and the proof of the theorem provides us with an almost explicit algorithm that allows us to perform such a transformation. \bigskip

Finally, it would be interesting to tie some of the homological properties of $\text{M}(D)$ and $\mathcal{M}(D)$ to other known graph and knot theoretic invariants and quantities. There is much structure in these simplicial complexes that can be exploited in several ways.
\bigskip

We list here the (reduced) homology of the matching and discrete Morse complexes for all minimal knot diagrams with up to $7$ crossings. The computations are performed using a Sage~\cite{sagemath} program, available upon request.\bigskip

\newpage
\bgroup
\def\arraystretch{2}
\begin{table}[ht]
\begin{center}
\begin{tabular}{| c | Sc || Sc | Sc || Sc | Sc|}
\hline   
    \textbf{Name} &
    \textbf{Minimal Projection} & $\bf{\mathcal{M}(D)}$ &   $\bf{\text{M}(D)}$ &
    $\bf{\mathcal{M}_{pure}(D)}$ &
    $\bf{\text{M}_{pure}(D)}$ \\
\hline 
$3_1$ & \cincludegraphics[scale=0.1] {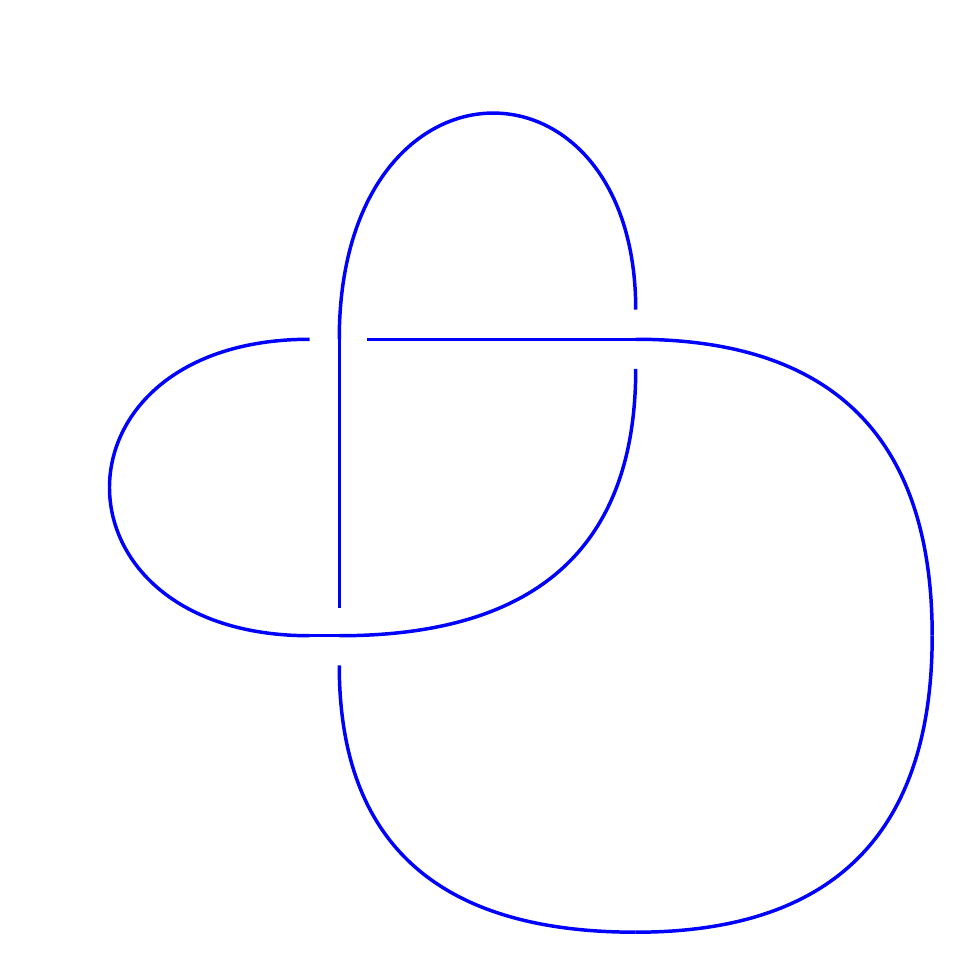}  & $\homol{1}{4}$ & $\homol{2}{4}$ & $\homol{1}{4}$ & $\homol{2}{4}$\\
\hline

$4_1$ & \cincludegraphics[scale=0.1] {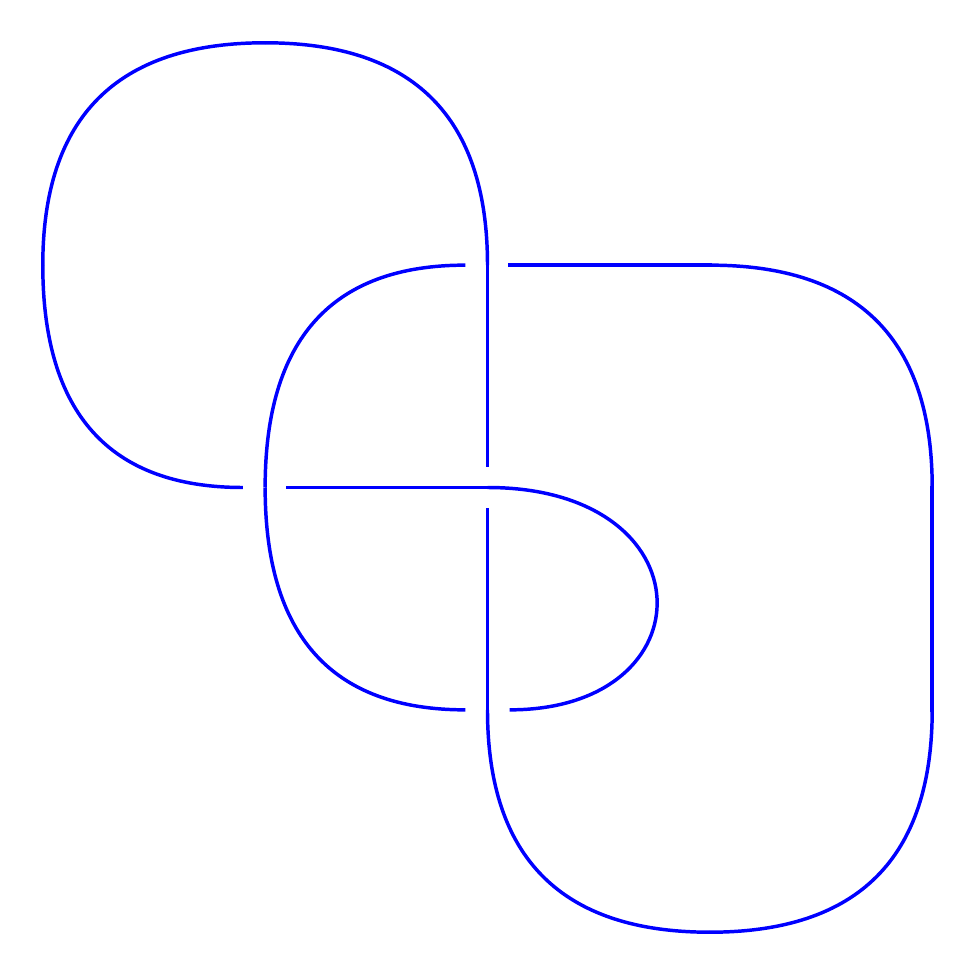} & $\homol{2}{12}$ & $\homol{2}{5} \oplus \Z_{(3)}$ & $\homol{2}{12}$ & $\homol{2}{5} \oplus \Z_{(3)}$\\
\hline

$5_1$ & \cincludegraphics[scale=0.1] {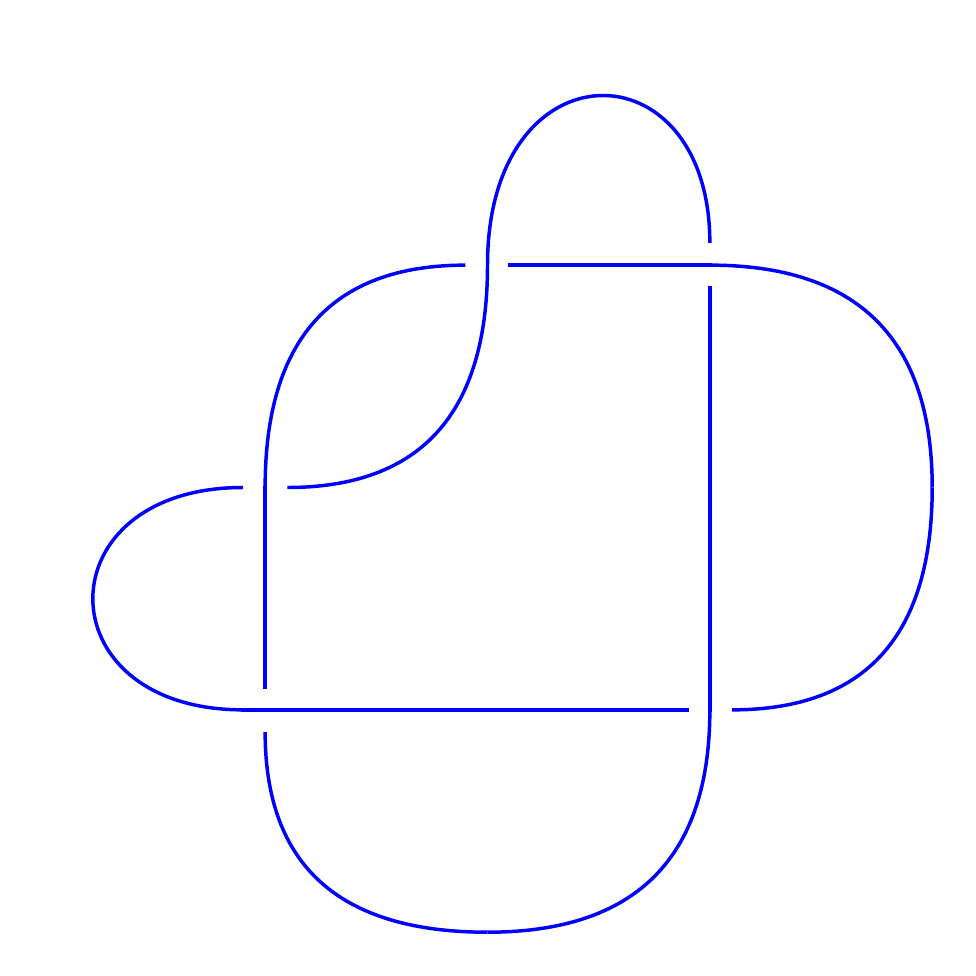} & $\Z_{(2)}\oplus \homol{3}{2}$ & $\homol{3}{19}$ & $\Z_{(2)}\oplus \homol{3}{2}$ & $\homol{3}{9}$\\
\hline

$5_2$ & \cincludegraphics[scale=0.1] {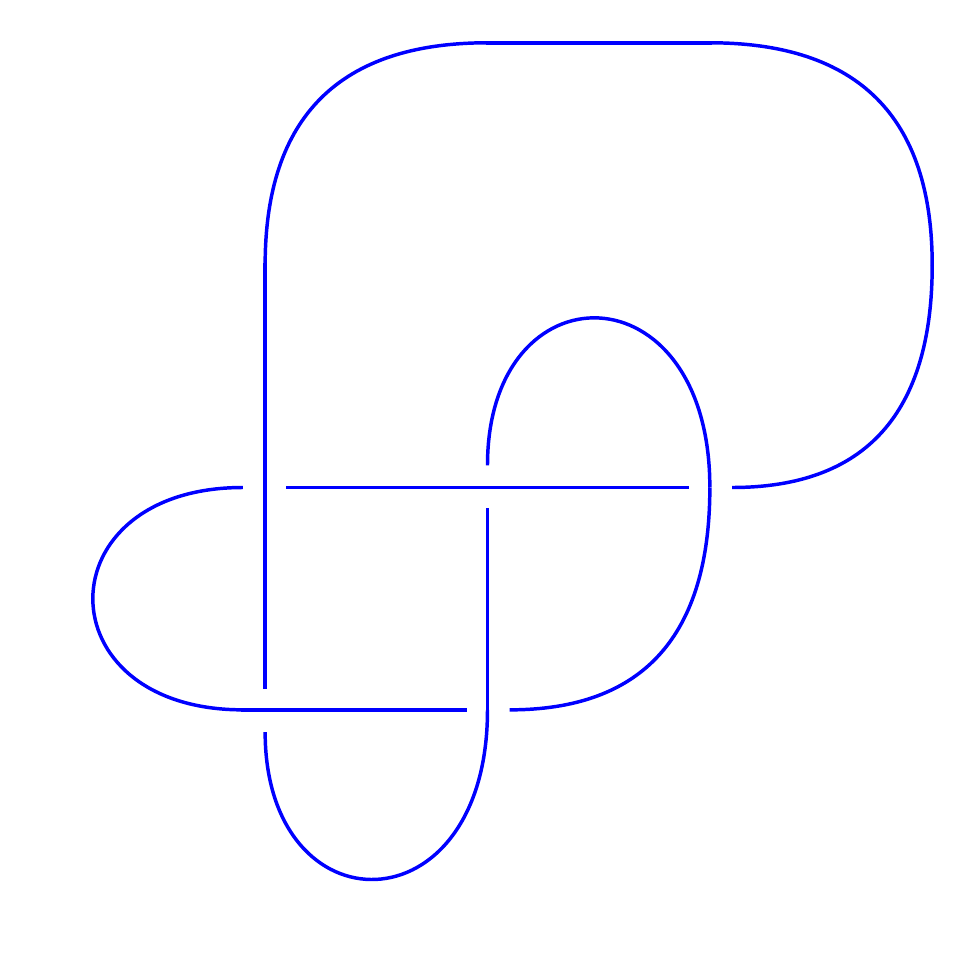} & $\homol{3}{6}$ & $\homol{3}{20}$ & $\homol{3}{6}$ & $\homol{3}{13}$\\
\hline

$6_1$ & \cincludegraphics[scale=0.1] {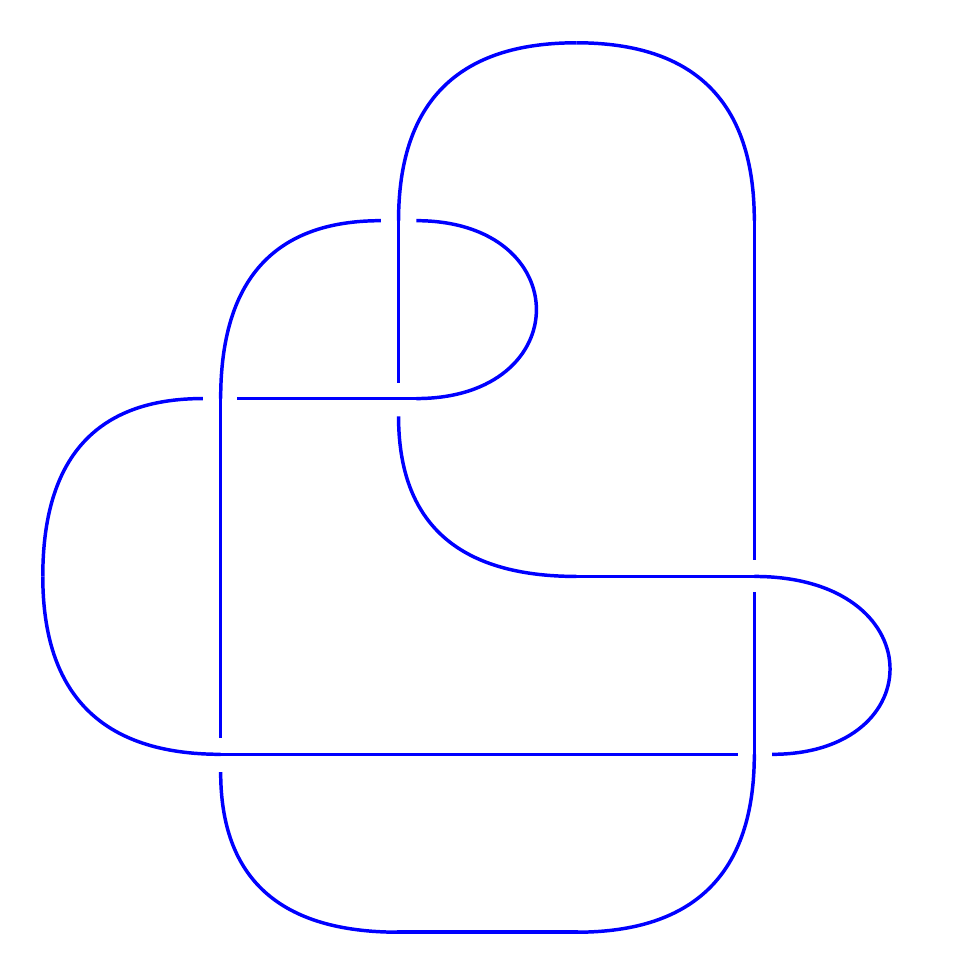} & $\homol{3}{6} \oplus \homol{4}{4}$ & $\homol{4}{28}$ & $\homol{3}{3} \oplus \homol{4}{4}$ & $\homol{3}{2} \oplus \homol{4}{3}$\\
\hline

$6_2$ & \cincludegraphics[scale=0.1] {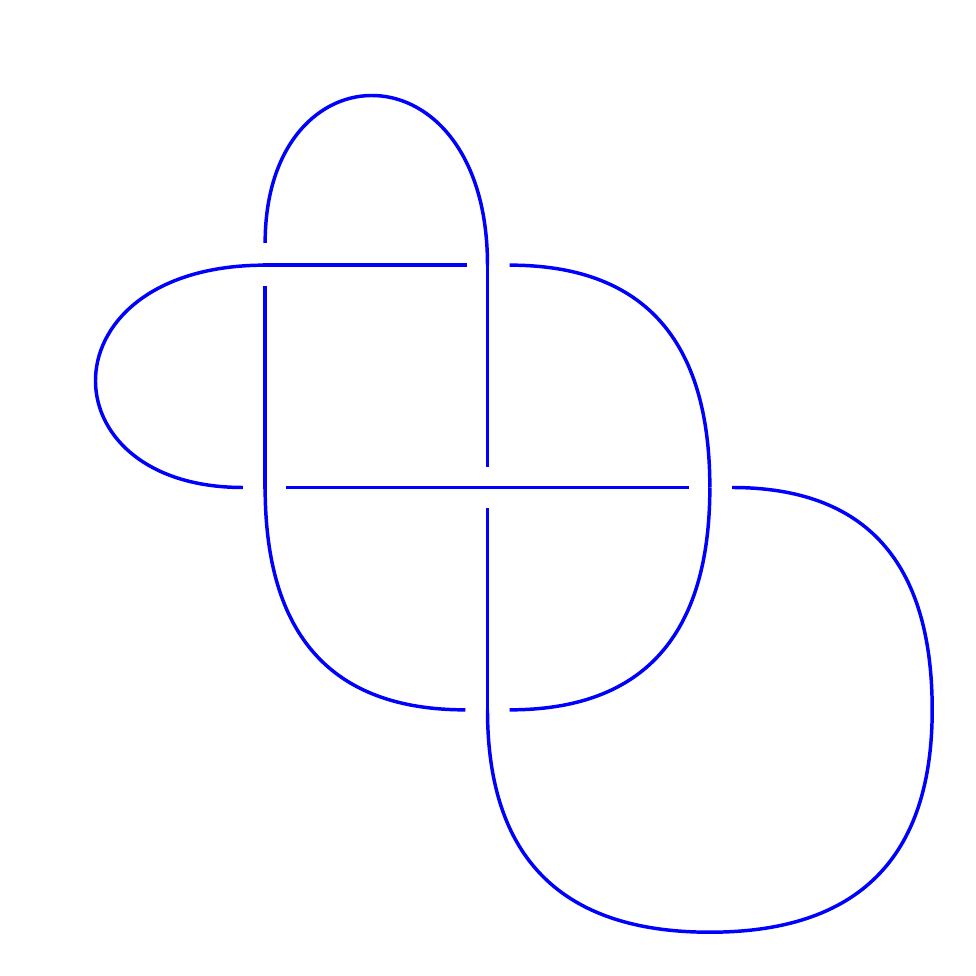} & $\homol{3}{14} \oplus \homol{4}{2}$ & $\homol{4}{30}$ & $\homol{3}{5} \oplus \homol{4}{2}$ & $\homol{3}{2} \oplus \Z_{(4)} $\\
\hline

$6_3$ & \cincludegraphics[scale=0.1] {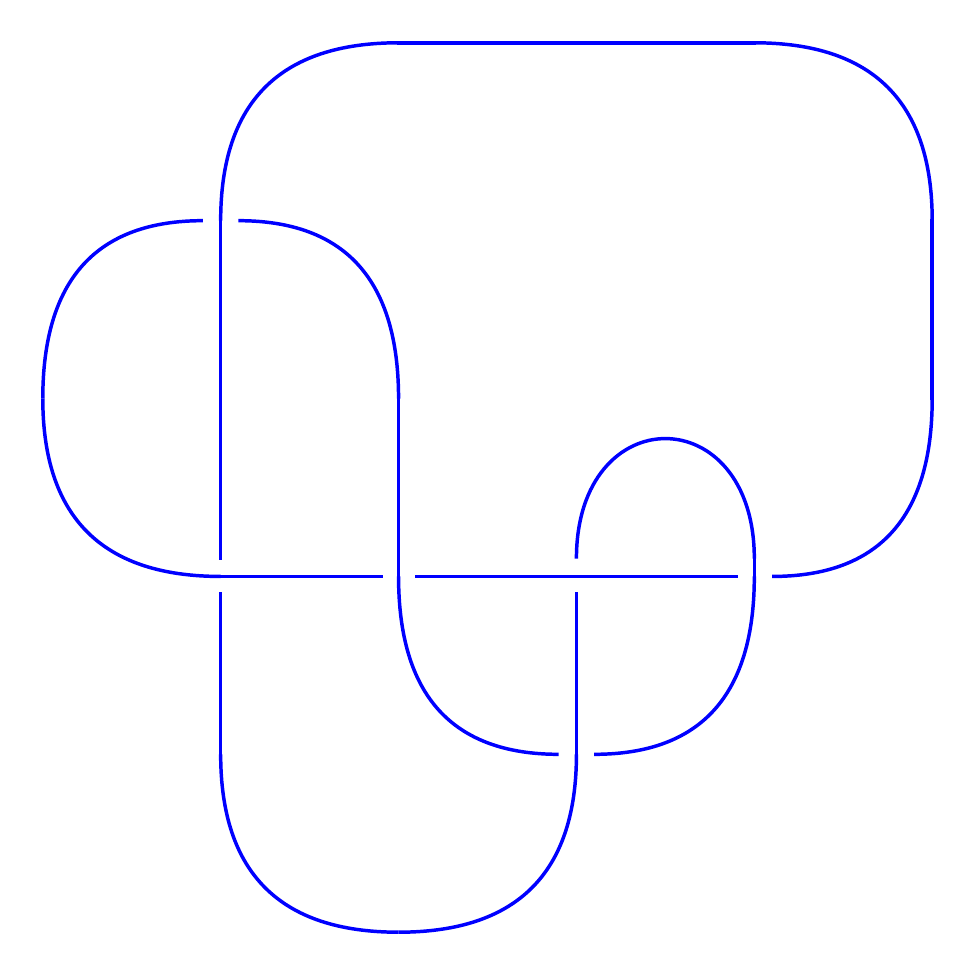} & $\homol{3}{26}$ & $\homol{4}{34}$ & $\homol{3}{32} $ & $\homol{3}{2} \oplus \homol{4}{6}$\\
\hline

$7_1$ & \cincludegraphics[scale=0.1] {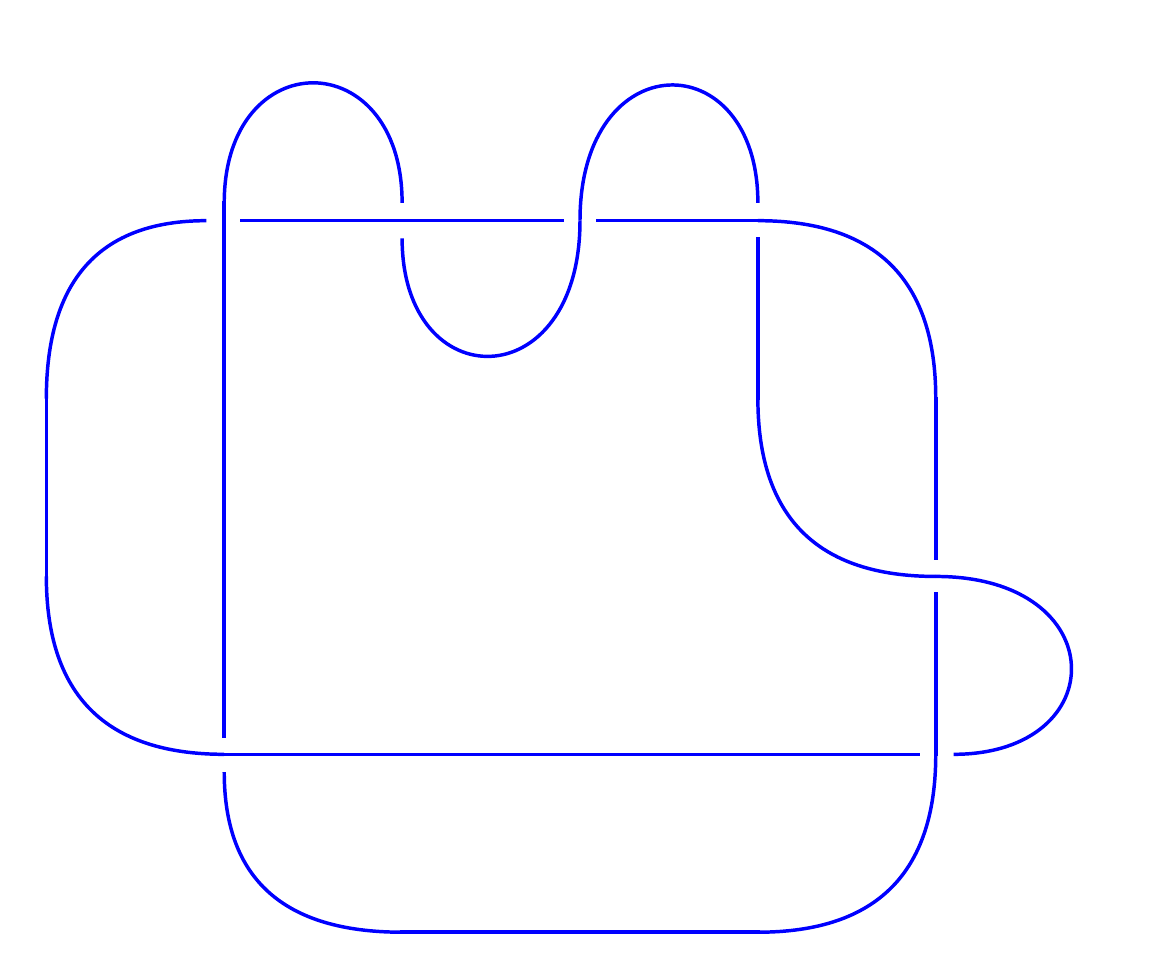} & $\Z_{(2)} \oplus \homol{5}{2}$ & $\homol{4}{2} \oplus \Z_{(5)}$ & $\Z_{(2)} \oplus \homol{5}{2}$ & $\Z_{(4)}$\\
\hline

$7_2$ & \cincludegraphics[scale=0.1] {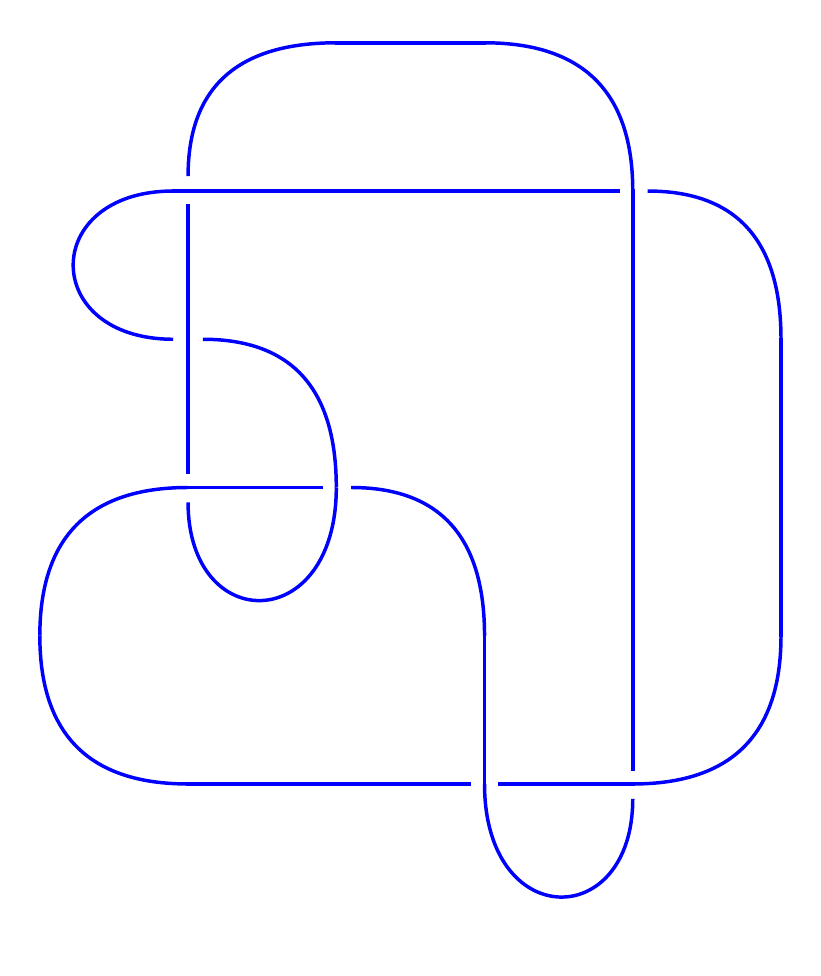} & $\homol{4}{2} \oplus \homol{5}{4}$ & $\homol{4}{2} \oplus \homol{5}{8}$ & $\homol{3}{3} \oplus \homol{5}{4}$ & $\homol{4}{12}$\\
\hline

$7_3$ & \cincludegraphics[scale=0.1] {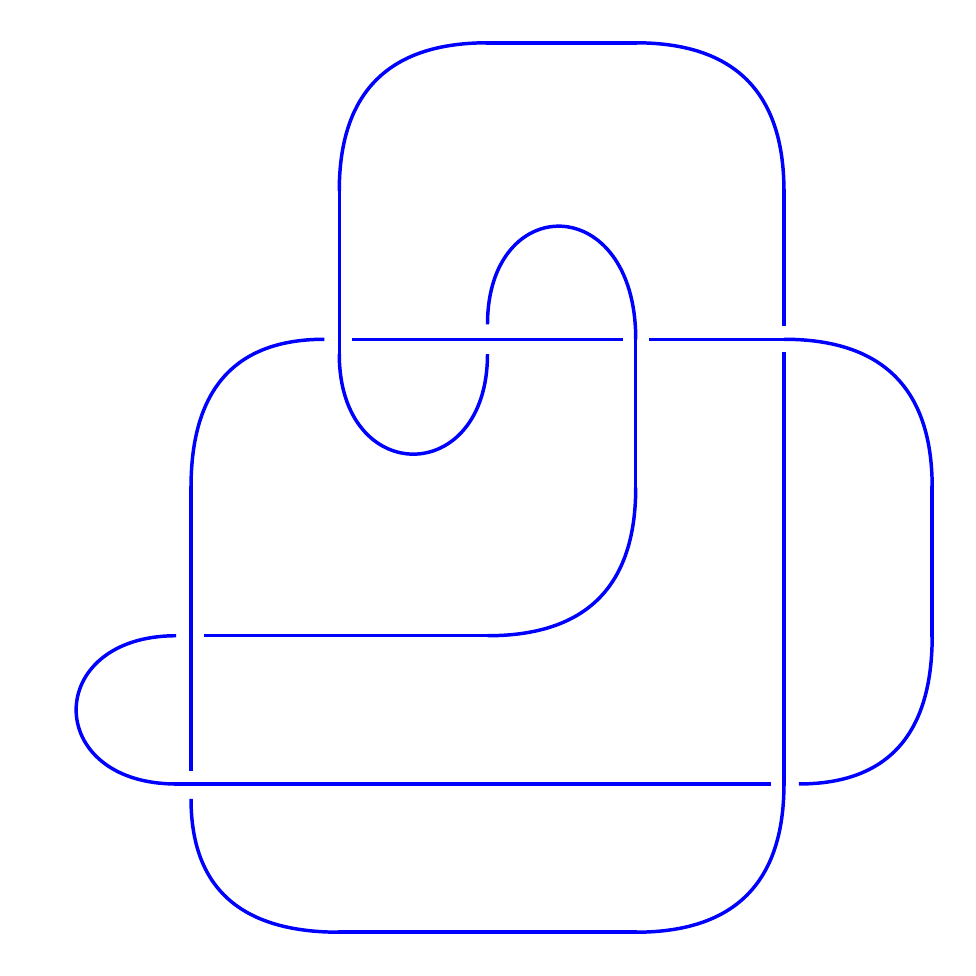} & $\homol{4}{9}$ & $\homol{4}{2} \oplus \homol{5}{3}$ & $\homol{4}{6}$ & $\homol{4}{12}$\\
\hline

$7_4$ & \cincludegraphics[scale=0.1] {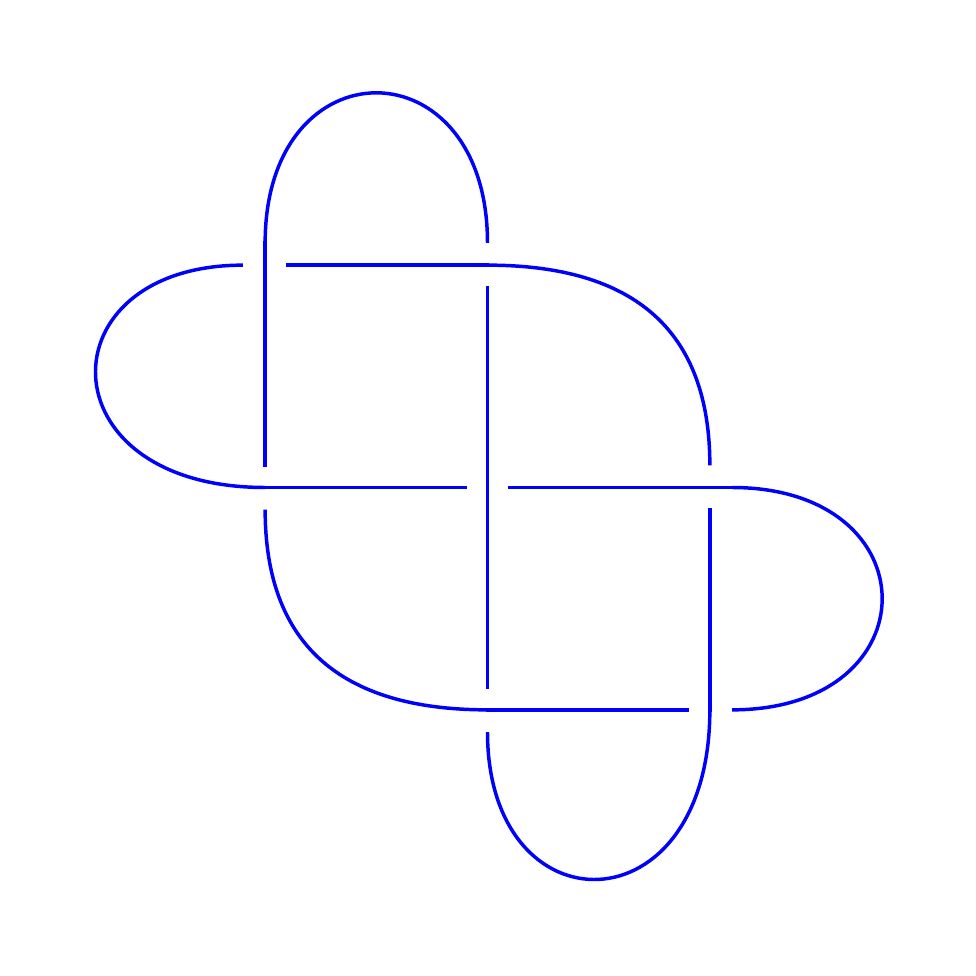} & $\homol{4}{10} \oplus \homol{5}{2}$ & $\homol{4}{2} \oplus \homol{5}{6}$ & $\homol{3}{2} \oplus \homol{5}{2}$ & $\homol{4}{16}$\\
\hline

$7_5$ & \cincludegraphics[scale=0.1] {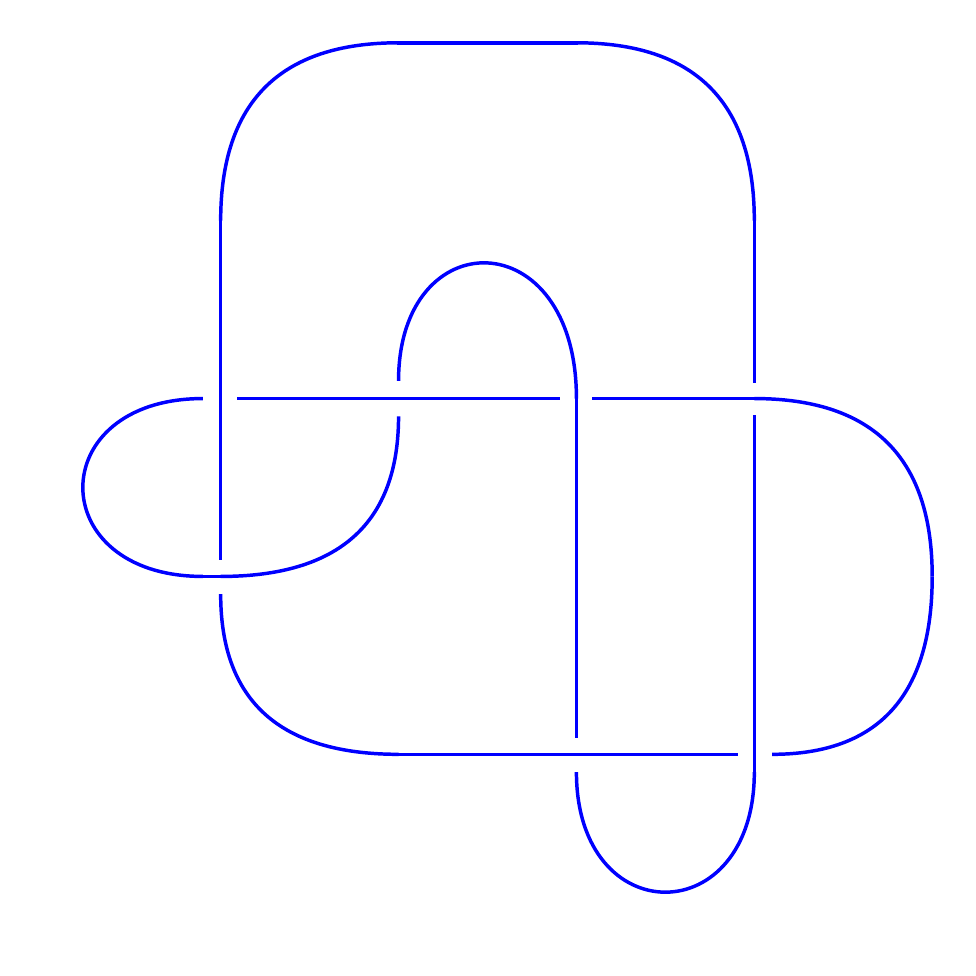} & $\homol{4}{23}$ & $\homol{4}{2} \oplus \homol{5}{9}$ & $\homol{4}{8}$ & $\homol{4}{22}$\\
\hline

$7_6$ & \cincludegraphics[scale=0.1] {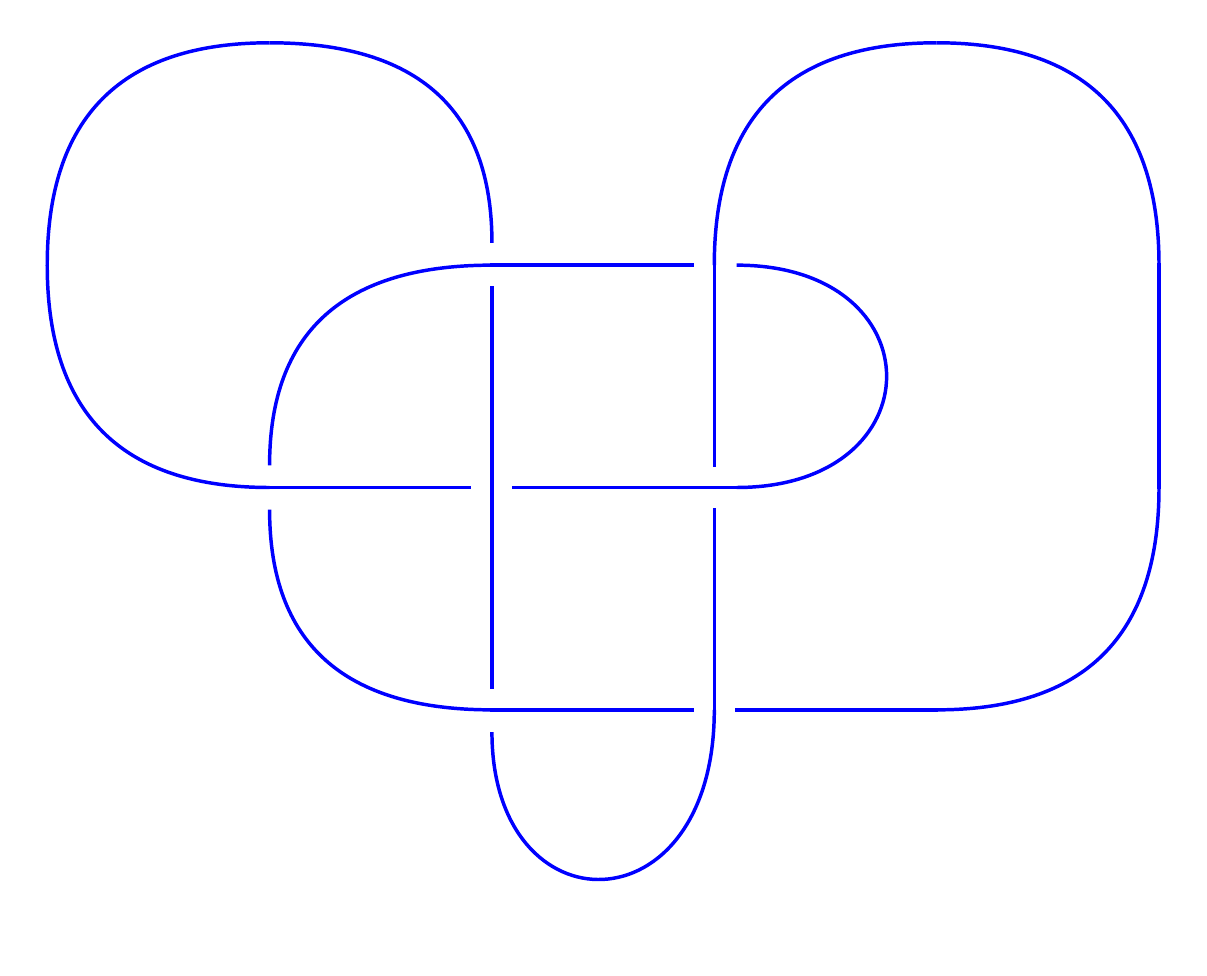} & $\homol{4}{43}$ & $\homol{4}{2} \oplus \homol{5}{14}$ & $\homol{3}{4} \oplus \homol{4}{12}$ & $\homol{4}{26}$\\
\hline

$7_7$ & \cincludegraphics[scale=0.1] {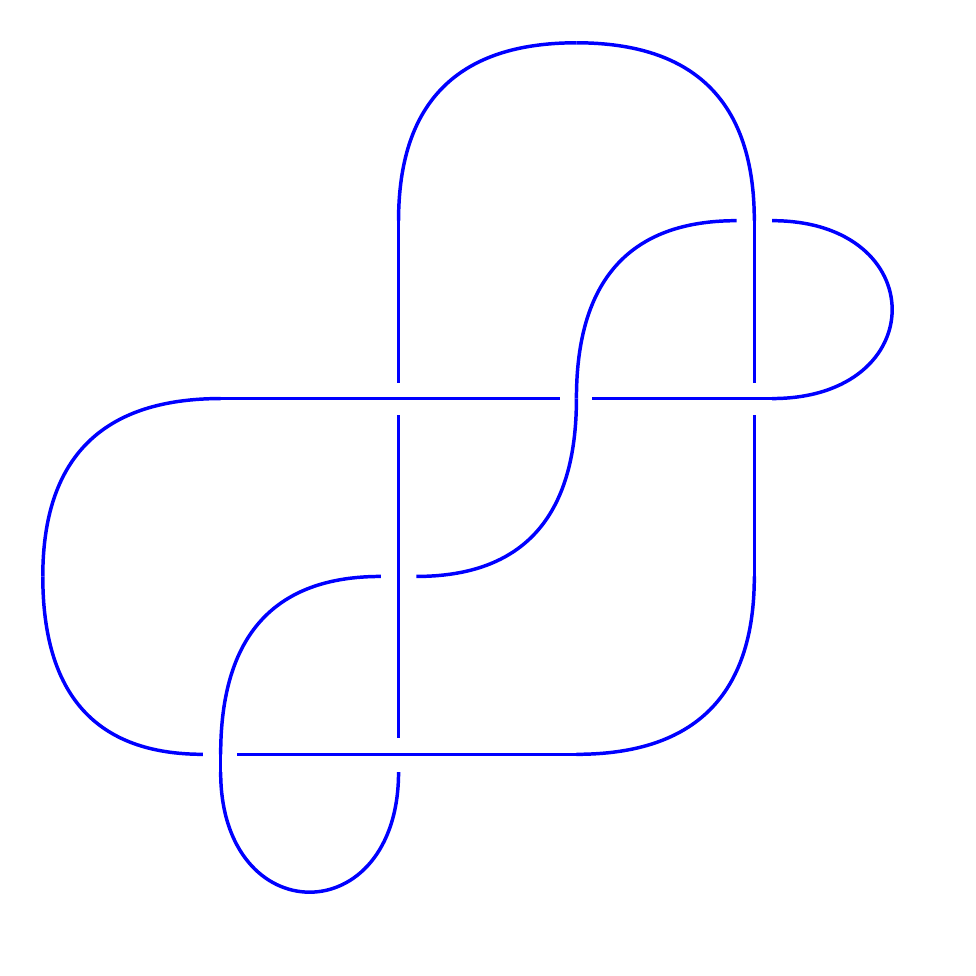} & $\homol{4}{50}$ & $\homol{4}{2} \oplus \homol{5}{14}$ & $\homol{3}{9} \oplus \homol{4}{8}$ & $\homol{4}{30}$\\
\hline
\end{tabular}
\end{center}
\caption{The homology of the various simplicial complexes associated to minimal knot projections, up to $7$ crossings. The notation $\Z^a_{(b)}$ denotes $a$ generators of the homology in degree $b$. The diagrams are generated using SnapPy~\cite{Snappy}.}
\end{table}
\egroup


\end{document}